\documentclass[twoside,a4paper,reqno,11pt]{amsart} 
\usepackage[top=28mm,right=28mm,bottom=28mm,left=28mm]{geometry}

\headheight=14pt
\parskip 1mm

\usepackage{microtype}
\frenchspacing

\usepackage{amsfonts, amsmath, amssymb, amsbsy, mathrsfs, bm, latexsym, stmaryrd, hyperref, array, enumitem, textcomp, url}
\usepackage[table]{xcolor}

\newcommand{\GL}{\operatorname{GL}}

\renewcommand{\a}{\alpha}
\renewcommand{\b}{\beta}

\newcommand{\e}{\epsilon}

\renewcommand{\O}{\Omega}

\newcommand{\la}{\langle}
\newcommand{\ra}{\rangle}
\newcommand{\leqs}{\leqslant}
\newcommand{\geqs}{\geqslant}
\newcommand{\normeq}{\trianglelefteqslant}

\newcommand{\vs}{\vspace{3mm}}

\makeatletter
\newcommand{\imod}[1]{\allowbreak\mkern4mu({\operator@font mod}\,\,#1)}
\makeatother

\theoremstyle{plain}
\newtheorem{theorem}{Theorem} 
 
\newtheorem{corol}[theorem]{Corollary}
\newtheorem{thm}{Theorem}[section] 
\newtheorem{lem}[thm]{Lemma}
\newtheorem{prop}[thm]{Proposition} 

\newtheorem{cor}[thm]{Corollary} 
\newtheorem*{theorem*}{Theorem} 
\newtheorem*{conj*}{Conjecture}

\theoremstyle{definition}
\newtheorem{rem}[thm]{Remark}
\newtheorem{defn}[thm]{Definition}
\newtheorem*{deff}{Definition}

\begin{document}

\title{Strongly base-two groups}
\dedicatory{\rm Dedicated to our friend and colleague Pham Huu Tiep on the occasion of his 60th birthday}

\author{Timothy C. Burness}
\address{T.C. Burness, School of Mathematics, University of Bristol, Bristol BS8 1UG, UK}
\email{t.burness@bristol.ac.uk}

\author{Robert M. Guralnick}
\address{R.M. Guralnick, Department of Mathematics, University of Southern California, Los Angeles, CA 90089-2532, USA}
\email{guralnic@usc.edu}

\date{\today} 

\begin{abstract}
Let $G$ be a finite group, let $H$ be a core-free subgroup and let $b(G,H)$ denote the base size for the action of $G$ on $G/H$. Let $\alpha(G)$ be the number of conjugacy classes of core-free subgroups $H$ of $G$ with $b(G,H) \geqslant 3$. We say that $G$ is a strongly base-two group if $\alpha(G) \leqslant 1$, which means that almost every faithful transitive permutation representation of $G$ has base size $2$. In this paper we study the strongly base-two finite groups with trivial Frattini subgroup.
\end{abstract}

\maketitle

\section{Introduction}\label{s:intro}

Let $G \leqs {\rm Sym}(\O)$ be a transitive permutation group on a finite set $\O$ with point stabilizer $H$. A subset $B$ of $\O$ is a \emph{base} for $G$ if the pointwise stabilizer of $B$ in $G$ is trivial. We write $b(G,H)$ for the \emph{base size} of $G$, which is the minimal size of a base for $G$. Here $H$ is a core-free subgroup of $G$ and $b(G,H)$ is the smallest positive integer $b$ with the property that
\[
\bigcap_{i=1}^b H^{g_i} = 1
\]
for some $g_i \in G$. We say that $G$ is a \emph{base-two} permutation group if $b(G,H) = 2$.

The base size of a finite permutation group is a classical invariant that has been studied for many decades, finding a wide range of applications and connections to a diverse range of problems in group theory and related areas (we refer the reader to the survey articles \cite{BC,LSh3} and \cite[Section 5]{Bur181} for more details). Determining the precise base size of a given group is a difficult problem and there has been an intense focus on establishing strong bounds on the base sizes of primitive groups (here each point stabilizer is a maximal subgroup). In this direction, there have been several major advances in recent years, partly motivated by a circle of highly influential conjectures due to Babai, Cameron, Kantor and Pyber, all of which have now been resolved.

There has also been substantial interest in understanding the base-two primitive groups, which is an ambitious project initiated by Jan Saxl in the 1990s. Although a complete classification remains out of reach, there has been significant progress towards this goal, which is best described in terms of the Aschbacher-O'Nan-Scott division of the finite primitive groups. For example, there are strong results of Fawcett \cite{Faw0,Faw} on diagonal groups and twisted wreath products, and work of several authors  \cite{F1,F2,Lee1,Lee2} on affine groups with quasisimple point stabilizers. The study of base-two product-type groups was recently initiated in \cite{BH2} and there is an expanding literature for almost simple groups. For instance, the base-two almost simple primitive groups with solvable point stabilizers are determined in \cite{B20} and there is a complete classification for the groups with socle an alternating or sporadic simple group (see \cite{BGS,BOW,James}). Some of these results have already found some interesting applications. For example, see \cite{BH_gen} for applications concerning strong $2$-generation properties of almost simple groups and \cite{BL,BTh} on the classification of extremely primitive groups. Base-two results also arise in \cite{BLN} in relation to bounding the diameter of the solvable graph of an almost simple group (here the vertices are labelled by the nontrivial elements of the group, with two vertices adjacent if they generate a solvable group). 

In this paper we introduce and study the following family of groups.

\begin{deff}\label{d:sb2}
Let $G$ be a finite group and let $\a(G)$ be the number of core-free subgroups $H$ of $G$ (up to conjugacy in $G$) such that $b(G,H) \geqs 3$ with respect to the natural action of $G$ on $G/H$. We call $G$ a \emph{strongly base-two} group if $\a(G) \leqs 1$. 
\end{deff}

For example, if $G = \mathbb{M}$ is the Monster sporadic simple group and $H$ is a proper subgroup, then \cite[Theorem 3.1]{B_spor} states that $b(G,H) \leqs 3$, with equality if and only if $H = 2.\mathbb{B}$ is the centralizer of a \texttt{2A} involution. It follows that $\a(G) = 1$ and thus the Monster is a strongly base-two group. It is therefore natural to ask if other simple (or almost simple) groups share this property and we will answer this question in Theorem \ref{t:asb2} below. More generally, our main aim is to study the strongly base-two finite groups with trivial Frattini subgroup. 

\begin{deff}\label{d:large}
Let $G$ be a finite group and let $H$ be a subgroup. We say that $H$ is \emph{large} if $H \ne G$ and $|H|^2 > |G|$.
\end{deff}

As a special case of Proposition \ref{p:large} below, every finite nonabelian simple group $T$ has a large subgroup. As an immediate consequence, we deduce that $\a(T) \geqs 1$ and this partly explains why we adopt the condition $\a(G) \leqs 1$ in the definition of a strongly base-two group, rather than the more restrictive property $\a(G)=0$. This observation also leads us naturally to the following definition, which we will need in order to state some of our main results.

\begin{deff}\label{d:special}
A finite nonabelian simple group $T$ is \emph{special} if it has a unique ${\rm Aut}(T)$-class of large subgroups.
\end{deff}

First we classify the special simple groups and we use this to determine all the strongly base-two almost simple groups. In Table \ref{tab:special}, we also identify a large subgroup $H$ of $T$ (note that $H$ is a Borel subgroup of $T$ in the first three rows of the table).

\begin{theorem}\label{t:special}
Let $T$ be a finite nonabelian simple group. Then $T$ is special if and only if $T$ is one of the groups recorded in Table \ref{tab:special}.
\end{theorem}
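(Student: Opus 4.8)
The plan is to run through the finite simple groups via the Classification, determining in each case the $\Aut(T)$-classes of large subgroups. The starting point is a structural reduction. If $H$ is large and $H \leqs K < T$ then $K$ is large too, so every large subgroup lies in a large maximal subgroup; and if $K < H$ are both large then, having different orders, they lie in distinct $\Aut(T)$-classes. Hence $T$ is special if and only if (i) $T$ has a unique $\Aut(T)$-class of large maximal subgroups, represented by some $M$, and (ii) every maximal subgroup of $M$ has order at most $\sqrt{|T|}$ (so that $M$ has no proper large subgroup). Proposition~\ref{p:large} guarantees that large subgroups exist, and the problem now splits cleanly: first enumerate the large maximal subgroups and test (i), then verify the internal condition (ii) for the surviving candidates.

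For groups of Lie type the key input is that parabolic subgroups are large. If $T$ has twisted Lie rank at least $2$ then it has at least two classes of maximal parabolic subgroups, and even when a graph automorphism fuses them, a maximal parabolic $M = U_M \rtimes L$ violates (ii): the Levi factor $L$ has semisimple rank at least $1$, hence contains a subgroup of small index (for instance a Borel subgroup of one of its Lie-type factors), which pulls back to a large proper subgroup of $M$. This removes all groups of rank at least $2$ outside a bounded list of small cases that I would check by hand. For the rank-$1$ groups $\mathrm{PSL}_2(q)$, $\mathrm{PSU}_3(q)$, $\mathrm{Sz}(q)$ and ${}^2G_2(q)$ the Borel subgroup $B = U \rtimes T_0$ is the unique large maximal subgroup once the dihedral, subfield and exceptional maximal subgroups are seen to have order below $\sqrt{|T|}$ (which holds away from a short list of small or square $q$, treated separately), so (i) holds and everything rests on (ii). Since $U$ has only a bounded number of $T_0$-composition factors, the largest proper subgroup of $B$ is essentially $U \rtimes T_1$ with $|T_0 : T_1| = \ell$ the least prime divisor of $|T_0|$, and (ii) becomes the inequality $|U|\,|T_0|/\ell \leqs \sqrt{|T|}$. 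Unwinding this forces $\ell$ to exceed roughly $\sqrt{|T_0|}$, i.e. $|T_0|$ must be prime (or a prime square); this isolates the infinite families in the Borel rows of Table~\ref{tab:special} and, in particular, rules out $\mathrm{PSU}_3(q)$ and ${}^2G_2(q)$, whose tori have even order and hence $\ell = 2$.

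The alternating groups are handled in the same spirit: for $A_n$ the point stabiliser $A_{n-1}$ and the intransitive subgroup $(S_2 \times S_{n-2}) \cap A_n$ are both large maximal subgroups once $n \geqs 6$, and they are not fused by $\Aut(A_n) = S_n$, so $\a(A_n) \geqs 2$ and $A_n$ is not special; the cases $A_5$ and $A_6$ I would settle directly (they also fall under the Lie-type analysis via the exceptional isomorphisms). The sporadic groups reduce to a finite computation: from the known lists of maximal subgroups one reads off the large maximal subgroups, checks (i), and for each isolated candidate $M$ verifies (ii) from the maximal subgroup structure of $M$. This is where groups such as the Monster enter, with $M = 2.\mathbb{B}$.

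The main obstacle is the low-rank Lie-type analysis, and it has two faces. The first is the $\Aut(T)$-fusion bookkeeping among the large maximal subgroups of the small-rank classical and exceptional groups, needed to decide exactly when (i) holds; this requires combining the descriptions of the maximal subgroups (Aschbacher's classes in the classical case) with the action of the graph, field and diagonal automorphisms. The second, and more delicate, is condition (ii) for the Borel subgroup: turning ``no large proper subgroup of $B$'' into a clean criterion means controlling the chief factors of $U$ under $T_0$ and the factorisations of $|T_0|$, and it is the resolution of the resulting number-theoretic condition that pins down the precise families. Delimiting the finitely many small and square $q$ that must be inspected individually, and confirming that no further large maximal subgroup intervenes there, is the final technical hurdle.
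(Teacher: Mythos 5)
Your proposal follows essentially the same route as the paper: the paper's proof of Proposition \ref{p:large} carries out exactly your case analysis (parabolic/Borel subgroups for Lie type with the least-prime-divisor and Diophantine analysis of the torus order, intransitive stabilizers for $A_n$, maximal subgroup tables for the sporadic groups), and Theorem \ref{t:spec} then converts $\b(T)=1$ into specialness by observing that in the generic cases the two large subgroups produced are non-isomorphic, which is your ``distinct orders'' argument for distinguishing $\Aut(T)$-classes. One minor slip worth noting: for ${\rm U}_3(q)$ with $q$ even the Borel torus $C_{(q^2-1)/d}$ has \emph{odd} order, so your ``$\ell=2$'' justification fails there, though the conclusion survives because $3$ divides $q^2-1$ in that case (the paper instead uses ${\rm GU}_2(q)$ when $d=1$ and an index-$3$ subgroup of the Borel when $d=3$).
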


\begin{table}
\[
\begin{array}{lll} \hline
T & H & \mbox{Conditions} \\ \hline
{\rm L}_2(q) & [q]{:}(q-1)/d & \mbox{$d=(2,q-1)$, $(q-1)/d \geqs 7$ is a prime} \\
{}^2B_2(q) & [q^2]{:}(q-1) & \mbox{$q-1$ is a prime} \\
{\rm L}_{2}(3^5) & 3^5{:}121 & \\
{\rm J}_1 & {\rm L}_{2}(11) & \\
{\rm J}_3 & {\rm L}_2(16){:}2 & \\
\mathbb{M} & 2.\mathbb{B} & \\ \hline
\end{array}
\]
\caption{The special simple groups}
\label{tab:special}
\end{table}

\begin{theorem}\label{t:asb2}
Let $G$ be a finite almost simple group with socle $T$. Then $\a(G) \geqs 1$, with equality if and only if $G = T$ is one of the following special simple groups:
\begin{itemize}\addtolength{\itemsep}{0.2\baselineskip}
\item[{\rm (i)}] $G = {\rm L}_2(q)$, $q \geqs 23$ is odd and $(q-1)/2$ is a prime.
\item[{\rm (ii)}] $G = {}^2B_2(q)$ and $q-1$ is a prime.
\item[{\rm (iii)}] $G = {\rm J}_1$ or $\mathbb{M}$.
\end{itemize}
\end{theorem}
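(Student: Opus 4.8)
The plan is to split $\a(G)$ into the contribution of \emph{large} core-free subgroups, each of which automatically has base size at least $3$, and that of the remaining core-free subgroups with $b(G,H) \geqs 3$; the former is governed by Theorem \ref{t:special} and the latter by explicit base-size computations. The starting point is the standard counting bound: for a faithful transitive action with point stabilizer $H$ and base size $b = b(G,H)$ one has $|G| \leqs [G:H]^{b}$, so $b = 2$ forces $|H|^2 \leqs |G|$, and hence every large core-free subgroup has base size at least $3$. Thus $\a(G)$ is bounded below by the number of $G$-classes of core-free large subgroups, and since Proposition \ref{p:large} supplies $G$ with a core-free large subgroup we obtain $\a(G) \geqs 1$ for every almost simple $G$, which proves the first assertion.

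For the equality statement, suppose $\a(G) = 1$, so that $G$ has a single $G$-class of core-free large subgroups and this is the only $G$-class of core-free subgroups with $b \geqs 3$. I would first show $G = T$: when $G$ properly contains $T$, one exhibits a second such class --- either a second large subgroup produced by the outer action on a large subgroup of $T$, or a non-large base-$3$ subgroup --- forcing $\a(G) \geqs 2$; I would verify this using the structure of $\Out(T)$ together with the size estimates underlying Proposition \ref{p:large}. With $G = T$, a single $T$-class of large subgroups is a fortiori a single $\Aut(T)$-class, so $T$ is special and Theorem \ref{t:special} confines $T$ to the six entries of Table \ref{tab:special}.

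It then remains to compute $\a(T)$ exactly for these six groups, checking in each case (a) whether the unique $\Aut(T)$-class of large subgroups stays a single $T$-class, and (b) whether some non-large core-free subgroup attains base size $3$. In the two infinite families the large subgroup is a Borel subgroup forming a single $\Aut(T)$-stable class, so (a) holds and only (b) is at issue. For ${}^2B_2(q)$ and for ${\rm L}_2(q)$ with $q$ odd, primality of the complement order ($q-1$, respectively $(q-1)/2$) forces every other core-free subgroup to have base size $2$, giving $\a(T) = 1$ as in (i) and (ii); for ${\rm L}_2(q)$ with $q$ even a direct computation instead produces a second core-free base-$3$ subgroup, which accounts for the parity restriction in (i). The remaining special group ${\rm L}_2(3^5)$ has composite complement order $(q-1)/2 = 121 = 11^2$, and here the intermediate subgroup $3^5{:}11$ is a second core-free base-$3$ subgroup (notably non-large, only just failing $|H|^2 > |T|$), so $\a > 1$. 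Finally ${\rm J}_1$ and $\mathbb{M}$ give $\a = 1$, the latter immediately from the discussion following Definition \ref{d:sb2}, whereas ${\rm J}_3$ is excluded by the presence of a second core-free base-$3$ subgroup.

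The hard part will be establishing (b) in the positive cases, i.e.\ proving that \emph{no} core-free subgroup other than the distinguished large one attains base size $3$, since this is a statement about the entire subgroup lattice rather than a single subgroup. For ${\rm J}_1$ and $\mathbb{M}$ I would read it off the existing base-size analyses of the sporadic simple groups. For ${\rm L}_2(q)$ and ${}^2B_2(q)$ I would combine the known list of maximal subgroups with fixed-point-ratio estimates to show that every non-Borel core-free subgroup has a base of size $2$, thereby isolating the exact primality and parity conditions that separate $\a = 1$ from $\a \geqs 2$.
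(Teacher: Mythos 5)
Your proposal is correct and follows essentially the same route as the paper: the lower bound $\a(G) \geqs \b(G) \geqs 1$ via the counting bound for large core-free subgroups, reduction of the equality case to the special simple groups of Table \ref{tab:special} (the paper does this through Proposition \ref{p:large} directly, eliminating ${\rm J}_3{:}2$ alongside ${\rm J}_3$), and then a case-by-case determination whose outcomes you identify correctly, including the key witnesses $3^5{:}11$ for ${\rm L}_2(3^5)$ and the second base-three class for even $q$. The verification you defer in the positive cases is carried out in the paper exactly along the lines you indicate, citing base-size results from the literature for the non-parabolic maximal subgroups and giving short direct arguments (Bruhat decomposition, fixed-point-free torus action) for the subgroups of a Borel subgroup.
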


With Theorem \ref{t:asb2} in hand, we now extend our analysis to a wider class of finite groups. Initially, we were interested in the groups which admit a faithful primitive permutation representation (that is to say, the groups with a core-free maximal subgroup). But it turns out that the larger class of groups $G$ with $\Phi(G) = 1$ can be studied with similar methods, where $\Phi(G)$ is the Frattini subgroup of $G$. However, if we relax this condition on $\Phi(G)$, then the problem of understanding the core-free subgroups of $G$ becomes much more complicated and we do not pursue this here. 

In the results presented below, we use $F(G)$ and $F^*(G) = E(G)F(G)$ to denote the Fitting subgroup and the generalized Fitting subgroup of $G$, respectively, where $E(G)$ is the layer of $G$ (the latter is the product of the subnormal quasisimple subgroups of $G$). First we handle the groups with $E(G) \ne 1$. Note that the possibilities for $T$ are determined in Theorem \ref{t:asb2}.

\begin{theorem}\label{t:main1}
Let $G$ be a finite group such that $\Phi(G) = 1$ and $E(G) \ne 1$. Then $\a(G) \geqs 1$, with equality if and only if 
\[
G = T \times C_{p_1} \times \cdots \times C_{p_k},
\]
where $E(G)=T$ is simple, $\a(T)=1$, $k \geqs 0$ and each $p_i$ is a prime.
\end{theorem}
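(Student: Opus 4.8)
The plan is to combine the structure theory of the generalized Fitting subgroup with Theorem~\ref{t:asb2} and the principle that large subgroups have base size at least $3$. Two elementary facts underpin everything. First, for a core-free subgroup $H \leq G$ one has $b(G,H) \geq 3$ if and only if $H \cap H^g \ne 1$ for all $g \in G$; in particular a large $H$ satisfies $|H \cap H^g| \geq |H|^2/|G| > 1$, so $b(G,H) \geq 3$. Second, for a direct product $G = G_1 \times G_2$ and subgroups $H_i \leq G_i$ we have $b(G, H_1 \times H_2) = \max\{b(G_1,H_1), b(G_2,H_2)\}$, since $(H_1 \times H_2) \cap (H_1 \times H_2)^{(g_1,g_2)} = (H_1 \cap H_1^{g_1}) \times (H_2 \cap H_2^{g_2})$. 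Because $\Phi(G) = 1$, the components of $G$ are simple, $F^*(G) = F(G) \times E(G)$ with $E(G) = T_1 \times \cdots \times T_m$ a product of nonabelian simple groups, and $F(G)$ is abelian with $\Phi(F(G)) = 1$; moreover $C_G(F^*(G)) = F(G)$. For the lower bound $\a(G) \geq 1$, I would use Proposition~\ref{p:large} together with this structure to produce a single large core-free subgroup of $G$ (built from a large subgroup of a component), which then has base size at least $3$.

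For necessity, suppose $\a(G) = 1$. The first reduction is $m = 1$: if $m \geq 2$ there are at least two $G$-classes of large core-free subgroups, arising from inequivalent choices of large subgroups among the simple direct factors of the socle, forcing $\a(G) \geq 2$. Hence $T := E(G) = T_1 \trianglelefteq G$ is simple. Put $C = C_G(T)$, so that $\bar G := G/C$ is almost simple with socle $T$, and $G = T \times C$ exactly when $\bar G = T$. If $\bar G \ne T$ then $\bar G$ is not among the groups in Theorem~\ref{t:asb2}, so $\a(\bar G) \geq 2$; lifting two non-conjugate base-$3$ subgroups of $\bar G$ to core-free subgroups of $G$ of base size $\geq 3$ (the delicate point when $C \ne 1$) gives $\a(G) \geq 2$. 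Thus $\bar G = T$ and $G = T \times C$ with $\a(T) = 1$. Finally $\Phi(G) = 1$ gives $\Phi(C) = 1$, and I claim $C$ is abelian: otherwise $C$ is non-nilpotent and possesses a nontrivial core-free subgroup $D$, whence $K \times D$ (with $K$ the large subgroup of $T$) is core-free with $b(G, K \times D) = \max\{b(T,K), b(C,D)\} \geq 3$ and is not $G$-conjugate to $K \times 1$, contradicting $\a(G) = 1$. Therefore $C \cong C_{p_1} \times \cdots \times C_{p_k}$.

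For sufficiency, let $G = T \times A$ with $A = C_{p_1} \times \cdots \times C_{p_k}$ and $\a(T) = 1$. Since $A$ is central, a subgroup $H$ is core-free if and only if $H \cap A = 1$ and $p_T(H) \ne T$, and then $H$ is a graph subgroup $H_\psi = \{(x, \psi(x)) : x \in K\}$ for $K = p_T(H) \lneq T$ and a homomorphism $\psi \colon K \to A$ factoring through $K^{ab}$. A direct computation gives $H_\psi \cap H_\psi^{(t,a)} \cong \{y \in K \cap K^t : \psi(y) = \psi(y^{t^{-1}})\}$, so $b(G, H_\psi) \geq 3$ forces $K \cap K^t \ne 1$ for all $t$, i.e. $b(T,K) \geq 3$; as $\a(T) = 1$ this pins $K$ down to the unique such $T$-class, realized in $G$ by $H_0 = K \times 1$ with $b(G,H_0) = b(T,K) \geq 3$. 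It then remains to show that every twisted subgroup $H_\psi$ with $\psi \ne 0$ has $b(G, H_\psi) = 2$, so that $H_0$ is the only base-$3$ class and $\a(G) = 1$. For $T \in \{{\rm J}_1, \mathbb{M}\}$ the subgroup $K$ is perfect, so $K^{ab} = 1$ forces $\psi = 0$ and there is nothing to check; the genuine cases are $T = {\rm L}_2(q)$ and $T = {}^2B_2(q)$, where $K$ is a Borel subgroup with $K^{ab}$ cyclic of prime order $r$.

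The main obstacle is precisely this last computation. For each nonzero $\psi$ I must exhibit an element $t \in T$ with $K \cap K^t$ a maximal torus $\cong C_r$ on which the two homomorphisms $y \mapsto \psi(y)$ and $y \mapsto \psi(y^{t^{-1}})$ disagree; since $r$ is prime, their quotient is then injective, so no nontrivial $y$ satisfies $\psi(y) = \psi(y^{t^{-1}})$, giving $H_\psi \cap H_\psi^{(t,a)} = 1$ and hence $b(G,H_\psi) = 2$. I expect this to reduce to a concrete statement about how the abelianization map $K \to K^{ab}$ restricts to the tori $K \cap K^{t}$ and $K \cap K^{t^{-1}}$, which should hold for all admissible $q$ but requires the explicit subgroup structure of ${\rm L}_2(q)$ and the Suzuki groups to verify.
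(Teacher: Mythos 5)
Your overall architecture for the sufficiency direction (graph subgroups $H_\psi$ of $T \times A$, reduction to nonzero $\psi$ on the unique large class, perfectness killing the sporadic cases) matches the paper's proof, but both points you flag as delicate are genuine gaps, and for the first one the route you propose actually fails. In the necessity direction you want to pass to $\bar{G} = G/C_G(T)$, deduce $\a(\bar{G}) \geqs 2$ from Theorem \ref{t:asb2} when $\bar{G} \ne T$, and then ``lift'' two base-three classes back to $G$. There is no such lifting in general: the full preimage of a core-free subgroup of $\bar{G}$ contains $C_G(T)$ and is never core-free, while a complement-style lift $H \leqs L$ (writing $G = F(G){:}L$ via Gasch\"{u}tz) satisfies $H \cap H^f = C_H(f)$ for every $f \in F(G)$, which can be trivial even when $b(\bar{G},\bar{H}) \geqs 3$; base size at least $3$ simply does not pull back along $G \to G/C_G(T)$, except for subgroups lying inside $T$. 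The paper never lifts arbitrary base-three subgroups. Instead it first shows $C_L(T)=1$ (a nontrivial $x \in C_L(T)$ yields the core-free subgroup $H \times \la x \ra$, giving a second class), so that $L$ is almost simple with socle $T$; it then reads off from Table \ref{tab:special} that the unique large class of $T$ is a single $T$-class stable under $\Aut(T)$, so $K = N_L(H)$ properly contains $H$ whenever $L \ne T$; and finally it uses the key observation that \emph{any} core-free overgroup of $H$ automatically has base size at least $3$, because $H \cap H^g \ne 1$ for all $g$ already holds by Lemma \ref{l:basic2}. That overgroup trick is the missing idea in your sketch; without it (or a substitute) the step forcing $\bar{G} = T$ is unproved. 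Relatedly, your standing tool is ``$|H|^2 > |G|$ implies $b(G,H) \geqs 3$'', but what is actually needed throughout (for the lower bound $\a(G) \geqs 1$ and for the reduction to $m=1$) is the relative version of Lemma \ref{l:basic2}: largeness in the normal closure suffices, since a large subgroup of one component is certainly not large in $G$.

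In the sufficiency direction your reduction is exactly right, but the statement you defer --- that for each nonzero $\psi$ there exists $t$ with $\psi$ and $\psi \circ (\,\cdot\,)^{t^{-1}}$ disagreeing on the torus $K \cap K^t$ --- is where the real work lies, and you leave it unverified. The paper closes it with a counting argument: for $T = {\rm L}_2(q)$ the Borel subgroup $K = [q]{:}C_r$ (with $r = (q-1)/2$ prime) has $r-1 = (q-3)/2$ classes of elements of order $r$, whereas $T$ has only $(q-3)/4$ such classes, so fusion in $T$ identifies distinct $K$-classes; choosing $t$ with $h$ and $h^t$ not $K$-conjugate, and using that a nonzero $\psi$ is constant on $K$-classes and injective on each torus, gives $\psi(h) \ne \psi(h^t)$ and hence $H_\psi \cap H_\psi^{(t,a)} = 1$ (the Suzuki case is identical, with $q-2$ versus $(q-2)/2$ classes). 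Finally, your claim that a nonabelian $C = C_G(T)$ with $\Phi(C)=1$ has a nontrivial core-free subgroup is correct but unjustified as written; the clean argument is to note $E(C)=1$ (since $C \cap T = 1$), write $C = F(C){:}M$ by Gasch\"{u}tz, and observe that $M \ne 1$ is core-free because any normal subgroup of $C$ inside $M$ centralizes $F(C)$ and so lies in $F(C) \cap M = 1$.
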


Now assume $\Phi(G) = E(G)=1$, so $F^*(G) = F(G)$ is abelian. In order to state our main results in this setting, we need to introduce some additional notation. Fix a prime $p$ and note that $P = O_p(G)$ is elementary abelian since $\Phi(G)=1$, so we may view $P$ as a $G$-module. Let $c_p(G)$ be the maximal dimension of a cyclic $G$-submodule of $P$ and let $c(G) \geqs 1$ be the maximum value of $c_p(G)$ over all primes $p$. Then a concise version of our main result is as follows.

\begin{theorem}\label{t:EG}
Let $G$ be a finite group such that $\Phi(G) = 1$ and $E(G)=1$.
\begin{itemize}\addtolength{\itemsep}{0.2\baselineskip}
\item[{\rm (i)}] If $c(G) \geqs 5$ then $\a(G) \geqs 2$.
\item[{\rm (ii)}] If $c(G)=4$ (respectively, $3$) then $\a(G) \geqs 1$, with equality if and only if $G$ satisfies all of the conditions in Theorem \ref{t:c4} (respectively, Theorem \ref{t:c3}).
\item[{\rm (iii)}] If $c(G) = 1$ then $\a(G) = 0$.
\end{itemize}
\end{theorem}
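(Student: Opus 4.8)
\emph{Reduction to an affine setting.} The hypotheses $\Phi(G)=1$ and $E(G)=1$ give $F^*(G)=F(G)$, which is abelian (indeed $\Phi(G)=1$ forces each $O_p(G)$ to be elementary abelian), together with $C_G(F(G))=Z(F^*(G))=F(G)$. By Gasch\"utz's theorem the abelian normal subgroup $F=F(G)$, having trivial intersection with $\Phi(G)$, is complemented, so $G=F\rtimes K$ with $K\cong G/F$ acting faithfully on $F=\bigoplus_p O_p(G)$, and $c(G)=\max_p c_p(G)$ is the largest dimension of a cyclic $K$-submodule of $F$. I would first record the elementary but crucial \emph{largeness criterion}: if $H$ is core-free and $b(G,H)\leqs 2$ then $H\cap H^g=1$ for some $g$, whence $|H|^2=|HH^g|\leqs |G|$; equivalently, every large core-free subgroup $H$ (one with $|H|^2>|G|$) satisfies $b(G,H)\geqs 3$. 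Thus $\alpha(G)$ is bounded below by the number of $G$-classes of large core-free subgroups, and this is the engine for the lower bounds in (i) and (ii).

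\emph{The module-theoretic heart.} For a core-free $H$ in the affine group $G=F\rtimes K$, the condition $b(G,H)\leqs 2$ unwinds, via the computation $H^{(u,k)}\cap F=k(H\cap F)$ and the graph description of complements, into a \emph{regular-orbit} statement for the linear action of $K$ on the sections $O_p(G)$. The key lemma I would prove is that this behaviour is governed by $c(G)$: a cyclic submodule $U=\langle v\rangle_K\leqs O_p(G)$ of dimension $d$ gives rise to a canonical family of core-free subgroups---stabilizers of subspaces and short flags inside $U$, extended by the induced linear action---whose base sizes are forced to be at least $3$, and the number of $G$-classes so produced grows with $d$. Making this correspondence precise, and in particular checking core-freeness (which reduces to the absence of a nonzero $K$-submodule inside the chosen subspace, using $C_G(F)=F$), is the technical core of the argument.

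\emph{The case analysis.} For part (iii), $c(G)=1$ means every cyclic submodule of each $O_p(G)$ is a line, so $K$ acts on each $O_p(G)$ by scalars; then every subspace is $K$-invariant, so any core-free $H$ must meet $F$ trivially and is therefore a (twisted) complement. Faithfulness of $K$ on $F=\bigoplus_p O_p(G)$ now yields a vector $w=(w_p)$ with $C_K(w)=\bigcap_{w_p\neq 0}\ker(\chi_p)=1$, giving a regular orbit and hence $b(G,H)\leqs 2$ for all core-free $H$; thus $\alpha(G)=0$. For part (i), $c(G)\geqs 5$, I would invoke the heart to exhibit two non-conjugate large core-free subgroups inside the dominant cyclic submodule, forcing $\alpha(G)\geqs 2$. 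For part (ii), $c(G)\in\{3,4\}$, the same construction yields at least one such class, so $\alpha(G)\geqs 1$; the real content is the \emph{equality} analysis, where one shows that a single class occurs precisely under the detailed hypotheses of Theorems \ref{t:c4} and \ref{t:c3}. The passage from one prime to $F=\bigoplus_p O_p(G)$ follows the pattern of Theorem \ref{t:main1}: the non-dominant primes contribute direct cyclic factors that do not create new $b\geqs 3$ classes, so it suffices to analyse the prime attaining $c(G)$.

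\emph{Main obstacle.} The routine parts are the reduction and the scalar case (iii). The genuine difficulty is two-fold and concentrated in (ii): first, pinning down the \emph{exact} number of $G$-classes of core-free subgroups with $b(G,H)\geqs 3$ when $c(G)\in\{3,4\}$---this requires not only counting the large subgroups but also ruling out any small (non-large) core-free subgroup with $b\geqs 3$, since $\alpha$ counts all of them; and second, translating the resulting constraints on $K$ and its action into the precise technical conditions listed in Theorems \ref{t:c4} and \ref{t:c3}. Establishing the monotone count asserted by the key lemma (two classes once $d\geqs 5$, and the borderline behaviour at $d=3,4$) is where the bulk of the effort lies.
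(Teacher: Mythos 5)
Your part (iii) is fine and follows essentially the paper's route (every subgroup of $F(G)$ is normal since $K$ acts by scalars, so core-free subgroups are abelian complements, and a regular orbit kills them; this is the paper's Corollary \ref{c:9} and Theorem \ref{t:c1}). The genuine gap is in the engine you propose for the lower bounds in (i) and (ii). You declare that $\a(G)$ is bounded below by the number of classes of \emph{globally} large core-free subgroups, i.e.\ those with $|H|^2 > |G|$, and you plan to exhibit two such classes when $c(G) \geqs 5$. This cannot work: such subgroups need not exist at all. Take $G = \bigl((C_2)^5{:}C_{31}\bigr) \times C_3 \times C_5 \times C_7 \times \cdots \times C_{p_k}$ with many distinct odd central prime factors. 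Then $c(G) = 5$, but every core-free subgroup meets $Z(G)$ trivially (each subgroup of $Z(G)$ is normal), hence embeds in $(C_2)^5{:}C_{31}$ and has order at most $2^5\cdot 31$, which is far below $\sqrt{|G|}$ once $k$ is large; so there are no globally large core-free subgroups, yet the theorem asserts $\a(G) \geqs 2$. What is actually needed is the \emph{local} criterion used by the paper (Lemma \ref{l:basic2}): if $H$ is core-free, $N$ is the normal closure of $H$, and $|H|^2 > |N|$, then $H \cap H^g \leqs N$ for all $g \in G$ forces $H \cap H^g \ne 1$, so $b(G,H) \geqs 3$. With this, hyperplanes and codimension-two subgroups of the cyclic module (and of products of minimal normal subgroups, as in Lemmas \ref{l:3}--\ref{l:7}) give the two classes; with your global criterion, nothing does.

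Two further points. First, your ``canonical family'' of core-free subgroups --- ``stabilizers of subspaces and short flags inside $U$'' --- is wrong-headed as stated: since $F(G)$ is abelian, the stabilizer in $G$ of any subgroup of $F(G)$ contains $F(G)$ and so is never core-free. The correct objects are the subspaces \emph{themselves}, viewed as subgroups (possibly extended by elements of $L$ normalizing them), and core-freeness is checked via duality: a cyclic module $V$ of order $p^d$ has a core-free subgroup of order $p^{d-1}$ because $V^*$ is cyclic. Second, your proposed reduction to the dominant prime (``non-dominant primes contribute direct cyclic factors that do not create new $b \geqs 3$ classes'') is false: the equality conditions in Theorems \ref{t:c4} and \ref{t:c3} constrain $Q = O_{p'}(G)$ in an essential way (every subgroup of $Q$ normal in $G$, faithful action of specified subgroups of $L$ on $Q$, divisibility conditions such as $r \equiv 1 \imod{3}$ or $r \equiv 1 \imod{8}$), and it is precisely the interaction between the $p$-part and $Q$ (Lemma \ref{l:8} and the converse directions of Theorems \ref{t:c4} and \ref{t:c3}) that determines when $\a(G)=1$. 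Combined with the fact that the entire equality analysis in (ii) is deferred rather than argued, the proposal as written does not establish the theorem.
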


For parts (i) and (iii), we refer the reader to Theorems \ref{t:c5} and \ref{t:c1}, respectively. The analysis of the groups with $c(G)=2$ is more complicated and this explains why they are excluded in the statement of Theorem \ref{t:EG}. Some partial results in this special case are presented in Section \ref{s:cle2} and we note that there are examples with $\a(G) = 0$ and $\a(G) = 1$.

We conclude by recording two corollaries of our main results. First we classify all the nonsolvable strongly base-two finite groups with $\Phi(G)=1$. 

\begin{corol}\label{c:ns}
Let $G$ be a finite nonsolvable group with $\Phi(G)=1$. Then $\a(G) \geqs 1$, with equality if and only if one of the following holds, where $k \geqs 0$ and each $p_i$ is a prime:
\begin{itemize}\addtolength{\itemsep}{0.2\baselineskip}
\item[{\rm (i)}] $G = T \times C_{p_1} \times \cdots \times C_{p_k}$, where $T$ is a simple group with $\a(T) = 1$.
\item[{\rm (ii)}] $G = (C_{59})^2{:}{\rm SL}_2(5) \times C_{p_1} \times \cdots \times C_{p_k}$, where $(C_{59})^2{:}{\rm SL}_2(5) < {\rm AGL}_2(59)$ is primitive and $p_i \ne 2,59$ for all $i$.
\end{itemize}
\end{corol}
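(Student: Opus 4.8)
The plan is to deduce the corollary from Theorems \ref{t:main1} and \ref{t:EG} by splitting on the layer $E(G)$ and extracting the nonsolvable cases; throughout, $G$ is nonsolvable with $\Phi(G) = 1$. If $E(G) \neq 1$, then I apply Theorem \ref{t:main1} directly: it gives $\a(G) \geqs 1$, with equality precisely when $G = T \times C_{p_1} \times \cdots \times C_{p_k}$ for a simple group $T = E(G)$ with $\a(T) = 1$. This is exactly case (i) (and the admissible $T$ are those listed in Theorem \ref{t:asb2}), so it remains to treat $E(G) = 1$.

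Suppose $E(G) = 1$, so $F^*(G) = F(G)$ is abelian. Since $\Phi(G) = 1$, each $O_p(G)$ is elementary abelian and $F(G) = \prod_p O_p(G)$, and as $C_G(F^*(G)) = Z(F^*(G)) = F(G)$, the quotient $L = G/F(G)$ acts faithfully on $F(G)$. Because $F(G)$ is nilpotent and $G$ is nonsolvable, $L$ is nonsolvable. The first step is to show $c(G) \geqs 2$: if $c(G) = 1$ then every vector of every $O_p(G)$ spans a $G$-submodule, so $G$ acts by scalars on each $O_p(G)$, whence $L$ has abelian image in its faithful action on $F(G)$ and $G$ is solvable, a contradiction. (This matches Theorem \ref{t:EG}(iii): $\a(G) = 0$ occurs exactly in the scalar case $c(G) = 1$, which is solvable.)

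Now I invoke Theorem \ref{t:EG}. If $c(G) \geqs 5$ then $\a(G) \geqs 2$ by Theorem \ref{t:c5}, so $G$ is not strongly base-two and contributes nothing. If $c(G) \in \{3,4\}$ then $\a(G) \geqs 1$, and I would verify that the explicit conditions of Theorems \ref{t:c3} and \ref{t:c4} characterising equality are incompatible with $L$ nonsolvable, so these regimes again yield no nonsolvable examples. This leaves the case $c(G) = 2$.

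The main obstacle is the nonsolvable groups with $c(G) = 2$, which lie outside Theorem \ref{t:EG} and must be handled with the partial results of Section \ref{s:cle2}. The plan is as follows: fix a $2$-dimensional cyclic $L$-submodule $W \leqs O_p(G)$; since $L$ is nonsolvable and acts faithfully on $F(G)$, some $2$-dimensional constituent gives a nonsolvable image in ${\rm GL}(W) \cong {\rm GL}_2(p)$. By Dickson's description of the subgroups of ${\rm GL}_2$, the nonsolvable possibilities have composition factor of type ${\rm PSL}_2(r)$; the defining-characteristic families ${\rm SL}_2(p^a)$ are eliminated by the base-size count, since they carry several classes of core-free subgroups with $b(G,H) \geqs 3$ and so force $\a(G) \geqs 2$, leaving only the cross-characteristic group $L = {\rm SL}_2(5) = 2.{\rm A}_5$ acting as its faithful $2$-dimensional module on $W$, with the remaining summands of $F(G)$ trivial. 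A careful orbit- and subgroup-count along the lines of Section \ref{s:cle2} then shows that $\a(G) \geqs 1$ throughout, with $\a(G) = 1$ for exactly one prime $p = 59$, giving the primitive affine group $(C_{59})^2{:}{\rm SL}_2(5) < {\rm AGL}_2(59)$; analysing the trivial module summands and the central involution of ${\rm SL}_2(5)$ then produces the constraints $p_i \neq 2, 59$ on the direct factors, yielding case (ii). The crux is precisely this sharp count in the $c(G) = 2$ regime, which isolates the single prime $59$.
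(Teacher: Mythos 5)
Your reduction is sound as far as it goes: the split on $E(G)$, the use of Theorem \ref{t:main1} to produce case (i), the observation that $c(G)=1$ forces $L$ abelian (hence $G$ solvable), and the elimination of $c(G)\geqs 5$ via Theorem \ref{t:c5} all match the paper. For $c(G)\in\{3,4\}$, the step you label ``I would verify'' can be closed at once by citing Corollary \ref{c:eg1}, which already records that $E(G)=1$, $c(G)\neq 2$ and $\a(G)\leqs 1$ together force $G$ to be solvable; as written, this step is a sketch, but a repairable one.

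The genuine gap is the case $c(G)=2$, which you correctly identify as the crux but do not prove. Your plan (Dickson's theorem to reduce to ${\rm SL}_2(5)$, elimination of defining-characteristic subgroups ``by the base-size count'', then ``a careful orbit- and subgroup-count along the lines of Section \ref{s:cle2}'' isolating $p=59$) is exactly the content that needs an argument, and none is supplied: you never explain why the image of $L$ on the relevant $(C_p)^2$ cannot be a central product $Z\circ {\rm SL}_2(5)$ with $Z$ larger than $\la -1 \ra$, why no other prime $p\equiv \pm 1 \imod{5}$ survives, why $\a(G)\geqs 1$ holds for every nonsolvable group in this regime, or why the remaining summands of $F(G)$ contribute only central factors $C_{p_i}$ with $p_i\neq 2,59$. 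The paper closes all of this with one citation. Nonsolvability gives $r(G)\geqs 2$ (otherwise $L$ embeds in a product of ${\rm GL}_1$'s and is abelian), so $r(G)=c(G)=2$; moreover $L'$ must have even order, since if $|L'|$ is odd then $L'\leqs O(L)$ and $L/O(L)$ is an abelian $2$-group, making $L$, and hence $G$, solvable. Now Theorem \ref{t:evenderived} applies: it gives $\a(G)\geqs 1$ and classifies equality, and nonsolvability forces its case (iv)(d), namely $L={\rm SL}_2(5)$ and $p=59$. Note that $59$ drops out mechanically from the condition $|L|=2(p+1)=|{\rm SL}_2(5)|=120$, not from an ad hoc count, while conditions (ii) and (iii) of that theorem (minimality of $P=O_{59}(G)$, $Q=O_{59'}(G)$ of odd order with every subgroup normal, and $L$ perfect acting trivially on $Q$) yield precisely the central factors $C_{p_1}\times\cdots\times C_{p_k}$ with $p_i\neq 2,59$. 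Finally, Section \ref{s:cle2}, which you propose to imitate, is explicitly restricted to the case where $L'$ has odd order, so it cannot supply the missing count; the tool you need is Theorem \ref{t:evenderived}, and without invoking it (or reproving it) your argument does not establish case (ii).
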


Our final result describes the strongly base-two groups with a core-free maximal subgroup. That is to say, we determine the strongly base-two finite groups which admit a faithful primitive permutation representation. In part (iii), recall that an irreducible linear group $L \leqs {\rm GL}_d(p)$ is \emph{imprimitive} if it preserves a nontrivial direct sum decomposition of the natural module $(\mathbb{F}_p)^d$, otherwise $L$ is \emph{primitive}.

\begin{corol}\label{c:prim}
Let $G$ be a finite group with socle $N$ and assume $G$ has a core-free maximal subgroup. Then $\a(G) \leqs 1$ if and only if one of the following holds:
\begin{itemize}\addtolength{\itemsep}{0.2\baselineskip}
\item[{\rm (i)}] $G= N$ is simple and $\a(G) = 1$. 
\item[{\rm (ii)}] $G = N{:}C_{p^2+p+1} < {\rm AGL}_3(p)$, where $N = (C_p)^3$ and $p \not\equiv 1 \imod{3}$.
\item[{\rm (iii)}] $G = N{:}L \leqs {\rm AGL}_2(p)$, where $N = (C_p)^2$ and one of the following holds:

\vspace{1mm}

\begin{itemize}\addtolength{\itemsep}{0.2\baselineskip}
\item[{\rm (a)}] $p=2$ and $L = C_3$ or $S_3$.
\item[{\rm (b)}] $p \geqs 3$, $L \cap Z({\rm GL}_2(p)) = 1$ and $L$ is either abelian or primitive.
\item[{\rm (c)}] $p \geqs 7$ and $L = D_{2r}$ is imprimitive, where $r$ is an odd prime divisor of $p-1$.
\item[{\rm (d)}]  $p \equiv 3 \imod{4}$, $|L|=2(p+1)$ and either $L$ is cyclic, or $Z(L) = C_2$ and $L$ is primitive.
\end{itemize}

\item[{\rm (iv)}] $G = N{:}L \leqs {\rm AGL}_1(p)$, where $N = C_p$ and $L \leqs {\rm GL}_1(p)$. 
\end{itemize}
\end{corol}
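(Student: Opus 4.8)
The plan is to derive the corollary from the general classification by isolating the groups that admit a faithful primitive permutation representation, equivalently those with a core-free maximal subgroup. First I would observe that if $M$ is a core-free maximal subgroup of $G$, then $\Phi(G) \leqs M$ is a normal subgroup lying in the (trivial) core of $M$, so $\Phi(G) = 1$ and the hypotheses of Theorems \ref{t:main1} and \ref{t:EG} are met. The argument then divides according to whether the layer $E(G)$ is trivial, which is precisely the O'Nan--Scott dichotomy between nonabelian and abelian socle.

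Suppose first $E(G) \ne 1$. Theorem \ref{t:main1} tells us that $\a(G) \leqs 1$ forces $\a(G) = 1$ and $G = T \times C_{p_1} \times \cdots \times C_{p_k}$ with $T = E(G)$ simple and $\a(T) = 1$. The key point is that a nontrivial abelian direct factor is incompatible with having a core-free maximal subgroup: writing $A = C_{p_1} \times \cdots \times C_{p_k}$, any core-free maximal $M$ satisfies $M \cap A \normeq G$ and hence $M \cap A = 1$, while maximality gives $MA = G$; since $A \leqs Z(G)$ this makes $M$ a normal complement to $A$, forcing $M = G' = T \ne 1$, a contradiction. Thus $k = 0$ and $G = T$ is simple with $\a(G) = 1$, which is case (i) (and conversely every maximal subgroup of a simple group is core-free).

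Now assume $E(G) = 1$, so $F^*(G) = F(G)$ is abelian and, by the structure of primitive groups with abelian socle, $G$ is an affine group: the socle $N = (C_p)^d$ is the unique minimal normal subgroup, $N = C_G(N) = F(G)$, and $G = N{:}L$ with $L \leqs \GL_d(p)$ acting irreducibly. Since an irreducible module is generated by any nonzero vector, $N$ is a cyclic $G$-module of dimension $d$, and as $F(G)$ is a $p$-group this yields $c(G) = d$. I would then dispatch the cases by the value of $d$. If $d \geqs 5$, then $c(G) \geqs 5$ and Theorem \ref{t:EG}(i) gives $\a(G) \geqs 2$, so there is nothing; if $d = 1$, then $c(G) = 1$, Theorem \ref{t:EG}(iii) gives $\a(G) = 0$, and every $C_p{:}L$ with $L \leqs \GL_1(p)$ is primitive, which is case (iv). The remaining values $d = 4, 3, 2$ are handled by specialising Theorems \ref{t:c4} and \ref{t:c3} and the analysis of Section \ref{s:cle2} to irreducible $L$: I expect $d = 4$ to contribute nothing (the examples underlying Theorem \ref{t:c4} should all be imprimitive), $d = 3$ to yield exactly the Singer-type family $N{:}C_{p^2+p+1}$ with $p \not\equiv 1 \imod{3}$ of case (ii), and $d = 2$ to yield case (iii).

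The main obstacle is the two-dimensional case $d = 2$, i.e. $c(G) = 2$, which is deliberately omitted from the clean statement of Theorem \ref{t:EG} and requires the finer computations of Section \ref{s:cle2}. Here I would run through the irreducible subgroups $L \leqs \GL_2(p)$ using Dickson's classification --- the cyclic (Singer) subgroups, the imprimitive (monomial) subgroups normalising a split torus, and the primitive subgroups containing $\SL_2(p)$ or one of the exceptional groups $2.A_4, 2.S_4, 2.A_5$ --- and compute $\a(G)$ in each case, keeping track of the scalar intersection $L \cap Z(\GL_2(p))$ and of whether $L$ is cyclic or primitive. Matching the outcome to the four subcases (a)--(d), including the characteristic-two exception $\GL_2(2) = S_3$ and the dihedral family $D_{2r}$ with $r \mid p-1$, is where essentially all the work lies; verifying that $d = 4$ produces no primitive example is a smaller but similar check.
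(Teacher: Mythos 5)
Your overall reduction is the same as the paper's, and the easy cases are handled correctly: a core-free maximal subgroup forces $\Phi(G)=1$; Theorem \ref{t:main1} disposes of $E(G) \ne 1$ (your direct argument that a central factor $C_{p_1} \times \cdots \times C_{p_k}$ is incompatible with a core-free maximal subgroup is a sound alternative to the paper's appeal to the fact that the socle of a primitive group is a power of a single simple group); and in the abelian-socle case $G = N{:}L$ is affine with $c(G)=d$, so Theorems \ref{t:EG} and \ref{t:c3} dispatch $d=1$, $d \geqs 5$ and $d=3$ exactly as you indicate. The genuine gap is that for $d=2$ --- which produces all of part (iii) and is, as you yourself admit, ``where essentially all the work lies'' --- you offer only a plan (``run through Dickson's classification and compute $\a(G)$ in each case'') with no actual argument. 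In the paper this computation occupies Theorem \ref{t:evenderived} and Proposition \ref{p:prim2} (the latter proved immediately before the corollary and resting on the regular-orbit result, Proposition \ref{p:regorbit}). For $|L'|$ even one must show that $\a(G) \leqs 1$ forces $L$ to have a unique involution, hence generalized quaternion Sylow $2$-subgroups, and that $L$ modulo that involution acts regularly on the relevant set of diagonal subgroups, whence $|L|=2(p+1)$, $p \equiv 3 \imod{4}$ and $L = 2.D_{p+1}$, ${\rm SL}_2(3)$, $2.S_4$ or ${\rm SL}_2(5)$ (with $p = 11, 23, 59$ in the last three cases); for $|L'|$ odd one needs the abelian/imprimitive/primitive trichotomy in which the answer turns on the scalar subgroup $L \cap Z({\rm GL}_2(p))$ and on $L$ having a regular orbit on $N$. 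None of these statements, let alone their proofs, appears in your proposal, so cases (iii)(a)--(d) are asserted rather than derived.

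There is also a concrete error in your deferred check for $d=4$. The reason Theorem \ref{t:c4} contributes no primitive example is \emph{not} that the underlying examples are imprimitive: in case (iii)(a) of that theorem, $L = C_{15}$ is a Singer cycle on $P = (C_2)^4$, transitive on the $15$ nonzero vectors and hence irreducible and even primitive as a linear group. The correct obstruction is that condition (iii)(a) forces $|Q|$ to be divisible by a prime $r \equiv 1 \imod{3}$, so $Q = O_{2'}(G) \ne 1$, which is impossible for a primitive affine group since there $F(G) = N = O_2(G)$; and condition (iii)(b) requires two minimal normal subgroups, contradicting irreducibility of $N$. Indeed $\a\bigl((C_2)^4{:}C_{15}\bigr) \geqs 2$: when $Q=1$, besides the core-free subgroups of order $8$, the subgroups $\la D, x^5 \ra$ of order $12$ also have base size at least $3$. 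So the ``smaller but similar check'' you defer would fail if carried out along the lines you describe, even though the conclusion $\a(G) \geqs 2$ for $d=4$ is correct.
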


By inspecting the groups arising in Corollary \ref{c:prim}, we observe that $\a(G) \leqs 1$ only if $N$ is the unique minimal normal subgroup of $G$. Let us also record that $\a(G)=1$ in parts (i) and (ii), whereas  $\a(G) = 0$ for the groups in part (iv). In part (iii), we find that $\a(G) = 0$ if and only if $(p,L) = (2,C_3)$ or (b) holds. 

\vs

\noindent \textbf{Organization.} In Section \ref{s:special} we assume $G$ is an almost simple group and we prove Theorems \ref{t:special} and \ref{t:asb2}. For the remainder of the paper, we assume $G$ is an arbitrary finite group with $\Phi(G)=1$ and we begin our analysis by presenting some general lemmas in Section \ref{s:general}, which impose restrictions on the structure of the socle of a strongly base-two group. In Section \ref{s:t3} we prove Theorem \ref{t:main1}, which describes the strongly base-two groups with trivial Frattini subgroup and nonabelian socle. Next in Section \ref{s:special2} we handle a special family of groups with abelian socle, which is then used in our analysis of the groups with $E(G)=1$ and $c(G) \ne 2$ in Section \ref{s:c4}, where we prove Theorem \ref{t:EG}. Corollaries \ref{c:ns} and \ref{c:prim} are established in Section \ref{s:cor} and we finish by presenting some partial results on the strongly base-two groups with $c(G) = 2$ in 
Section \ref{s:cle2}.
 
\vs

\noindent \textbf{Notation.} Let $G$ be a finite group and let $n$ be a positive integer. We will write $C_n$, or just $n$, for a cyclic group of order $n$ and $G^n$ will denote the direct product of $n$ copies of $G$. An unspecified extension of $G$ by a group $H$ will be denoted by $G.H$; if the extension splits then we may write $G{:}H$. We use $[n]$ for an unspecified solvable group of order $n$. We adopt the standard notation for simple groups of Lie type from \cite{KL}.

\vs

\noindent \textbf{Acknowledgements.} We thank three anonymous referees for their helpful comments and suggestions on an earlier version of this paper. We also thank the Department of Mathematics at the California Institute of Technology for their generous hospitality during a research visit in spring 2022. Guralnick was partially supported by the NSF grant DMS-1901595 and a Simons Foundation Fellowship 609771. 

\section{Almost simple groups}\label{s:special}

In this section we prove Theorems \ref{t:special} and \ref{t:asb2}. We begin by recalling the following result, which is \cite[Theorem A]{FKL}.

\begin{thm}\label{t:fkl}
Let $G$ be a finite group of composite order. Then $G$ has a proper subgroup $H$ such that $|H|^2 \geqs |G|$.
\end{thm}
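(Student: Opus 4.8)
The plan is to recast the statement in terms of indices: writing $n = |G|$, a proper subgroup $H$ with $|H|^2 \geqs n$ is exactly a proper subgroup of index at most $\sqrt{n}$, so it suffices to produce a proper subgroup whose index does not exceed $\sqrt{|G|}$. I would argue by induction on $|G|$ and split immediately according to whether $G$ is simple. The non-simple case is elementary and reduces the whole problem to simple groups; the simple case is where the real work (and the classification) lies.

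Suppose first that $G$ is not simple. Then $G$ has a maximal normal subgroup $N$; since $G$ is not simple, $N$ is nontrivial, $N \ne G$, and $Q = G/N$ is simple. If $Q \cong C_q$ has prime order $q$, there are two possibilities. If $|N| \geqs q$, then $H = N$ is proper and $|N|^2 \geqs q|N| = |G|$. If instead $|N| < q$, then Cauchy's theorem supplies an element of order $q$ generating a cyclic subgroup $C$ which is proper (as $|G| = q|N| > q$) and satisfies $|C|^2 = q^2 > q|N| = |G|$. Either way we are done. If on the other hand $Q$ is nonabelian simple, then $Q$ has composite order and $|Q| < |G|$ (because $N \ne 1$), so by induction $Q$ has a proper subgroup $\bar H$ with $|\bar H|^2 \geqs |Q|$. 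Taking $H$ to be the preimage of $\bar H$ in $G$ (so that $N \leqs H$ and $H/N = \bar H$), the subgroup $H$ is proper and
\[
|H|^2 = |\bar H|^2\,|N|^2 \geqs |Q|\,|N|^2 = |G|\,|N| \geqs |G|,
\]
as required. This settles every non-simple $G$, and in particular disposes of all abelian groups of composite order.

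It remains to treat the base case in which $G$ is a nonabelian simple group, and this is the heart of the matter: as far as I can see it genuinely requires the classification of finite simple groups. For each such $G$ one must exhibit a maximal subgroup of index at most $\sqrt{|G|}$, equivalently a large subgroup in the sense of Definition \ref{d:large}. For the alternating groups a point stabilizer $A_{n-1} < A_n$ will do; for the groups of Lie type a Borel subgroup $B$ suffices, since in the untwisted case of rank $\ell$ with $m$ positive roots one has $|B| \approx q^{m+\ell}$ and $|G| \approx q^{2m+\ell}$, whence $|B|^2/|G| \approx q^{\ell} \geqs 1$; and for the finitely many sporadic groups one checks the claim by inspecting the orders of the maximal subgroups. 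This is precisely the special case of Proposition \ref{p:large} quoted in the introduction, and I expect the main obstacle to lie entirely here rather than in the reduction: one needs the full list of simple groups, together with enough control of their maximal subgroups (or minimal permutation degrees), to guarantee index $\leqs \sqrt{|G|}$ in every case, with the small-rank and sporadic groups demanding the most care since there the estimates above are coarsest.
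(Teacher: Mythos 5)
Your reduction to the nonabelian simple case is correct, and it is exactly the route the paper has in mind: the paper does not prove Theorem \ref{t:fkl} in full, but cites [FKL] and sketches precisely this induction in the remark following the theorem. In fact your non-simple case is carried out in more detail than the paper's sketch and is sound (the maximal normal subgroup $N$ is necessarily nontrivial, both subcases for $G/N$ of prime order work, and the inductive step gives $|H|^2 \geqs |G||N| \geqs |G|$). The problem lies in your instantiation of the CFSG inspection.

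The claim that a Borel subgroup suffices for the groups of Lie type is false, and it fails not at ``small rank'' but for \emph{every} split group over $\mathbb{F}_2$, of arbitrary rank. For a split group over $\mathbb{F}_q$ of rank $\ell$, with $m$ positive roots, degrees $d_1,\ldots,d_\ell$ and center of order $d$, one has $|B| = q^m(q-1)^{\ell}/d$ and $|G| = q^m\prod_i(q^{d_i}-1)/d$, so
\[
\frac{|B|^2}{|G|} \,=\, \frac{q^m(q-1)^{2\ell}}{d\prod_i (q^{d_i}-1)}.
\]
Your approximation $|B|\approx q^{m+\ell}$, $|G|\approx q^{2m+\ell}$ discards the factor $(q-1)^{\ell}$ versus $q^{\ell}$, which is catastrophic when $q-1=1$: for $q=2$ the split torus is trivial, $B$ is just a Sylow $2$-subgroup, and since every $d_i \geqs 2$ we get $\prod_i(2^{d_i}-1) \geqs (3/2)^{\ell}2^m$, hence $|B|^2/|G| \leqs (2/3)^{\ell} < 1$ for the entire families ${\rm L}_n(2)$, ${\rm Sp}_{2n}(2)$, ${\rm P\Omega}_{2n}^{\pm}(2)$, $F_4(2)$, $E_n(2)$, etc. The smallest counterexample is ${\rm L}_3(2)$ of order $168$, where $|B|=8$ and $|B|^2 = 64 < 168$; likewise ${\rm L}_4(2) \cong A_8$ has $|B| = 64$ and $64^2 = 4096 < 20160$, and ${\rm Sp}_6(2)$ has $|B|=512$ with $512^2 < 1451520$. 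The repair is what the paper actually does in proving the stronger Proposition \ref{p:large}: take a maximal parabolic subgroup rather than the Borel, e.g.\ the stabilizer $P_1$ of a $1$-space, whose index is roughly $q^{n-1}$ rather than roughly $q^m$; for ${\rm L}_3(2)$ this gives $|P_1| = 24$ and $24^2 = 576 > 168$. So the outline (induction plus CFSG inspection) matches the paper, but the witness you name in the Lie-type case, which is the heart of the theorem, does not exist for infinitely many of the groups being inspected.
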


\begin{rem}
The proof of this result relies on the classification of finite simple groups. Indeed, by applying induction on $|G|$, it is straightforward to reduce the proof to nonabelian simple groups, at which point one can inspect each family of simple groups in turn and subsequently identify a maximal subgroup with the desired property. This extends earlier work of Brauer and Fowler \cite[Theorem 2D]{BF}, who showed that every finite group $G$ with even composite order has a proper subgroup $H$ with $|H|^3 \geqs |G|$ (if $|G|$ is odd and composite, then $G$ is solvable by the Feit-Thompson theorem and it is easy to identify a proper subgroup $H$ with $|H|^2 \geqs |G|$).  
\end{rem}

\begin{rem}\label{r:b2}
Observe that $b(G,H) \geqs 3$ if $H$ is a core-free subgroup of $G$ and $|H|^2 \geqs |G|$. This is clear if $|H|^2>|G|$, so let us assume $|H|^2 = |G|$. If $b(G,H) = 2$ then there exists $g \in G$ such that the double coset $HgH$ has size $|H|^2 = |G|$, so $G = HgH$ and thus $G = HH^g$. Then $g = hk$ with $h \in H$ and $k \in H^g$, so $H = H^{gk^{-1}} = H^g$ and we have reached a contradiction.
\end{rem}

We will need the extension of Theorem \ref{t:fkl} for almost simple groups presented in Proposition \ref{p:large} below. In order to state this result, we require the following definition. Recall that a subgroup $H$ of a finite group $G$ is \emph{large} if $H \ne G$ and $|H|^2>|G|$.

\begin{defn}\label{d:beta}
Let $G$ be a finite group and define $\b(G)$ to be the number of conjugacy classes of large core-free subgroups of $G$. Notice that $\a(G) \geqs \b(G)$.
\end{defn}

\begin{prop}\label{p:large}
Let $G$ be a finite almost simple group. Then $\b(G) \geqs 1$, with equality if and only if $G$ is one of the following:
\begin{itemize}\addtolength{\itemsep}{0.2\baselineskip}
\item[{\rm (i)}] $G = {\rm L}_2(q)$ and $(q-1)/(2,q-1) \geqs 7$ is a prime.
\item[{\rm (ii)}] $G = {}^2B_2(q)$ and $q-1$ is a Mersenne prime.
\item[{\rm (iii)}] $G = {\rm L}_{2}(3^5)$, ${\rm J}_1$, ${\rm J}_3$, ${\rm J}_3{:}2$ or 
$\mathbb{M}$.
\end{itemize}
\end{prop}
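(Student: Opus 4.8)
The plan is to reduce to large \emph{maximal} subgroups and then, for each surviving candidate, to analyse its large subgroups by hand. Write $T = \soc(G)$; since $T$ is the unique minimal normal subgroup of an almost simple group, a subgroup $H \leqs G$ is core-free precisely when $T \not\leqs H$. Any large subgroup of $G$ lies in a maximal subgroup $M$ with $|M| \geqs |H|$, whence $|M|^2 > |G|$ and $M$ is itself large. I would therefore start from the known classification of the large maximal subgroups of almost simple groups (extracting the relevant list from the work of Alavi and Burness and retaining only those satisfying the \emph{strict} inequality $|M|^2 > |G|$), organised according to whether $T$ is alternating, sporadic or of Lie type. The lower bound $\b(G) \geqs 1$ is the easy half: in each case one exhibits a single core-free large maximal subgroup -- a Borel subgroup when $T$ is of Lie type, a point stabiliser when $T$ is alternating, and the top maximal subgroup recorded in the \texttt{ATLAS} when $T$ is sporadic -- and checks that $|M|^2 > |G|$ is strict. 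Note that Theorem \ref{t:fkl} only supplies $|M|^2 \geqs |G|$, so this strictness has to be confirmed directly, but that is routine.

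The substance lies in the equality case. When $T$ has Lie rank at least $2$, two non-conjugate large maximal subgroups are readily produced (two distinct maximal parabolics, or a parabolic together with a subsystem subgroup), so $\b(G) \geqs 2$; a parallel argument using two intransitive or imprimitive subgroups disposes of the alternating groups. This leaves the rank one families ${\rm L}_2(q)$, ${}^2B_2(q)$, ${}^2G_2(q)$ and $U_3(q)$, for which the Borel subgroup $B = U{:}C$ (with $U$ the unipotent radical and $C$ a split torus of order $e$) is the unique large maximal subgroup once $q$ is not too small. Here I would study the large subgroups $H \leqs B$ through the factorisation $|H| = |H \cap U| \cdot |HU/U|$, where $|HU/U|$ divides $e$. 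The decisive computation is that, on taking $H = U{:}D$ with $D$ the largest \emph{proper} subgroup of $C$ (of index the smallest prime $s$ dividing $e$), the subgroup $H$ is large unless $e$ is prime or $e = s^2$ is the square of a prime. In the first case $C$ acts irreducibly on $U$ and one checks that $B$ is the only large subgroup; in the second, the single intermediate subgroup $U{:}C_s$ turns out to have order just below $|G|^{1/2}$, and a short Diophantine analysis of $e = (q-1)/d = s^2$ with $q$ a prime power leaves only the values $q \in \{9,19,243\}$, the first two of which are removed below, isolating ${\rm L}_2(3^5)$ (where $e = 121 = 11^2$ and one verifies directly that no proper subgroup of its Borel is large). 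For ${}^2G_2(q)$ and $U_3(q)$ the torus order ($q-1$, respectively $(q^2-1)/d$) is always composite and never the square of a prime, so the subgroup $U{:}D$ above is invariably large and these two families are excluded.

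The threshold $(q-1)/d \geqs 7$ in part (i) is then forced by a finite list of small groups in which an exceptional subgroup becomes large: for small $q$ one of ${\rm A}_5$, ${\rm S}_4$ or ${\rm A}_4$ is large (for example $60^2 > |{\rm L}_2(19)|$, so ${\rm A}_5 < {\rm L}_2(19)$ yields a second class), and when $q = q_0^2$ is a square the subfield subgroup ${\rm PGL}_2(q_0)$ is large; these cases must be catalogued and discarded one by one. For the sporadic groups I would argue directly from the \texttt{ATLAS}, counting the maximal subgroups $M$ with $|M|^2 > |T|$ and testing each for a proper large subgroup; this finite computation singles out ${\rm J}_1$, ${\rm J}_3$ and $\mathbb{M}$. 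Finally, for the almost simple groups with $T < G$, adjoining outer automorphisms almost always creates a second class of large core-free subgroups -- for the groups of Lie type the Borel of $T$ and the strictly larger Borel of $G$ give two non-conjugate large subgroups -- so $\b(G) \geqs 2$, the lone survivor ${\rm J}_3{:}2$ being confirmed by a direct order comparison.

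The main obstacle is the rank one Lie type analysis: one must control the entire lattice of $C$-invariant subgroups of $U$ simultaneously with the order inequality $|H|^2 > |G|$, and recognise that the primality of $e$ (or the prime-square coincidence underlying ${\rm L}_2(3^5)$) is exactly what prevents a second large subgroup from appearing. The accompanying determination of the small-$q$ boundary -- enumerating every small group in which an exceptional or subfield subgroup is large, and thereby pinning down the condition $(q-1)/d \geqs 7$ -- is the other point demanding care.
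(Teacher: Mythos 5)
Your proposal is correct and follows essentially the same route as the paper's proof: reduction to large maximal subgroups, two non-conjugate parabolics in rank at least two, analysis of the rank-one Borel $B = U{:}C$ via the index-$s$ subgroup $U{:}D$ (with $s$ the smallest prime divisor of the torus order $e$) together with the Diophantine analysis of $e = s^2$ isolating ${\rm L}_2(3^5)$, finite \texttt{ATLAS}-style checks for the sporadic groups and small cases, subset stabilizers for alternating groups, and the pair of Borels (of $T$ and of $G$) when $T < G$. The only differences are cosmetic: you outsource the order estimates for maximal subgroups to the Alavi--Burness classification and give a unified argument that composite $e \neq s^2$ forces $U{:}D$ to be large, where the paper argues family by family using Bennett--Skinner, a lemma of Burness--Tong-Viet on $2^f - 1 = r^{\ell}$, and Fermat's little theorem for the Suzuki groups.
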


\begin{proof}
Let $H_1, \ldots, H_k$ be a complete set of representatives of the conjugacy classes of the core-free maximal subgroups of $G$, where $|H_i| \geqs |H_{i+1}|$ for all $i$. Let $T$ be the socle of $G$.

First assume $T$ is a sporadic simple group. If $T = {\rm O'N}$ then $G$ contains subgroups $H = {\rm L}_3(7).2$ and ${\rm L}_3(7)$, both of which are large. In every other case, by inspecting the tables of maximal subgroups in \cite{Wilson} it is routine to check that $|H_1|^2 > |G|$, so $\b(G) \geqs 1$. Similarly, if $T \not\in \{ {\rm J}_1, {\rm J}_3, \mathbb{M}\}$ then we also find that $|H_2|^2 > |G|$ and thus $\b(G) \geqs 2$. If $T = {\rm J}_1$ or ${\rm J}_3$ then $\max\{\frac{1}{4}|H_1|^2, |H_2|^2\}<|G|$ and we conclude that $\b(G) = 1$. Similarly, if $G = \mathbb{M}$ then $|H_2|^2 < |G|$ and we claim that $|J|^2<|G|$ for every maximal subgroup $J$ of $H_1 = 2.\mathbb{B}$, which implies that $\b(G) = 1$. To see this, first note that $J$ contains $Z = Z(H_1) = C_2$, so $J/Z$ is a maximal subgroup of $H_2/Z = \mathbb{B}$. This implies that $|J| \leqs 2|2.{}^2E_6(2){:}2|$ and we deduce that $|J|^2 < |G|$ as claimed.

Next suppose $T = A_n$ is an alternating group. If $n=5$ then it is straightforward to check that $|H_2|^2>|G|$ and thus $\b(G) \geqs 2$. Similarly, if $n \geqs 7$ then $G = S_n$ or $A_n$ and by considering the stabilizers of $1$-sets and $2$-sets (with respect to the natural action on $\{1, \ldots, n\}$) we deduce that $|H_2| \geqs (n-2)!$, which immediately yields $|H_2|^2>|G|$. Finally, if $n=6$ then with the aid of {\sc Magma} \cite{Magma} it is easy to check that $\b(G) \geqs 2$ (in fact, $|H_2|^2 > |G|$ unless $G = {\rm PGL}_2(9)$ or ${\rm M}_{10}$). 

To complete the proof, we may assume $T$ is a simple group of Lie type over $\mathbb{F}_q$, where $q=p^f$ for a prime $p$ and positive integer $f$. If $T$ is a classical group, then let $V$ be the natural module. We divide the analysis into two cases according to the twisted Lie rank of $T$, denoted by ${\rm rk}(T)$.

\vs

\noindent \emph{Case 1. ${\rm rk}(T)=1$.} 

\vs

We begin by assuming $T$ has twisted Lie rank $1$, so $T = {\rm L}_2(q)$, ${\rm U}_3(q)$, ${}^2B_2(q)$ or ${}^2G_2(q)'$. We consider each possibility in turn.

Suppose $T = {\rm L}_2(q)$. Set $d=(2,q-1)$ and $|G:T|=m$, so $m \leqs df$. Since we have already treated the alternating groups, we may assume $q \geqs 7$ and $q \ne 9$. The groups with $q \leqs 19$ can be handled using {\sc Magma}, so we will assume $q \geqs 23$. Then $H_1$ is a Borel subgroup of order $q(q-1)m/d$, so $|H_1|^2>|G|$ and $\b(G) \geqs 1$. If $m>1$ then it is straightforward to check that $|H|^2 > fq(q^2-1) \geqs |G|$, where $H = H_1 \cap T$ has order $q(q-1)/d$. Therefore, we may assume $G=T$.

Suppose $p=2$, so $f \geqs 5$. If $f$ is even then $q-1$ is divisible by $3$ and $|H|^2>|G|$ where $H$ is an index-three subgroup of $H_1$. Now assume $f$ is odd and note that $|H_2|^2<|G|$ (see \cite[Tables 8.1, 8.2]{BHR}, for example). If $q-1$ is a prime then the largest proper subgroup of $H_1$ has order $q$ and we deduce that $\b(G) = 1$. Now assume $q-1$ is composite and let $r$ be the smallest prime divisor of $q-1$. Then the largest proper subgroup of $H_1$ has order $q(q-1)/r$, so $\b(G) \geqs 2$ if and only if
\begin{equation}\label{e:eq1}
r^2 < \frac{q(q-1)}{q+1}.
\end{equation}
By \cite[Lemma 2.6]{BTV}, there are no solutions to the equation $2^f-1 = r^{\ell}$ with $\ell \geqs 2$, so $q-1$ must be divisible by at least two distinct primes and thus $r(r+2) \leqs q-1$. This implies that $r \leqs \sqrt{q}-1$ and one can now check that \eqref{e:eq1} holds.

Now assume $p$ is odd and $q \geqs 23$. If $f$ is even then $H = {\rm PGL}_2(q^{1/2})$ is large, so let us assume $f$ is odd. Then $|H_2|^2<|G|$, so we need to determine if $H_1$ has a proper subgroup $H$ with $|H|^2>|G|$. If $(q-1)/2$ is a prime then the largest proper subgroup of $H_1$ has order $q$ and we deduce that $\b(G) = 1$. Now assume $(q-1)/2$ is composite and let $r$ be the smallest prime divisor of $(q-1)/2$. Then $\b(G) \geqs 2$ if and only if
\begin{equation}\label{e:eq2}
r^2 < \frac{q(q-1)}{2(q+1)}.
\end{equation}
If $(q-1)/2 \ne r^2$ then $r(r+2) \leqs (q-1)/2$, so $r \leqs \sqrt{(q+1)/2}-1$ and \eqref{e:eq2}  holds. Now assume $(q-1)/2=r^2$, so we are interested in solutions to the Diophantine equation
\[
p^f + (-1)^f = 2r^2.
\]
If $f \geqs 5$ then \cite[Theorem 1.1]{BS} implies that $(p,f,r) = (3,5,11)$ is the only solution, whence $\b(G) = 1$ when $q = 3^5$. If $f=3$ then $(p^2+p+1)(p-1) = 2r^2$ and this possibility is quickly eliminated. Similarly, if $f=1$ then $p = 2(r^2-1)+3$ and thus $r=3$ and $p=19$, which is incompatible with our bound $q \geqs 23$. We conclude that $q=3^5$ is the only case that arises here.

Next assume $T = {\rm U}_3(q)$, so $q \geqs 3$. Set $d=(3,q+1)$ and $|G:T|=m$, so $m \leqs 2df$ and we note that $|H_1| = q^3(q^2-1)m/d$ and $|H_1|^2>|G|$ (here $H_1$ is a Borel subgroup). If $m>1$ then it is easy to check that $|H|^2 > 2df|T| \geqs |G|$, where $H = H_1 \cap T$, so we may assume $G = T$. If $d=1$ then $H = {\rm GU}_2(q)$ is a maximal subgroup of $G$ and one checks that $|H|^2>|G|$. Now assume $d=3$, so $q \geqs 5$ and $q^2-1$ is divisible by $3$. This implies that $H_1$ has a subgroup $H$ with $|H_1:H|=3$ and it is easy to check that $|H|^2>|G|$. 

Now suppose $T = {}^2B_2(q)$, so $p=2$, $f \geqs 3$ is odd and $|G:T|=m$ with $m \leqs f$. Here $H_1$ is a Borel subgroup and we have $|H_1| = q^2(q-1)m$ and $|H_1|^2>|G|$. If $m>1$ then take $H = H_1 \cap T$. Now assume $G=T$ and note that $|H_2|^2<|G|$ (see \cite[Table 8.16]{BHR}). If $q-1$ is a prime, then the largest subgroup of $H_1$ has order $q^2$ and thus $\b(G)=1$. Now assume $q-1$ is composite and let $r$ be the smallest prime divisor of $q-1$. Then $H_1$ has a subgroup $H$ of index $r$ and we have $\b(G) \geqs 2$ if and only if
\begin{equation}\label{e:eq3}
r^2 < \frac{q^2(q-1)}{q^2+1}.
\end{equation}

By arguing as above, we know that $q-1$ is divisible by at least two distinct primes. Suppose $f$ is composite and let $s$ be the smallest prime divisor of $f$. Then $s \leqs f/3$ and $2^s-1$ divides $q-1$, so $r \leqs 2^{f/3}-1$ and it is straightforward to check that the bound in \eqref{e:eq3} is satisfied. Now assume $f$ is a prime, so Fermat's Little Theorem implies that every prime divisor of $2^f-1$ is congruent to $1$ modulo $f$. If $r'$ is the second smallest prime divisor of $q-1$ then $r' \geqs r+2f$ and thus $r(r+2f) \leqs q-1$. In turn, this implies that $r \leqs \sqrt{q+f^2-1}-f$ and one can now check that the bound in \eqref{e:eq3} is satisfied.

To complete the proof for rank $1$ groups, let us assume $T = {}^2G_2(q)'$, so $p=3$, $f$ is odd and $|G:T|=m$ with $m \leqs f$. If $f = 1$ then $T \cong {\rm L}_2(8)$, so we may assume $f \geqs 3$. If we take $H$ to be an index-two subgroup of a Borel subgroup of $T$, then $|H| = q^3(q-1)/2$ and $|H|^2 >f|T| \geqs |G|$, so $\b(G) \geqs 2$.

\vs

\noindent \emph{Case 2. ${\rm rk}(T) \geqs 2$.}

\vs

Suppose $T = {\rm L}_n(q)$ with $n \geqs 3$ and set $d = (n,q-1)$. First we claim that $|H|^2>|G|$ when $H$ is a $P_1$ parabolic subgroup of $T$ (that is, $H$ is the stabilizer in $T$ of a $1$-dimensional subspace of $V$). Since $|G:T| \leqs 2df$, $|T:H| = (q^n-1)/(q-1) < q^n$ and 
\[
|T| = \frac{1}{d}q^{n(n-1)/2}\prod_{i=2}^n(q^i-1) > \frac{1}{2d}q^{n^2-1},
\]
we deduce that $|H|^2>|G|$ if $q^{n^2-n-1} > 4d^2f$. One can check that the latter bound holds unless $n=3$ and $q \in \{2,4\}$ and both of these cases can be handled directly since we have $|H| = q^3(q-1)(q^2-1)/d$. This shows that $\b(G) \geqs 1$. Similarly, for $n \geqs 4$ it is very easy to show that $|H|^2>|G|$ when $H$ is a $P_2$ maximal parabolic subgroup of $T$. For $n=3$ we take $H = P_{1,2}$ to be a Borel subgroup of $G$, so $|G:H| = (q+1)(q^2+q+1)$ and we deduce that $|H|^2>|G|$ unless $q=2$. Finally, if $(n,q) = (3,2)$ then we can take the subgroup $H = 7{:}3$ of $T$.

Next assume $T = {\rm U}_n(q)$ with $n \geqs 4$. Set $d = (n,q+1)$ and let $H = P_2$ be the stabilizer in $G$ of a totally isotropic $2$-dimensional subspace of $V$. Then 
\[
|G:H| < 2\left(\frac{q+1}{q}\right)q^{4n-12}
\]
and it is straightforward to check that $|H|^2>|G|$ since $|G|>\frac{1}{2d}q^{n^2-1}$. If $K$ is a $P_1$ parabolic subgroup of $G$, then $|K|>|H|$ and we conclude that $\b(G) \geqs 2$.

Now suppose $T = {\rm PSp}_n(q)'$ with $n \geqs 4$ even and set $d=(2,q-1)$. If $n \geqs 6$ and $H = P_2$ is the stabilizer in $G$ of a totally isotropic $2$-space, then
\[
|G:H| < 2\left(\frac{q}{q-1}\right)q^{2n-5}
\]
and the bound $|G|>\frac{1}{2d}q^{n(n+1)/2}$ implies that $|H|^2>|G|$. As in the previous case, we have $|K|>|H|$ when $K = P_1$, whence $\b(G) \geqs 2$. Now assume $n=4$. Since ${\rm PSp}_4(2)' \cong A_6$ and ${\rm PSp}_4(3) \cong {\rm U}_4(2)$, we may assume $q \geqs 4$. If $H$ is a Borel subgroup of $G$ then $|G:H| = (q^2+1)(q+1)^2$ and it is easy to check that $|H|^2 > |G|$, which implies that $\b(G) \geqs 2$. 

Similarly, if $T = {\rm P\O}_n^{\e}(q)$ is an orthogonal group with $n \geqs 7$ then it is a routine exercise to check that $|H|^2>|G|$ when $H$ is either a $P_1$ parabolic subgroup of $T$, or the stabilizer in $T$ of a nonsingular $1$-space. We omit the details.

To complete the proof, we may assume $T$ is an exceptional group of Lie type. If $G = G_2(q)'$ then we may assume $q \geqs 3$ (since $G_2(2)' \cong {\rm U}_3(3)$) and one can check that $|H|^2>|G|$ if $H$ is a Borel subgroup of $G$ (note that $|H \cap T| = q^6(q-1)^2$). For each of the remaining exceptional groups with ${\rm rk}(T) \geqs 2$, it is a straightforward exercise to exhibit two maximal parabolic subgroups of distinct orders with the desired property. The result follows. 
\end{proof}

As a corollary of Proposition \ref{p:large} (and its proof), we can determine all the special simple groups. This establishes Theorem \ref{t:special}.

\begin{thm}\label{t:spec}
Let $T$ be a nonabelian finite simple group. Then $T$ is special if and only if $\b(T)=1$.
\end{thm}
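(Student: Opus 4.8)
The goal is to prove that a nonabelian finite simple group $T$ is special if and only if $\b(T)=1$. Recall the definitions: $T$ is \emph{special} if it has a unique $\Aut(T)$-class of large subgroups, and $\b(T)$ counts the conjugacy classes (in $T$) of large core-free subgroups. Since $T$ is simple, every proper subgroup is automatically core-free, so $\b(T)$ is just the number of $T$-conjugacy classes of large subgroups.

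\vs

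\textbf{Strategy.} The natural plan is to establish both implications by comparing the two counting notions, and then to invoke the classification of groups with $\b(T)=1$ already obtained in Proposition \ref{p:large}. The essential point is to understand how $\Aut(T)$-conjugacy of large subgroups relates to $T$-conjugacy. Write $A = \Aut(T)$ and identify $T$ with $\Inn(T) \normeq A$. A single $A$-class of large subgroups splits into some number of $T$-classes, so in general $\b(T)$ counts the $T$-classes lying in all the $A$-classes of large subgroups. Thus ``$T$ has a unique $A$-class of large subgroups'' and ``$T$ has a unique $T$-class of large subgroups'' are a priori different conditions: the former could hold while $\b(T) \geqs 2$ if one $A$-class breaks into several $T$-classes, and conversely $\b(T)=1$ forces a unique $A$-class since the single $T$-class is contained in exactly one $A$-class. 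The heart of the matter is therefore to show that, for each candidate group, the unique $A$-class of large subgroups is in fact also a single $T$-class (and vice versa).

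\vs

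\textbf{Key steps.} First I would record the easy direction: if $\b(T)=1$ then there is exactly one $T$-class of large subgroups, and since $A$-classes are unions of $T$-classes, there is exactly one $A$-class of large subgroups, so $T$ is special. For the converse, suppose $T$ is special, so there is a unique $A$-class of large subgroups; I must show this class is a single $T$-class, giving $\b(T)=1$ (the inequality $\b(T)\geqs 1$ is already furnished by Proposition \ref{p:large}). The cleanest route is to observe that the special groups and the $\b(T)=1$ groups are governed by the \emph{same} list. Indeed, comparing the statement of Proposition \ref{p:large} with Table \ref{tab:special}, the groups with $\b(T)=1$ and the special groups coincide except for bookkeeping about the ambient almost simple group versus $T$ itself; in particular the simple groups appearing are $\mathrm{L}_2(q)$ with $(q-1)/d \geqs 7$ prime, ${}^2B_2(q)$ with $q-1$ prime, $\mathrm{L}_2(3^5)$, $\mathrm{J}_1$, $\mathrm{J}_3$, and $\mathbb{M}$. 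For each such $T$ I would verify that the large subgroup $H$ exhibited in Proposition \ref{p:large} (a Borel subgroup in the Lie type cases, $\mathrm{L}_2(11)$ in $\mathrm{J}_1$, $\mathrm{L}_2(16){:}2$ in $\mathrm{J}_3$, and $2.\mathbb{B}$ in $\mathbb{M}$) forms a single $T$-class: Borel subgroups are all conjugate in $T$ by the Bruhat decomposition, and in the sporadic cases the relevant maximal subgroup lies in a single class by the tables in \cite{Wilson}. Conversely, one checks from the proof of Proposition \ref{p:large} that these are the only large subgroups up to $T$-conjugacy, so $\b(T)=1$ precisely on this list.

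\vs

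\textbf{Main obstacle.} The subtle point—and the one I expect to require genuine care—is the passage between $\Aut(T)$-classes and $T$-classes, since these need not agree for a group that is special. The cleanest resolution is to show directly that the list of special groups equals the list of $\b(T)=1$ groups. In the one direction this is immediate, as noted above. In the other, I must rule out the scenario where $T$ has a unique $A$-class of large subgroups that splits into two or more $T$-classes (which would make $T$ special yet $\b(T)\geqs 2$). The key observation is that this does not occur for any of the candidate groups: for the Borel subgroups in $\mathrm{L}_2(q)$, ${}^2B_2(q)$ and $\mathrm{L}_2(3^5)$ a Borel is self-normalizing and all Borels are $T$-conjugate, so no splitting happens; and for $\mathrm{J}_1$, $\mathrm{J}_3$, $\mathbb{M}$ the relevant classes are single $T$-classes by direct inspection. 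For every \emph{other} simple group, Proposition \ref{p:large} gives $\b(T)\geqs 2$, so $T$ has at least two $T$-classes of large subgroups; I would then argue these lie in distinct $A$-classes (or, where they might fuse, that the fusion still leaves at least two $A$-classes, or simply that two non-conjugate large subgroups of distinct orders cannot be $A$-conjugate), so such a $T$ is not special. Combining these two directions yields the equivalence, and the stated characterization of the special groups via Table \ref{tab:special} follows at once from Proposition \ref{p:large}.
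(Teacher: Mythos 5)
Your proposal is correct and takes essentially the same route as the paper: the forward direction ($\b(T)=1$ implies special) is immediate since $\Aut(T)$-classes are unions of $T$-classes, and for the converse both you and the paper inspect the proof of Proposition \ref{p:large} and use the fact that every group with $\b(T)\geqs 2$ contains two large subgroups that are non-isomorphic (e.g.\ of distinct orders), hence never $\Aut(T)$-conjugate, so $T$ is not special -- this is exactly the decisive observation, since two $T$-classes can otherwise fuse under $\Aut(T)$. The only difference is cosmetic: your proposed verification that the large subgroups of the listed groups form a single $T$-class (Bruhat decomposition, etc.) is redundant, because Proposition \ref{p:large} already establishes $\b(T)=1$ for those groups.
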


\begin{proof}
If $T = {\rm Aut}(T)$, then $T$ is special if and only if $\b(T) = 1$. More generally, if $\b(T) = 1$ then $T$ is special, so we may assume $T \ne {\rm Aut}(T)$ and $\b(T) \geqs 2$. We now inspect the proof of Proposition \ref{p:large}, noting that every group with $\b(T) \geqs 2$ contains two large subgroups $H$ and $K$ with $H \not\cong K$. Clearly, $H$ and $K$ are not ${\rm Aut}(T)$-conjugate and thus $T$ is not special. The result follows.
\end{proof}

Finally, we establish Theorem \ref{t:asb2}.

\begin{proof}[Proof of Theorem \ref{t:asb2}]
Let $G$ be a finite almost simple group with socle $T$. In view of the bound $\a(G) \geqs \b(G)$, we may assume $G$ is one of the groups in Proposition \ref{p:large} with $\b(G)=1$. Let $H$ be a core-free subgroup of $G$. 

First consider the sporadic examples listed in part (iii) of Proposition \ref{p:large}. If $G = {\rm L}_2(3^5)$ then $\a(G) \geqs 2$ since $b(G,H) = 3$ when $H=3^5{:}11$ is a proper subgroup of a Borel subgroup of $G$. Similarly, if $G = {\rm J}_3{:}c$ with $c \in \{1,2\}$ then $b(G,H) = 3$ for $H = {\rm L}_{2}(16).2c$ and ${\rm L}_{2}(16).c$. Next assume $G = {\rm J}_1$. If $H$ is maximal, then the main theorem of \cite{BOW} states that $b(G,H) \geqs 3$ if and only if $H = {\rm L}_2(11)$ and it is straightforward to check (with the aid of {\sc Magma}, for example) that $b(G,K) = 2$ for every proper subgroup $K$ of ${\rm L}_{2}(11)$. Therefore, $\a(G)=1$. Similarly, $\a(G) = 1$ for $G = \mathbb{M}$ by \cite[Theorem 3.1]{B_spor}.

Next assume $G = {}^2B_2(q)$ and $q-1$ is a Mersenne prime. Let $B = U{:}T$ be a Borel subgroup of $G$ with unipotent radical $U = [q^2]$ and torus $T = C_{q-1}$. Since $|B|^2>|G|$ we have $b(G,B) \geqs 3$. If $H$ is not contained in a conjugate of $B$, then \cite[Proposition 4.40]{BLS} implies that $b(G,H) = 2$, whence $\a(G) = 1$ if and only if $b(G,H) \leqs 2$ for every proper subgroup $H$ of $B$. Since $q-1$ is a prime, every proper subgroup of $B$ is contained in $U$ or $V{:}T$, up to conjugacy, where $V$ is the unique normal subgroup of $U$ of order $q$. We claim that $b(G,H) = 2$ for $H = U$ or $V{:}T$,  which implies that $\a(G) = 1$. 

For $H = U$ this is clear since $U \cap U^g = 1$ for all $g \in G \setminus B$. Now assume $H = V{:}T$.  Let $B^g$ be another Borel
subgroup with $U \cap  U^g =1$.  By the Bruhat decomposition, we may assume that $g \in N_G(T)$ and so $B^g \cap B =T$. 
Let $u \in U$ be an element of order $4$.  Then $H^u = V{:}T^u$ and
\[
H^g \cap H^u \leqs B^g \cap B = T.
\]
Therefore, $H^g \cap H^u \leqs T \cap H^u$ and we observe that $T \cap H^u = 1$ since $T$ acts fixed point freely on $U/V$, which yields $b(G,H) = 2$. We conclude that $\a(G) = 1$ if $G = {}^2B_2(q)$ and $q-1$ is a prime.

Finally, let us assume $G = {\rm L}_2(q)$, where $(q-1)/(2,q-1) \geqs 7$ is a prime. If $q$ is even then $b(G,H) = 3$ when $H$ is a Borel subgroup and the same conclusion holds for  $H = D_{2(q+1)}$ by \cite[Lemma 4.8]{B20}. Therefore, $\a(G) \geqs 2$. Now assume $q \geqs 23$ is odd and write $q=p^f$ with $p$ a prime. Note that $q \equiv 3 \imod{4}$, so $f$ is odd. Let $B = U{:}T$ be a Borel subgroup, where $U = (C_p)^f$ is the unipotent radical and $T = C_{(q-1)/2}$ is a torus. Here $|B|^2 > |G|$ and thus $b(G,B) \geqs 3$. By combining results from \cite{B20} and \cite{BH}, we observe that $b(G,H) = 2$ if $H$ is any other maximal subgroup of $G$. More precisely, we apply \cite[Lemma 4.7]{B20} if $H = D_{q-1}$ and \cite[Lemma 4.8]{B20} for $H = D_{q+1}$. The subgroups of type $2^{1+2}_{-}.{\rm O}_{2}^{-}(2)$ are handled in \cite[Lemma 4.10]{B20}, while the subfield subgroups (noting that $f$ is odd) and $A_5$ subgroups are treated in the proof of \cite[Theorem 4.22]{BH}. Therefore, $\a(G) = 1$ if and only if $b(G,H) \leqs 2$ for every proper subgroup $H$ of $B$. 

Let $H$ be a proper subgroup of $B$. Then $H$ is contained in either $U$ or $T$, up to conjugacy, so it suffices to show that $b(G,U) = b(G,T) = 2$. Since $U \cap U^g = 1$ for all $g \in G \setminus B$ we have $b(G,U) = 2$. In addition, we have already noted that $b(G,N) =2$ for $N = N_G(T) = D_{q-1}$, whence $b(G,T) = 2$ and we conclude that $\a(G) = 1$.
\end{proof}

\section{Some general lemmas}\label{s:general}

For the remainder of the paper, we will assume $G$ is a finite group with trivial Frattini subgroup and generalized Fitting subgroup $F^*(G) = E(G)F(G)$, where $E(G)$ and $F(G)$ denote the layer of $G$ and the Fitting subgroup of $G$, respectively. Recall that the socle of $G$, denoted ${\rm soc}(G)$, is the subgroup generated by the minimal normal subgroups of $G$.

\begin{lem}\label{l:basic}
We have ${\rm soc}(G) = F^*(G) = E(G) \times F(G)$ and $E(G)$ is a direct product of nonabelian simple groups.
\end{lem}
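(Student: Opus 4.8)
The plan is to prove the lemma by combining two standard facts about the generalized Fitting subgroup with the hypothesis $\Phi(G) = 1$. Recall that $F^*(G) = E(G)F(G)$, where $E(G)$ is the layer (the product of the components of $G$) and $F(G)$ is the Fitting subgroup. I will establish the chain of equalities by showing $\soc(G) \leqs F^*(G) \leqs \soc(G)$ together with the internal direct product decompositions.

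First I would recall the well-known structural properties of $F^*(G)$ that hold for an \emph{arbitrary} finite group: the layer $E(G)$ is a central product of the components (subnormal quasisimple subgroups) of $G$, and $E(G)$ commutes with $F(G)$, so that $F^*(G) = E(G)F(G)$ with $[E(G),F(G)]=1$. The point where the hypothesis $\Phi(G)=1$ enters is in controlling $F(G)$. The key step is to observe that $\Phi(G)=1$ forces $F(G)$ to be a direct product of its elementary abelian Sylow subgroups, i.e. $F(G)$ is abelian with $F(G) = O_{p_1}(G) \times \cdots \times O_{p_t}(G)$, each $O_{p_i}(G)$ elementary abelian. Indeed $F(G) = \prod_p O_p(G)$ always, and the standard fact that $\Phi(F(G)) \leqs \Phi(G)$ (since $F(G)$ is normal in $G$) gives $\Phi(F(G)) = 1$, whence each $O_p(G)$ is elementary abelian and in particular $F(G)$ is abelian. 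This also shows $Z(E(G))$, which is a normal abelian subgroup, must be trivial: one checks $Z(E(G)) \leqs \Phi(E(G)) \leqs \Phi(G) = 1$ using that the Schur-multiplier part of a central product of quasisimple groups lies in the Frattini subgroup. Hence $E(G)$ has trivial center, the components are genuinely simple rather than merely quasisimple, and $E(G)$ is their \emph{direct} product of nonabelian simple groups. Since $E(G) \cap F(G) \leqs Z(E(G)) = 1$, the product $F^*(G) = E(G) \times F(G)$ is direct.

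Next I would identify $\soc(G)$ with $F^*(G)$. One inclusion is immediate: every minimal normal subgroup of $G$ is either an elementary abelian $p$-group, hence contained in $O_p(G) \leqs F(G)$, or a product of isomorphic nonabelian simple groups, hence a subnormal quasisimple (in fact simple) subgroup lying in $E(G)$; thus $\soc(G) \leqs F^*(G)$. For the reverse inclusion I would use that $F^*(G)$ is now a direct product of (elementary abelian $p$-groups and) nonabelian simple groups on which $G$ acts by conjugation; since $\Phi(G)=1$ forces the abelian part to decompose $G$-invariantly into minimal normal subgroups and the nonabelian part is already generated by the components, each of which is a minimal normal subgroup's constituent, one gets $F^*(G) \leqs \soc(G)$. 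More cleanly, since $F^*(G) = E(G)\times F(G)$ with both factors characteristic, and each factor is generated by minimal normal subgroups of $G$ (using that $F(G)$ is elementary abelian hence semisimple as a $G$-module, and $E(G)$ is a product of components each of which generates a minimal normal subgroup), we conclude $F^*(G) = \soc(G)$.

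The main obstacle will be the careful bookkeeping showing that $\Phi(G)=1$ propagates to triviality of $Z(E(G))$ and $\Phi(F(G))$; the cleanest route is to invoke the standard lemma that $\Phi(N) \leqs \Phi(G)$ for $N \normeq G$ (equivalently, $\Phi(G)$ contains the Frattini subgroup of every normal subgroup), applied to $N = F(G)$ and $N = E(G)$, and to recall that for a quasisimple-by-central-product group the center lies inside the Frattini subgroup. Once $Z(E(G))=1$ is secured, everything else is formal: the directness of all the products and the semisimplicity of the abelian part follow, and the identification with $\soc(G)$ is routine. I do not expect any delicate estimates here — the difficulty is purely in correctly citing and assembling the standard structure theory, and in verifying that the abelian $G$-module $F(G)$, being of trivial Frattini and therefore a sum of simple submodules, is genuinely a product of minimal normal subgroups of $G$.
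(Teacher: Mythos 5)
The paper itself disposes of this lemma with a one-line citation to \cite[Lemma 2.2]{AB}, so any self-contained proof is by necessity a different route, and most of your assembly is sound. The facts you invoke are all correct and correctly deployed: $\Phi(N) \leqs \Phi(G)$ for $N \normeq G$ gives $\Phi(F(G)) = 1$, so $F(G)$ is abelian with elementary abelian Sylow subgroups; the center of a perfect group lies in its Frattini subgroup, so $Z(E(G)) \leqs \Phi(E(G)) \leqs \Phi(G) = 1$, whence the components are simple, $E(G)$ is their direct product, and $E(G) \cap F(G) \leqs Z(E(G)) = 1$ makes $F^*(G) = E(G) \times F(G)$ direct; and $\soc(G) \leqs F^*(G)$ holds because a minimal normal subgroup is either elementary abelian (hence in $F(G)$) or a product of nonabelian simple subnormal subgroups, i.e.\ of components (hence in $E(G)$).

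The genuine gap is the reverse inclusion $F(G) \leqs \soc(G)$. You justify it by asserting that $F(G)$ is ``elementary abelian hence semisimple as a $G$-module'' (equivalently, ``of trivial Frattini and therefore a sum of simple submodules''). That inference is false: an elementary abelian normal $p$-subgroup need not be semisimple as a $G$-module. For instance, in $G = (C_p)^2{:}C_p$ with the generator acting as a transvection, $V = (C_p)^2$ is elementary abelian and normal but has a unique minimal $G$-submodule. Such examples necessarily have $\Phi(G) \ne 1$, which is exactly the point: semisimplicity is where the hypothesis $\Phi(G)=1$ must do its work for the abelian part, and your argument never actually brings it to bear there. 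The missing tool is Gasch\"{u}tz's complementation theorem (which the paper itself quotes in Section \ref{s:general} as \cite[Lemma 2.1(vii)]{AB}): if $N \leqs F(G)$ is normal in $G$, then $N$ is abelian with $N \cap \Phi(G) = 1$, so $G = N{:}H$ for some $H \leqs G$; then $F(G) \cap H$ is normalized by $H$ and centralized by the abelian group $F(G) \geqs N$, hence is normal in $G = NH$, and Dedekind gives $F(G) = N \times (F(G) \cap H)$. Thus every $G$-submodule of $F(G)$ is a direct summand, so $F(G)$ is a semisimple $G$-module and hence a product of minimal normal subgroups of $G$. With this step supplied (together with the routine observation, which you do gesture at, that the normal closure of a component is a direct product of simple groups transitively permuted by $G$ and is therefore a minimal normal subgroup, giving $E(G) \leqs \soc(G)$), your proof is complete.
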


\begin{proof}
This is straightforward (see \cite[Lemma 2.2]{AB}, for example).
\end{proof}

We will repeatedly apply the following elementary observation.

\begin{lem}\label{l:basic2}
Let $N$ be a normal subgroup of $G$ and let $H$ be a proper subgroup of $N$ with $|H|^2 > |N|$. If $H$ is core-free, then $b(G,H) \geqs 3$. In particular, $b(G,H) \geqs 3$ if $N$ is a minimal normal subgroup of $G$.
\end{lem}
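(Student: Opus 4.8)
The plan is to reduce everything to the elementary counting argument already used in Remark \ref{r:b2}, exploiting the single structural fact that $N$ is normal in $G$. Recall that $b(G,H) \geqs 3$ means that no two conjugates of $H$ intersect trivially (together with $H \ne 1$, which excludes $b(G,H)=1$). So first I would observe that since $N \normeq G$ we have $H^g \leqs N^g = N$ for every $g \in G$; hence any pair of conjugates $H^{g_1}, H^{g_2}$ lies inside $N$, and in particular the product set $H^{g_1}H^{g_2}$ is contained in $N$.

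Next I would run the double-counting of Remark \ref{r:b2}, but now relative to $N$ rather than $G$. Using $|H^{g_1}H^{g_2}| = |H|^2/|H^{g_1}\cap H^{g_2}|$ together with $H^{g_1}H^{g_2} \subseteq N$, the hypothesis $|H|^2 > |N|$ gives
\[
|H^{g_1}\cap H^{g_2}| = \frac{|H|^2}{|H^{g_1}H^{g_2}|} \geqs \frac{|H|^2}{|N|} > 1
\]
for every choice of $g_1,g_2 \in G$. Thus no two conjugates of $H$ meet trivially. Since $H$ is proper in $N$ we have $|H|^2 > |N| > |H|$, so $|H| > 1$ and a single conjugate is never trivial; this rules out $b(G,H) = 1$ and forces $b(G,H) \geqs 3$, establishing the first assertion. (The core-free hypothesis is what makes $b(G,H)$ well defined as a base size, but it plays no further role in this estimate.)

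For the \emph{in particular} clause I would only need to verify that core-freeness is automatic when $N$ is minimal normal, after which the first part applies verbatim. The core of $H$ in $G$ is a normal subgroup of $G$ contained in $H$, hence contained in $N$; as $H$ is proper in $N$, this core is a proper subgroup of the minimal normal subgroup $N$, so minimality forces it to be trivial. Therefore every proper $H \leqs N$ with $|H|^2 > |N|$ is core-free, and the conclusion $b(G,H) \geqs 3$ follows.

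I do not expect any genuine obstacle here: the entire argument is the order estimate of Remark \ref{r:b2} transplanted from $G$ to the normal subgroup $N$, the crucial input being that conjugation by $G$ preserves $N$. The only points requiring a moment's attention are the bookkeeping guaranteeing $H \ne 1$ (so that $b(G,H) \ne 1$) and the short minimality argument supplying core-freeness in the final clause.
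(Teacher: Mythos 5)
Your proof is correct and follows essentially the same argument as the paper: both reduce to the product-formula estimate $|H^{g_1}\cap H^{g_2}| \geqs |H|^2/|N| > 1$, using normality of $N$ to confine all conjugates of $H$ (and their products) inside $N$. Your explicit verification that core-freeness is automatic when $N$ is minimal normal, and that $|H|>1$ rules out $b(G,H)=1$, simply makes precise details the paper leaves implicit.
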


\begin{proof}
If $g \in G$ then $H \cap H^g \leqs N$ and so the bound $|H|^2 > |N|$ implies that $H \cap H^g \ne 1$. Therefore, $b(G,H) \geqs 3$ if $H$ is core-free.
\end{proof}

We now present a sequence of lemmas with the aim of obtaining strong restrictions on the structure of the socle of a strongly base-two group. In the proofs, we will repeatedly apply Lemma \ref{l:basic2}.

\begin{lem}\label{l:1}
Suppose $E(G) \ne 1$. Then $\a(G) \geqs 1$, with equality only if 
\[
F^*(G) = T \times C_{p_1} \times \cdots \times C_{p_k},
\]
where $T$ is a special simple group, $k \geqs 0$ and each $C_{p_i}$ is a minimal normal subgroup of $G$.  
\end{lem}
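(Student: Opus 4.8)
The plan is to combine the two elementary tools already in place — the existence of large subgroups (Theorem~\ref{t:fkl}, Proposition~\ref{p:large}) and the base-size criterion of Lemma~\ref{l:basic2} — with the structural input of Lemma~\ref{l:basic}, which gives $\soc(G)=F^*(G)=E(G)\times F(G)$ with $E(G)$ a direct product of nonabelian simple groups and $F(G)$ a direct product of \emph{abelian}, hence elementary abelian, minimal normal subgroups of $G$. The guiding strategy is that any deviation from the asserted structure should produce a \emph{second} conjugacy class of core-free subgroups of base size $\geqs 3$, forcing $\a(G)\geqs 2$. First I would establish $\a(G)\geqs 1$: since $E(G)\ne 1$, fix a nonabelian minimal normal subgroup $N=T^{(1)}\times\cdots\times T^{(r)}$ of $G$ with $T=T^{(1)}$ simple; by Theorem~\ref{t:fkl} the group $T$ has a large subgroup $L$, so $H=L\times T^{(2)}\times\cdots\times T^{(r)}$ is a proper subgroup of $N$ with $|H|^2=|L|^2|T|^{2(r-1)}>|T|^{2r-1}\geqs |N|$. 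As $N$ is minimal normal, every proper subgroup of $N$ is core-free, whence Lemma~\ref{l:basic2} gives $b(G,H)\geqs 3$ and $\a(G)\geqs 1$.

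For the equality case I would run a chain of reductions, each time exhibiting two non-conjugate core-free subgroups of base size $\geqs 3$ when the structure fails. \emph{(i)} If $E(G)$ had two distinct nonabelian minimal normal subgroups $N_1,N_2$, then large subgroups $H_1\leqs N_1$ and $H_2\leqs N_2$ built as above are non-conjugate, since $H_1^g\leqs N_1$ for all $g$ while $H_2\ne 1$ is not contained in $N_1$; hence $E(G)=N$ is a single minimal normal subgroup. \emph{(ii)} If $N=T^{(1)}\times\cdots\times T^{(r)}$ with $r\geqs 2$, I would compare $H_1=L^{(1)}\times T^{(2)}\times\cdots\times T^{(r)}$ with $H_2=L^{(1)}\times L^{(2)}\times T^{(3)}\times\cdots\times T^{(r)}$; a short order estimate shows both are large in $N$, while the number of coordinates on which a subproduct subgroup projects onto the full factor is a conjugacy invariant (conjugation by $G$ merely permutes the $T^{(i)}$), so $H_1$ and $H_2$ are non-conjugate. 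Thus $r=1$ and $E(G)=T$ is simple. \emph{(iii)} If $T$ were not special, then $\b(T)\geqs 2$ by Theorem~\ref{t:spec}, and its proof furnishes two non-isomorphic large subgroups $H\not\cong K$ of $T$; both are core-free of base size $\geqs 3$ and are non-conjugate, so $T$ must be special.

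It remains to treat $F(G)=M_1\times\cdots\times M_k$, where each $M_i$ is an elementary abelian minimal normal subgroup of $G$ by Lemma~\ref{l:basic}. If some $M=M_i$ had order $>p_i$, I would take a large subgroup $L\leqs T$ together with a subgroup $M_0<M$ of index $p_i$, so that $|M_0|^2\geqs|M|$, and set $H=L\times M_0\leqs TM$, where $TM\normeq G$. Then $|H|^2=|L|^2|M_0|^2>|T|\,|M|=|TM|$ yields $b(G,H)\geqs 3$ via Lemma~\ref{l:basic2}, while $|H|=|L|\,|M_0|>|L|$ shows $H$ is not conjugate to the large subgroup $L\leqs T$; this gives a second class and again forces $\a(G)\geqs 2$. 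Hence each $M_i\cong C_{p_i}$, and $F^*(G)=T\times C_{p_1}\times\cdots\times C_{p_k}$ as claimed.

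The recurring technical point, and the step I expect to need the most care, is verifying that the product-type subgroups $L\times M_0$ (and the factor subgroups in (i), (ii)) are genuinely core-free. For $H=L\times M_0\leqs T\times M$ one argues that any $D\normeq G$ with $D\leqs H$ has $\pi_T(D)\normeq G$ and $\pi_T(D)\leqs L<T$, so $\pi_T(D)=1$ by minimality of $T$, giving $D\leqs M_0\subsetneq M$ and then $D=1$ by minimality of $M$. I would also emphasize that the genuinely delicate issue one might fear — that $F(G)$ could fail to be a direct product of minimal normal subgroups, i.e. fail to be a semisimple $G$-module — does not arise, because $\Phi(G)=1$ forces $\soc(G)=F^*(G)$ in Lemma~\ref{l:basic}. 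Without this identity the last paragraph would be substantially harder, since one would then have to rule out indecomposable $O_p(G)$ with large socle by hand, a case where the size bound in Lemma~\ref{l:basic2} is not easily met.
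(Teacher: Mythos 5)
Your proposal is correct and follows essentially the same route as the paper's proof: the same subgroup constructions (a large subgroup $J$ of one simple factor, $J\times T^{m-1}$ versus $J^2\times T^{m-2}$ when a nonabelian minimal normal subgroup has several factors, large subgroups inside two distinct nonabelian minimal normal subgroups, ${\rm Aut}(T)$-inequivalent large subgroups when $T$ is non-special, and $J$ versus $J\times M_0$ with $M_0$ a hyperplane of an abelian minimal normal subgroup of rank at least $2$), all fed into Lemma \ref{l:basic2}. One small correction: the existence of a large subgroup of $T$ should be cited to Proposition \ref{p:large} rather than Theorem \ref{t:fkl}, since the latter only gives a proper subgroup $H$ with $|H|^2 \geqslant |T|$, whereas your strict inequalities (and the hypothesis of Lemma \ref{l:basic2}) require $|H|^2 > |T|$.
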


\begin{proof}
Let $N_1 = T^m$ be a minimal normal subgroup of $G$, where $T$ is a nonabelian simple group and $m \geqs 1$. In view of Proposition \ref{p:large}, let $J$ be a large subgroup of $T$. 

Suppose $m \geqs 2$ and consider the proper subgroups $H_1 = T^{m-1} \times J$ and $H_2 = T^{m-2} \times J^2$ of $N_1$. Then
\[
|H_i|^2 \geqs |T|^{2m-4}|J|^4 > |T|^{2m-2} \geqs |N_1|
\]
and thus $b(G,H_i) \geqs 3$ by Lemma \ref{l:basic2}, so $\a(G) \geqs 2$ and for the remainder we may assume $m=1$. Next suppose $N_2=S$ is another minimal normal subgroup of $G$, where $S$ is a nonabelian simple group. Fix a large subgroup $K$ of $S$ and set $H_1 = J$ and $H_2 = K$. Then the $H_i$ are core-free and non-conjugate, with $b(G,H_i) \geqs 3$ by Lemma \ref{l:basic2}, so $\a(G) \geqs 2$.

We have now reduced to the case where $E(G) = N_1 = T$ is simple. Note that $b(G,J) \geqs 3$ by Lemma \ref{l:basic2}, so $\a(G) \geqs 1$. Suppose $T$ 
is non-special and choose a large subgroup $K$ of $T$ such that $J,K$ are not ${\rm Aut}(T)$-conjugate. Then $b(G,K) \geqs 3$ and $J,K$ are non-conjugate in $G$ since $T$ is normal in $G$, whence $\a(G) \geqs 2$. Finally, let us assume $T$ is special and $G$ has a minimal normal subgroup $N_2 =(C_p)^{a}$ with $a \geqs 2$. If we set $H_1 = J$ and $H_2 = J \times (C_p)^{a-1}$, then $|H_1|^2 > |N_1|$ and $|H_2|^2 > |N_1||N_2|$, so the $H_i$ are core-free subgroups with $b(G,H_i) \geqs 3$ and thus $\a(G) \geqs 2$.
\end{proof}

\begin{lem}\label{l:3}
Suppose $G$ has a minimal normal subgroup $N = (C_p)^a$ with $a \geqs 5$. Then $\a(G) \geqs 2$. 
\end{lem}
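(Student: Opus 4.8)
The plan is to apply Lemma~\ref{l:basic2} directly, taking the two required core-free subgroups to be proper subspaces of $N$ of suitable dimensions. First I would note that, since $N$ is a minimal normal subgroup of $G$, its only $G$-invariant subgroups are $1$ and $N$; here the conjugation action of $G$ on the abelian group $N$ is exactly its action as a $G$-module. It follows that every proper subgroup $H < N$ is core-free in $G$, since the core $\bigcap_{g \in G} H^g$ is a $G$-invariant subgroup contained in $H \ne N$ and hence trivial.

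Next I would view $N = (C_p)^a$ as an $a$-dimensional $\mathbb{F}_p$-vector space and observe that a subspace $H$ of dimension $b$ is proper precisely when $b < a$, and satisfies $|H|^2 > |N|$ precisely when $2b > a$. Thus for any integer $b$ with $a/2 < b < a$, Lemma~\ref{l:basic2} yields $b(G,H) \geqs 3$ for a $b$-dimensional subspace $H$. Since conjugation in $G$ preserves dimension, subspaces of distinct dimensions cannot be $G$-conjugate, so it is enough to produce two admissible values of $b$ and to take corresponding subspaces.

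The only genuinely arithmetic step, which I expect to be the sole (and very minor) obstacle, is to verify that the open interval $(a/2, a)$ contains at least two integers whenever $a \geqs 5$. Writing $a = 2m$ or $a = 2m+1$, the admissible integers $b$ are $m+1, \ldots, a-1$, numbering $m-1$ or $m$ respectively, which is at least $2$ exactly when $a \geqs 5$; note that $a = 4$ admits only the single value $b = 3$, which explains why the hypothesis $a \geqs 5$ is needed. Choosing distinct admissible values $b_1 \ne b_2$ and corresponding subspaces $H_1, H_2 < N$ then gives two non-conjugate core-free subgroups with $b(G, H_i) \geqs 3$, so $\a(G) \geqs 2$, as required.
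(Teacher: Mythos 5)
Your proof is correct and follows essentially the same route as the paper: the paper's proof simply takes $H_1 = (C_p)^{a-1}$ and $H_2 = (C_p)^{a-2}$ inside $N$ and applies Lemma~\ref{l:basic2}, which is exactly your choice of two admissible dimensions (note that the ``in particular'' clause of Lemma~\ref{l:basic2} already covers core-freeness when $N$ is minimal normal, so your explicit core-freeness argument, while correct, is not needed). The arithmetic observation that $a \geqs 5$ guarantees two admissible dimensions is the same point the paper leaves implicit.
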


\begin{proof}
We can take subgroups $H_1 = (C_p)^{a-1}$ and $H_2 = (C_p)^{a-2}$ in $N$, noting that $|H_i|^2 > |N|$ for $i=1,2$. 
\end{proof}

\begin{lem}\label{l:4}
Suppose $G$ has minimal normal subgroups $N_1 = (C_p)^a$ and $N_2 = (C_q)^b$, where $a \geqs 3$ and $b \geqs 2$. Then $\a(G) \geqs 2$. 
\end{lem}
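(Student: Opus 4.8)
The plan is to exhibit two core-free subgroups $H_1,H_2$ of $G$ that are not conjugate and each satisfy $b(G,H_i)\geqs 3$; by Definition~\ref{d:sb2} this yields $\a(G)\geqs 2$. In each case the base-size bound will come directly from Lemma~\ref{l:basic2}. Observe first that $N_1\cap N_2$ is a $G$-normal subgroup contained in each $N_i$, so by minimality it is trivial; hence $N:=N_1N_2=N_1\times N_2$ is a normal subgroup of $G$ of order $p^aq^b$. I regard $N_1$ and $N_2$ as irreducible modules over $\mathbb{F}_p$ and $\mathbb{F}_q$ respectively (minimal normal $=$ irreducible), and I record the standard fact that, for a subgroup contained in the abelian normal subgroup $N$, being core-free in $G$ is equivalent to containing no nonzero $G$-submodule of $W:=N_1\oplus N_2$.

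For the first subgroup I would take $H_1=(C_p)^{a-1}$, a subgroup of index $p$ in $N_1$. Since $N_1$ is minimal normal, its only $G$-normal subgroups are $1$ and $N_1$, and as $H_1\ne N_1$ it is core-free. Moreover $|H_1|^2=p^{2a-2}>p^a=|N_1|$ precisely because $a\geqs 3$, so Lemma~\ref{l:basic2} gives $b(G,H_1)\geqs 3$.

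For the second subgroup I would take $H_2=U_1\times U_2<N$, where $U_1=(C_p)^{a-1}$ has index $p$ in $N_1$ and $U_2=(C_q)^{b-1}$ has index $q$ in $N_2$. The size bound is immediate, since $|H_2|^2/|N|=p^{a-2}q^{b-2}\geqs p\geqs 2$ using $a\geqs 3$ and $b\geqs 2$; thus $|H_2|^2>|N|$, and Lemma~\ref{l:basic2} applied to the normal subgroup $N$ will give $b(G,H_2)\geqs 3$ once $H_2$ is known to be core-free. This is the step I expect to require the most care: when $p=q$ and $N_1\cong N_2$ as $G$-modules, the module $W$ carries ``diagonal'' irreducible submodules which a priori might lie inside $H_2$. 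I would dispose of this uniformly by a projection argument. Suppose $U$ is a nonzero $G$-submodule of $W$ with $U\leqs H_2$. Its image under the projection $W\to N_1$ is a $G$-submodule of the irreducible module $N_1$ that is contained in $U_1\subsetneq N_1$, hence is zero, so $U\leqs N_2$; by the symmetric argument $U\leqs N_1$, forcing $U\leqs N_1\cap N_2=1$, a contradiction. Therefore $H_2$ contains no nonzero $G$-submodule and is core-free. Note that this argument needs neither a case split on whether $p=q$ nor any genericity in the choice of $U_1,U_2$.

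Finally I would verify that $H_1$ and $H_2$ are not $G$-conjugate by comparing orders: $|H_1|=p^{a-1}$ while $|H_2|=p^{a-1}q^{b-1}$. If $p\ne q$ then $q\mid |H_2|$ but $q\nmid |H_1|$ since $b-1\geqs 1$; if $p=q$ then $|H_2|=p^{a+b-2}$ with $a+b-2>a-1$ because $b\geqs 2$. In either case $|H_1|\ne|H_2|$, so $H_1$ and $H_2$ lie in distinct conjugacy classes of core-free subgroups with base size at least $3$, giving $\a(G)\geqs 2$.
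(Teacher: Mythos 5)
Your proposal is correct and follows essentially the same route as the paper: the paper takes exactly the same two subgroups $H_1 = (C_p)^{a-1} < N_1$ and $H_2 = (C_p)^{a-1} \times (C_q)^{b-1} < N_1 \times N_2$ and applies Lemma \ref{l:basic2} to their normal closures $N_1$ and $N_1 \times N_2$. Your projection argument for core-freeness and the order comparison for non-conjugacy are just explicit verifications of facts the paper leaves implicit (the paper instead notes that the two normal closures differ, which also rules out conjugacy).
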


\begin{proof}
Set $H_1 = (C_p)^{a-1}$ and $H_2 = (C_p)^{a-1} \times (C_q)^{b-1}$, with respective normal closures $N_1$ and $N_1 \times N_2$, respectively. Then $|H_1|^2 > |N_1|$, $|H_2|^2 > |N_1||N_2|$ and each $H_i$ is core-free. Therefore, $b(G,H_i) \geqs 3$ and thus $\a(G) \geqs 2$ as required.
\end{proof}

\begin{lem}\label{l:5}
Suppose $G$ has minimal normal subgroups $N_1 = (C_p)^a$ and $N_2 = C_p$, where $a \geqs 3$. Then $\a(G) \geqs 2$. 
\end{lem}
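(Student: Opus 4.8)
The plan is to exhibit two non-conjugate core-free subgroups $H_1,H_2$ of $G$ with $b(G,H_i)\geq 3$, so that $\a(G)\geq 2$ by definition. The main tool will be Lemma \ref{l:basic2}, which guarantees $b(G,H)\geq 3$ for any core-free proper subgroup $H$ of a normal subgroup $N$ satisfying $|H|^2>|N|$. Following the pattern of Lemmas \ref{l:3} and \ref{l:4}, I would first take $H_1=(C_p)^{a-1}<N_1$. This is core-free because its $G$-core lies inside the minimal normal subgroup $N_1$ and is proper, hence trivial; and since $a\geq 3$ we have $|H_1|^2=p^{2a-2}>p^a=|N_1|$, so $b(G,H_1)\geq 3$.

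The hard part is the second subgroup, and this is precisely where the hypothesis differs from Lemma \ref{l:4}: there $N_2$ had rank $b\geq 2$, leaving room to shrink it, whereas here $N_2=C_p$ has rank $1$, so any subgroup of the normal subgroup $M:=N_1N_2=N_1\times N_2$ that contains all of $N_2$ will contain a minimal normal subgroup of $G$ and thus fail to be core-free. To get around this I would pass to the module picture, regarding $M\cong(\mathbb{F}_p)^{a+1}$ as a $G$-module, and build a diagonal hyperplane of the right size. The key structural input is that $N_1$ and $N_2$ are irreducible $G$-submodules of $M$ of dimensions $a\geq 3$ and $1$, hence non-isomorphic; consequently the only $G$-submodules of $M$ are $0$, $N_1$, $N_2$ and $M$. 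Fixing a basis $e_1,\dots,e_a$ of $N_1$ and a generator $f$ of $N_2$, I would then set $H_2=\langle e_1+f,\,e_2,\dots,e_a\rangle$, a subgroup of $M$ of order $p^a$.

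It then remains to verify three things. First, $H_2$ is proper in the normal subgroup $M$ and $|H_2|^2=p^{2a}>p^{a+1}=|M|$. Secondly --- the point requiring care --- $H_2$ is core-free: its core is a $G$-submodule of $M$ contained in $H_2$, and of the four submodules listed only $0$ lies in $H_2$, since $f\notin H_2$ excludes $N_2$, $e_1\notin H_2$ excludes $N_1$, and $M\not\subseteq H_2$ on dimension grounds. Lemma \ref{l:basic2} then gives $b(G,H_2)\geq 3$. Finally $|H_1|=p^{a-1}\neq p^a=|H_2|$, so $H_1$ and $H_2$ cannot be conjugate, and we conclude $\a(G)\geq 2$. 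I expect the one genuinely delicate step to be the core-freeness of $H_2$, which hinges on ruling out diagonal submodules of $M$; this is exactly where the non-isomorphism $N_1\not\cong N_2$ (forced by $a\geq 3$) is essential, since if the two minimal normal subgroups were isomorphic $G$-modules then $M$ would contain additional submodules that a diagonal hyperplane might absorb.
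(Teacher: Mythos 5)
Your proposal is correct and follows essentially the same route as the paper: take $H_1 = (C_p)^{a-1} < N_1$ and a diagonal hyperplane $H_2$ of order $p^a$ in $N_1 \times N_2$ with normal closure $N_1 \times N_2$, then apply Lemma \ref{l:basic2} to each and conclude non-conjugacy from the distinct orders. Your explicit verification of the core-freeness of $H_2$ (via the submodule lattice of $N_1 \oplus N_2$, using $N_1 \not\cong N_2$) simply fills in a step the paper declares to be clear.
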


\begin{proof}
Let $H_1$ and $H_2$ be subgroups of $N_1 \times N_2$ of order $p^{a-1}$ and $p^a$, respectively, where $H_1<N_1$ and $H_2$ has normal closure $N_1 \times N_2$. Then the $H_i$ are core-free and it is clear that $b(G,H_i) \geqs 3$, so $\a(G) \geqs 2$ as required.
\end{proof}

\begin{lem} \label{l:6}  
Suppose $G$ has minimal normal subgroups $N_i = (C_p)^{a_i}$ for $i=1,2,3$, where $a_1 \geqs 2$. Then $\a(G) \geqs 2$.
\end{lem}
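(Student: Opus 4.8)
The plan is to exhibit two non-conjugate core-free subgroups of $G$ with base size at least $3$, which forces $\a(G) \geqs 2$. As in the proofs of Lemmas \ref{l:3}--\ref{l:5}, the natural source of such subgroups is the elementary abelian normal subgroup $M = N_1N_2N_3$, which I regard as an $\mathbb{F}_pG$-module with irreducible submodules $N_1, N_2, N_3$. Recall that a subgroup $H \leqs M$ is core-free precisely when it contains no nonzero $G$-submodule of $M$, and that by Lemma \ref{l:basic2} we have $b(G,H) \geqs 3$ as soon as $H$ is core-free and $2\dim H > \dim\langle H^G\rangle$, where $\langle H^G\rangle$ is the normal closure of $H$ (equivalently, the $G$-submodule of $M$ it generates). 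Since distinct minimal normal subgroups intersect trivially, $N_1N_2 = N_1 \times N_2$ has $\mathbb{F}_p$-dimension $a_1 + a_2 \geqs 3$ because $a_1 \geqs 2$, and in the main (independent) case $M = N_1 \times N_2 \times N_3$ has dimension $a_1 + a_2 + a_3 \geqs 4$.

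With this set-up I would construct $H_1 \leqs N_1 \times N_2$ with $\langle H_1^G\rangle = N_1 \times N_2$ and $\dim H_1 = a_1 + a_2 - 1$, and $H_2 \leqs M$ with $\langle H_2^G\rangle = M$ and $\dim H_2 = a_1 + a_2 + a_3 - 1$. The dimension inequality needed for Lemma \ref{l:basic2} holds in each case: writing $d = \dim\langle H_i^G\rangle$, one has $d \geqs 3$ and $2(d-1) > d$, so $|H_i|^2 > |\langle H_i^G\rangle|$ and hence $b(G,H_i) \geqs 3$. Moreover $|H_1| = p^{a_1+a_2-1}$ and $|H_2| = p^{a_1+a_2+a_3-1}$ differ because $a_3 \geqs 1$, so $H_1$ and $H_2$ are non-conjugate simply on grounds of order. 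This gives $\a(G) \geqs 2$ and completes the argument in the generic case.

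To produce each $H_i$ I would take a codimension-one subspace of the relevant module that generates it as a $G$-module and contains no nonzero $G$-submodule. Concretely, I would start from a hyperplane $L$ of $N_1$ (of dimension $a_1 - 1 \geqs 1$), adjoin hyperplanes of $N_2$ (and of $N_3$), and add a single diagonal vector with nonzero component in each relevant factor; this yields the prescribed dimensions, and the diagonal vector guarantees that the normal closure is the full module rather than a proper submodule. Core-freeness is then the requirement that no such configuration contains an irreducible submodule.

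The main obstacle is precisely this last point: verifying that a near-maximal core-free subspace with the desired normal closure really exists. When $N_1, N_2, N_3$ are pairwise non-isomorphic as $G$-modules this is immediate, since the only submodules to avoid are the $N_i$ themselves and a hyperplane misses each of them by a suitable choice of $L$ and the diagonal vector. The delicate situation is the isotypic one, where $N_1 \cong N_2 \cong N_3$ and $M$ carries many diagonal submodules (the $\mathbb{F}_p[\operatorname{End}_G(N_1)]$-lines); here one must choose the hyperplanes and the diagonal vector so as to avoid every such line, and one must also confirm that the three subgroups are genuinely independent, i.e. that $N_3 \not\leqs N_1 \times N_2$, since otherwise both the normal-closure and the order distinction used above degenerate. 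This isotypic analysis is where essentially all the work lies, and it is the step I expect to be hardest to make airtight.
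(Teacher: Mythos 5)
Your generic-case argument is correct and is essentially the paper's own proof: the paper takes core-free index-$p$ subgroups $H_1 < N_1 \times N_2$ and $H_2 < N_1 \times N_3$, implicitly distinguishing them by their normal closures, while you take $H_2$ inside $N_1N_2N_3$ and distinguish by order; both rest on Lemma \ref{l:basic2}. (One small slip: your ``concrete'' $H_2$ --- hyperplanes in all three factors plus a single diagonal vector --- has codimension $2$, not $1$; take instead the kernel of a functional $(\lambda_1,\lambda_2,\lambda_3)$ with every $\lambda_i \ne 0$.)

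The isotypic case that you flag and leave open is a genuine gap, and in fact it cannot be closed, because the statement is false there; so your suspicion is exactly right. Let $E = \mathbb{F}_4$, $V = E^3$ and $G = V{:}E^{\times} \cong (C_2)^6{:}C_3$, with $E^{\times}$ acting by scalar multiplication. Then $\Phi(G) = 1$ and the minimal normal subgroups of $G$ are precisely the $E$-lines of $V$, each isomorphic to $(C_2)^2$, so the hypotheses of Lemma \ref{l:6} hold with $a_1 = a_2 = a_3 = 2$, even with $N_1, N_2, N_3$ chosen independent (the coordinate lines). But every index-$2$ subgroup $H$ of $N_1 \times N_2 \cong E^2$ contains an $E$-line: over $\mathbb{F}_2$ we have $\dim (H \cap \omega H) \geq 3+3-4 > 0$, and if $0 \ne w \in H \cap \omega H$ then $w, \omega^2 w \in H$, hence $\omega w = w + \omega^2 w \in H$ and $Ew \leq H$. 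So neither your $H_1$ nor the paper's exists. Indeed $\a(G) = 0$ here: a core-free subgroup is either a subgroup of $V$ containing no $E$-line, and then $H \cap H^{g} = 1$ for $g$ inducing $\omega$ by the computation just given, or is (up to conjugacy) the complement $E^{\times}$, which is self-normalizing of prime order; in both cases the base size is $2$. So the gap is not yours alone: the paper's one-line proof asserts the existence of the core-free hyperplanes without justification, and that assertion is precisely what fails. The lemma is true, and your argument (like the paper's) goes through, under an extra hypothesis that holds wherever the paper applies it: in Theorems \ref{t:c5} and \ref{t:c4} the $N_i$ are direct summands of a \emph{cyclic} $G$-module, which forces any isotypic multiplicity $m \geq 2$ to satisfy $m \leq \dim_{\operatorname{End}_G(N_i)} N_i$, and then a counting argument over the dual space does produce core-free hyperplanes avoiding all diagonal submodules. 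Without some such hypothesis (pairwise non-isomorphic $N_i$, as in Lemma \ref{l:7}, also suffices), no completion of the proof is possible.
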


\begin{proof}  
There exist core-free subgroups $H_1 < N_1 \times N_2$ and $H_2 < N_1 \times N_3$ of index $p$, whence $b(G,H_i) \geqs 3$ and the result follows.
\end{proof}

Note that if $G$ has an abelian minimal normal subgroup $N = (C_p)^a$, then we may view $N$ as an irreducible $a$-dimensional $G$-module over the field $\mathbb{F}_p$.

\begin{lem} \label{l:7}  
Suppose $G$ has minimal normal subgroups $N_i = (C_p)^{a_i}$ for $i=1,2,3$, which are pairwise non-isomorphic as $G$-modules. Then $\a(G) \geqs 1$, with equality only if $|O_p(G)| = p^3$ and $a_i=1$ for all $i$.
\end{lem}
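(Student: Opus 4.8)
The plan is to realise everything inside the elementary abelian normal subgroup $P=O_p(G)$, viewed as a semisimple $\mathbb{F}_pG$-module: since $P\leqs {\rm soc}(G)$ is a product of (abelian) minimal normal subgroups, it is a direct sum of irreducible submodules, and $N_1,N_2,N_3$ are three of these. As they are pairwise non-isomorphic, their sum $W=N_1\oplus N_2\oplus N_3$ is direct and multiplicity-free, of dimension $a_1+a_2+a_3\geqs 3$. For the lower bound I would produce a single core-free hyperplane of $W$: writing $W^*=N_1^*\oplus N_2^*\oplus N_3^*$, choose $f\in W^*$ with $f|_{N_i}\ne 0$ for each $i$, and set $H=\ker f$. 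Then $H$ contains no $N_i$; since $W$ is multiplicity-free its only irreducible submodules are $N_1,N_2,N_3$, so every nonzero $G$-submodule of $W$ (in particular the normal core of $H$, which is a $G$-submodule of $P$ lying in $H$) contains some $N_i$, forcing $H$ to be core-free. As $\dim W\geqs 3$ we have $|H|^2>|W|$, and Lemma \ref{l:basic2} gives $b(G,H)\geqs 3$, whence $\a(G)\geqs 1$.

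For the equality clause I would show that $\a(G)\geqs 2$ as soon as one of the stated conditions fails. First, if some $a_i\geqs 2$, then after relabelling $a_1\geqs 2$ and Lemma \ref{l:6} immediately yields $\a(G)\geqs 2$. So I may assume $a_1=a_2=a_3=1$; then $W\leqs P$ has order $p^3$, so $|O_p(G)|\geqs p^3$, and it remains to prove that $|O_p(G)|>p^3$ forces $\a(G)\geqs 2$.

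Assume $\dim P\geqs 4$. Using semisimplicity I would pick a $G$-complement to $W$ in $P$ and an irreducible submodule $N_4$ inside it, so that $N_4\ne 0$ and $N_4\cap W=0$. Its character $\psi$ agrees with at most one of the distinct characters $\chi_1,\chi_2,\chi_3$ of $N_1,N_2,N_3$, so after relabelling I may assume $\psi\notin\{\chi_1,\chi_2\}$. Then $W_2=N_1\oplus N_2\oplus N_4$ is again multiplicity-free, and the construction above produces a core-free hyperplane $H_2$ of $W_2$ with $b(G,H_2)\geqs 3$. To see that $H$ and $H_2$ are non-conjugate I would compute normal closures: a hyperplane of a multiplicity-free rank-$3$ module with $1$-dimensional constituents that contains none of its three lines lies in no $N_i\oplus N_j$, so its normal closure is the whole module; thus $\langle H\rangle_G=W$ and $\langle H_2\rangle_G=W_2$. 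Finally $W\ne W_2$ because $N_3\subseteq W$ while $N_3\not\subseteq W_2$ (the $\chi_3$-part of $W_2$ is $0$ if $\psi\ne\chi_3$, and equals $N_4\ne N_3$ if $\psi=\chi_3$). Since conjugate subgroups have equal normal closures, $H$ and $H_2$ lie in different $G$-classes, giving $\a(G)\geqs 2$ and completing the equality analysis.

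The main obstacle is guaranteeing core-freeness. The clean hyperplane construction only works inside a \emph{multiplicity-free} module: if an isotypic component has dimension $\geqs 2$, then every hyperplane meets it in a subspace of positive dimension and hence contains an irreducible --- that is, a minimal normal --- submodule, so no hyperplane can be core-free. This is exactly why one cannot simply take a codimension-one subgroup of the rank-$4$ module $N_1\oplus N_2\oplus N_3\oplus N_4$ when $N_4\cong N_3$; instead one must swap $N_3$ for $N_4$ to retain a multiplicity-free triple. Arranging this bookkeeping (the relabelling ensuring $\psi\ne\chi_1,\chi_2$, and the ensuing comparison of normal closures) is the delicate part of the argument.
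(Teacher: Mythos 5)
Your proof is correct and takes essentially the same route as the paper's: reduce to $a_i=1$ via Lemma \ref{l:6}, exhibit a core-free hyperplane of the multiplicity-free sum $N_1\oplus N_2\oplus N_3$, and when $|O_p(G)|\geqs p^4$ swap in a fourth irreducible summand to produce a second core-free hyperplane whose normal closure differs, hence a second conjugacy class. If anything, your explicit relabelling (permitting $N_4\cong N_3$ but not $N_4\cong N_1,N_2$) makes precise a point the paper states loosely, since with the labels fixed a minimal normal $p$-subgroup $M$ satisfying $M\ne N_1$ and $M\not\cong N_2,N_3$ need not exist (e.g.\ when the extra summand is isomorphic to $N_2$), and one must first permute the $N_i$ exactly as you do.
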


\begin{proof}  
If $a_i \geqs 2$ for some $i$, then apply Lemma \ref{l:6}, so we may assume $a_i = 1$ for all $i$. Note that $N_1 \times N_2 \times N_3$ has a core-free subgroup $H$ of index $p$, which implies that $\a(G) \geqs 1$. If $|O_p(G)| \geqs p^4$, then $G$ has a minimal normal $p$-subgroup $M$ such that $M \ne N_1$ and $M$ is not isomorphic to $N_2$ or $N_3$  as $G$-modules.  Then $M \times N_2 \times N_3$ contains a core-free subgroup of index $p$ whose normal closure is not equal to the normal closure of $H$. Therefore, $\a(G) \geqs 2$ and the result follows.
\end{proof}

Let $N$ be an abelian normal subgroup of $G$. Since $N \cap \Phi(G) = 1$, it follows that $N$ is complemented in $G$ (see \cite[Lemma 2.1(vii)]{AB}, which is attributed to Gasch\"{u}tz \cite{Gas}), which means that $G = N{:}A$ for some subgroup $A$ of $G$. Since $C_G(F^*(G)) = Z(F^*(G))$ (see \cite[(31.13)]{Asch}, for example) and Lemma \ref{l:basic} gives $Z(F^*(G)) = F(G)$, we deduce that  
\begin{equation}\label{e:FGL}
G = F(G){:}L,
\end{equation}
where $L$ is isomorphic to a subgroup of ${\rm Aut}(F^*(G))$.  

\begin{lem} \label{l:8}   
Suppose that $H$ is a core-free $p$-subgroup of $F(G)$ such that $|H|^2 > |N|$, 
where $N$ is the normal closure of $H$. Then $\a(G) \geqs 1$, with equality only if the following conditions are satisfied: 
\begin{itemize}\addtolength{\itemsep}{0.2\baselineskip}
\item[{\rm (i)}]  $E(G)=1$. 
\item[{\rm (ii)}] $N_L(H)=1$ and $L$ acts faithfully on $N$.
\item[{\rm (iii)}] $F(G) = N \times Q$, where $Q = O_{p'}(G)$ and every subgroup of $Q$ is normal in $G$.
\end{itemize} 
\end{lem}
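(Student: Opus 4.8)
The plan is to prove $\a(G) \geqs 1$ at once and then, assuming $\a(G) = 1$, to show that the failure of any of (i)--(iii) produces a second core-free subgroup of base size $\geqs 3$ in a new conjugacy class. Since $H$ is core-free and nontrivial it is properly contained in its normal closure $N$, and $|H|^2 > |N|$ gives $b(G,H) \geqs 3$ by Lemma \ref{l:basic2}; hence $\a(G) \geqs 1$. Two tools drive the rest. The size criterion (Lemma \ref{l:basic2}) says that a core-free $H_2$ with $|H_2|^2 > |N_2|$, where $N_2$ is its normal closure, has $b(G,H_2) \geqs 3$; moreover $N_2 \ne N$ forces $H_2$ not to be conjugate to $H$, since conjugate subgroups have equal normal closures. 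A monotonicity principle, using that $b(G,K) \geqs 3$ means $K \cap K^g \ne 1$ for every $g \in G$, says that any core-free $H'$ with $H \leqs H'$ again satisfies $H' \cap H'^g \supseteq H \cap H^g \ne 1$, so $b(G,H') \geqs 3$, and $H' \ne H$ then gives a new class on grounds of order. So in each case I only need to exhibit a core-free proper overgroup of $H$, or a core-free subgroup whose normal closure differs from $N$.

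For (i), suppose $E(G) \ne 1$ and let $T$ be a component lying in a minimal normal subgroup $T^m$; taking a large subgroup $J$ of $T$ supplied by Proposition \ref{p:large} and setting $J' = J$ if $m=1$ or $J' = T^{m-1}\times J$ if $m \geqs 2$, the size estimate from the proof of Lemma \ref{l:1} shows $J'$ is core-free with $b(G,J') \geqs 3$. As $J' \leqs E(G)$ has nontrivial image in $F^*(G)/F(G)$ while every conjugate of $H$ lies in the normal subgroup $F(G)$, the subgroup $J'$ is not conjugate to $H$, so $\a(G) \geqs 2$. Hence $E(G) = 1$; then $F(G) = \soc(G)$ by Lemma \ref{l:basic}, so $F(G) = P \times Q$ is abelian, its $p$-part $P = O_p(G)$ is a semisimple $\mathbb{F}_pL$-module, and $G = F(G){:}L$ by \eqref{e:FGL}. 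For (ii), if $1 \ne x \in N_L(H)$ has prime order then $H{:}\langle x\rangle$ properly contains $H$ and is core-free: a minimal normal subgroup of $G$ inside it would meet $F(G)$ trivially, hence centralise $F^*(G)=F(G)$ and so lie in $F(G)$, a contradiction. Monotonicity then yields a second class, so $N_L(H) = 1$; and $C_L(N) \leqs C_L(H) \leqs N_L(H) = 1$ gives the faithful action on $N$. Likewise, if some subgroup of $Q$ is not normal in $G$ then some line $S_0 \leqs O_r(G)$ with $r \ne p$ is not $L$-invariant, whence $H \times S_0$ is a core-free proper overgroup of $H$ and $\a(G) \geqs 2$; so every subgroup of $Q$ is normal.

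It remains to prove $N = P$, which I expect to be the main obstacle. Assume $N \subsetneq P$; by semisimplicity choose a minimal submodule $M \not\leqs N$ and try to build a core-free subgroup with normal closure $N \oplus M \ne N$. If $\dim M \geqs 2$, a proper nonzero subspace $M_0 < M$ gives the core-free overgroup $H \oplus M_0$. If $M$ is one-dimensional of type $\mu$ with either $\mu$ not a constituent of $N$, or $\mu$ a constituent but $N_\mu + H \subsetneq N$ (where $N_\mu$ is the $\mu$-isotypic part of $N$), then $H \oplus \langle u + w\rangle$ is core-free for suitable $u \in N$ and $w$ spanning $M$. In each such case monotonicity or Lemma \ref{l:basic2} gives $\a(G) \geqs 2$.

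The delicate residual case is when every minimal submodule outside $N$ is one-dimensional of a type $\mu$ already occurring in $N$ with $N_\mu + H = N$. Here $H \cap N_\mu = 0$ (it would otherwise be a submodule inside $H$), so $H$ is forced to be the graph of a nonzero map from the non-$\mu$ part $N^\mu$ to $N_\mu$; no core-free subspace meets the isotypic component $N_\mu \oplus M$ nontrivially, so a naive dimension count leaves no room when $|H|^2 > |N|$ is tight. I would resolve this by passing to graphs: in the non-tight instances a graph $H_2$ of a surjection $N^\mu \to N_\mu \oplus M$ with core-free kernel has normal closure $N \oplus M$ and $|H_2|^2 > |N \oplus M|$, while in the tight instances distinct graphs $N^\mu \to N_\mu$ stay inside $N$ yet satisfy $|H_2|^2 > |N|$ and realise several conjugacy classes. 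The crux, and the place where the real module-theoretic work lies, is guaranteeing at least two such classes even when $L$ acts transitively on the relevant homomorphisms into the isotypic component; carrying this through in all tight sub-cases forces $N = P$ and completes condition (iii).
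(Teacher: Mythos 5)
Most of your proposal is correct and follows the same lines as the paper: the deduction $\a(G) \geqs 1$, the monotonicity principle (which the paper packages as the statement that $H$ is a \emph{maximal} core-free subgroup once $\a(G)=1$ is assumed), your arguments for (i) and (ii), and the proof that every subgroup of $O_{p'}(G)$ is normal in $G$ are all sound. In the step $N = O_p(G)$, your cases ``$\dim M \geqs 2$'', ``$\mu$ not a constituent of $N$'' and ``$N_\mu + H \subsetneq N$'' are also handled correctly.

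The genuine gap is the residual case, exactly where you flag it, and the strategy you propose for its tight instances cannot be repaired: you hope to exhibit at least two conjugacy classes of core-free graphs $N^\mu \to N_\mu$ lying \emph{inside} $N$, but $L$ may act transitively on all of them. For example, let $L = (C_2)^2$ act on $O_3(G) = (C_3)^4$ with characters $\chi_1, \chi_2, \mu, \mu$ on the four coordinates, where $\mu = \chi_1\chi_2$; take $N$ to be the first three coordinates, $M$ the fourth, and $H = \{(a,b,a+b,0)\}$. The core-free subgroups of $N$ of order $9$ with normal closure $N$ are precisely the graphs $(a,b) \mapsto \lambda a + \nu b$ with $\lambda\nu \ne 0$, and these four subgroups form a single $L$-orbit, so no second class inside $N$ exists. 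The way out is to change the normal closure rather than stay inside $N$: choose a nonzero $G$-homomorphism $\varphi\colon N_\mu \to M$ (one exists since both are $\mu$-isotypic) and set $H'' = \{h + \varphi(\pi_\mu(h)) : h \in H\}$, where $\pi_\mu\colon N \to N_\mu$ is the isotypic projection. Checking intersections with isotypic components as in your other cases shows $H''$ is core-free; moreover $|H''| = |H|$ and the normal closure of $H''$ is $N^\mu \times \{u + \varphi(u) : u \in N_\mu\}$ (here one uses $\pi_\mu(H) = N_\mu$, which follows from the normal closure of $H$ being $N$), a submodule of order $|N|$ that is different from $N$. Hence $b(G,H'') \geqs 3$ by Lemma \ref{l:basic2}, and $H''$ is not conjugate to $H$ because conjugate subgroups have equal normal closures; thus $\a(G) \geqs 2$, the required contradiction, in the tight and non-tight instances alike. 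For comparison, the paper argues differently at this point: it asserts that $|H|^2 > |N|$ forces $N$ to contain a minimal normal subgroup $B$ with $|B| \geqs p^2$, and then graphs $B$ onto a line $C \leqs Q$. That assertion fails precisely when every irreducible constituent of $N$ is one-dimensional (as in the example above), so the configuration you identified as the crux really is the delicate one, and it requires a change-of-closure construction of the above kind rather than a count of classes inside $N$.
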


\begin{proof} 
First observe that $b(G,K) \geqs 3$ for every core-free subgroup $K$ of $G$ containing $H$. In particular, $b(G,H) \geqs 3$ and thus $\a(G) \geqs 1$. 
For the remainder, let us assume $\a(G) = 1$. If $E(G)$ is nontrivial then it contains a core-free subgroup $J$ with $|J|^2 > |E(G)|$ (see the proof of Lemma \ref{l:1}), so $b(G,J) \geqs 3$ and $\a(G) \geqs 2$. Therefore (i) holds. 

Now $H$ is a maximal core-free subgroup of $G$ and the conditions in part (ii) hold since $F(G) = {\rm soc}(G)$ and  $\langle H, N_L(H) \rangle \cap F(G) = H$, whence $HN_L(H)$ is core-free.  

Next observe that $N \leqs O_p(G)$. Write $F(G) = N \times Q$ with
$Q$ normal in $G$. The maximality of $H$ among core-free subgroups of $G$ implies that every subgroup of $Q$ is normal and so it remains to show that $Q = O_{p'}(G)$. The inequality $|H|^2 > |N|$ implies that $N$ contains a minimal normal subgroup $B$ with $|B| \geqs p^2$. If $Q \ne O_{p'}(G)$, then $Q$ contains a normal subgroup $C$ of order $p$ and thus $B \times C$ contains a core-free subgroup $D$ with $|D|=|B|$ and normal closure $B \times C \ne N$. Therefore,  $b(G,D) \geqs 3$, which is incompatible with our hypothesis $\a(G) = 1$. Therefore $Q=O_{p'}(G)$ and (iii) holds. 
\end{proof} 

We close this section with two elementary observations in the case $E(G)=1$. 

\begin{lem} \label{l:10}  
Suppose $E(G)=1$ and $A$ is an abelian subgroup such that $A \cap F(G) =1$.
\begin{itemize}\addtolength{\itemsep}{0.2\baselineskip}
\item[{\rm (i)}]  If $A$ acts semisimply on $F(G)$, then $b(G,A) \leqs 2$.
\item[{\rm (ii)}] If $b(G,A) \geqs 3$, then $\a(G) \geqs 2$.
\end{itemize}
\end{lem}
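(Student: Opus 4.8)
The plan is to treat the two parts separately, exploiting the standing hypotheses $\Phi(G)=1$ and $E(G)=1$: these give $F := F(G) = F^*(G)$ abelian, $C_G(F) = Z(F^*(G)) = F$, and—crucially—$\soc(G) = F^*(G) = F$, so each $O_p(G)$ is a direct product of minimal normal subgroups of $G$ and hence a semisimple $G$-module. I first record that $A$ is core-free: its core is a normal abelian subgroup of $G$, so it lies in $F$, whence it is contained in $A \cap F = 1$. Thus $b(G,A)$ is defined, and in part (ii) the bound $b(G,A)\geqs 3$ forces $A \neq 1$.

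For part (i) I would reduce $b(G,A)\leqs 2$ to the existence of a single $g \in G$ with $A \cap A^g = 1$, and then search for such $g$ inside $F$. A short commutator computation shows that for $g \in F$ one has $A \cap A^g = C_A(g)$: if $x = g^{-1}ag \in A$ with $a \in A$, then $x = a[a,g]$ with $[a,g] \in F$, so $[a,g] \in A \cap F = 1$ and $x = a$ commutes with $g$. It therefore suffices to find $g \in F$ centralized by no nontrivial element of $A$. Writing $F = \bigoplus_i V_i$ as a sum of irreducible $A$-submodules (possible since $A$ acts semisimply), Schur's lemma gives that the stabilizer in $A$ of any nonzero vector of $V_i$ equals the kernel $K_i$ of $A$ on $V_i$. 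Choosing a nonzero vector in each $V_i$ and letting $g$ be their product yields $C_A(g) = \bigcap_i K_i = C_A(F) = A \cap C_G(F) = A \cap F = 1$, so $A \cap A^g = 1$ and $b(G,A)\leqs 2$.

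For part (ii) the starting point is a monotonicity principle: if $A \leqs H$ with $H$ core-free, then $b(G,A) \leqs b(G,H)$, since any base for $G/H$ is also a base for $G/A$. Hence it is enough to exhibit a core-free subgroup $H$ properly containing $A$: such an $H$ automatically satisfies $b(G,H) \geqs b(G,A) \geqs 3$ and, being strictly larger, is not conjugate to $A$, so $\a(G) \geqs 2$. I would take $H = CA$ with $C$ a suitable nonzero $A$-invariant subgroup of some $O_p(G)$; here $|H| = |C|\,|A| > |A|$ because $C \cap A \leqs F \cap A = 1$. Core-freeness of $CA$ follows from the fact that a nontrivial normal subgroup of $G$ meets $F^*(G)=F$ nontrivially: if $1 \neq K \trianglelefteq G$ lies in $CA$, then by Dedekind $1 \neq K \cap F \leqs CA \cap F = C$, and $K \cap F$ contains a minimal normal subgroup of $G$, i.e. a nonzero $G$-submodule of $C$. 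So $CA$ is core-free precisely when $C$ contains no nonzero $G$-submodule.

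The crux—and the main obstacle—is producing such a $C$. Here I would invoke the contrapositive of part (i): since $b(G,A)\geqs 3$, $A$ does not act semisimply on $F$, so there is a prime $p$ with $A$ acting non-semisimply on $P := O_p(G)$. I then claim $P$ has a minimal $A$-submodule $C$ that is not $G$-invariant; such a $C$ contains no nonzero $G$-submodule (a $G$-submodule inside the $A$-irreducible $C$ would be all of $C$, forcing $G$-invariance), giving exactly what is needed. To prove the claim, suppose instead that every minimal $A$-submodule of $P$ is $G$-invariant. Then $\soc_A(P)$ is a $G$-submodule, and since $P$ is $G$-semisimple it has a $G$-complement $P'$, which is also $A$-invariant; if $P' \neq 0$ it contains a minimal $A$-submodule, contradicting $P' \cap \soc_A(P) = 0$, so $\soc_A(P)=P$ and $A$ acts semisimply on $P$—a contradiction. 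This is the delicate point: the $G$-semisimplicity of $O_p(G)$ forced by $\Phi(G)=1$ is precisely what makes the construction possible, and one should expect it to be indispensable, since a Borel-type action (where the unique minimal $A$-submodule is $G$-invariant) admits no suitable $C$ at all.
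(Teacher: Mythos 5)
Your proof is correct. Part (i) coincides with the paper's argument in all essentials: both choose an element $g \in F(G)$ whose projection onto every irreducible $A$-summand is nontrivial, deduce $C_A(g)=1$, and then use the identity $A \cap A^g = C_A(g)$ for $g \in F(G)$; your appeal to Schur's lemma (stabilizers of nonzero vectors equal the kernel, for an abelian irreducible action) simply replaces the paper's device of embedding each summand into the $A$-submodule generated by $g$, which is a cosmetic difference. Part (ii), however, takes a genuinely different route. The paper writes $G = F(G){:}L$ and compares $A$ with the full complement $L$: it passes to the subgroup $A_L \leqs L$ with $F(G)A_L = F(G)A$, notes that $b(G,A) \geqs 3$ means $A$ (hence $A_L$, hence $L$) has no regular orbit on $F(G)$, so $b(G,L) \geqs 3$, and then uses part (i) together with the semisimplicity of $L$ on $\soc(G)$ to conclude $A_L < L$; thus $A$ and $L$ are non-conjugate core-free witnesses. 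You instead enlarge $A$ inside $F(G)A$: the contrapositive of (i), combined with the $G$-semisimplicity of $O_p(G)$ (forced by $\Phi(G)=E(G)=1$), produces an irreducible $A$-submodule $C$ of some $O_p(G)$ that is not $G$-invariant, and $CA$ is then a strictly larger core-free subgroup, so monotonicity of base size under containment finishes the proof. Your route is purely module-theoretic, produces a second witness containing $A$, and sidesteps the complement-transfer step entirely (where the paper's interim assertion $b(G,A_L) \geqs 3$ is best read as the no-regular-orbit statement, which is all that is used); the paper's route buys the stronger and reusable conclusion that the complement $L$ itself always satisfies $b(G,L) \geqs 3$, a regular-orbit theme that recurs later in the paper (compare Proposition \ref{p:regorbit} and condition (i) of Theorem \ref{t:a=0}).
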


\begin{proof}
Suppose $A$ acts semisimply on $F(G)$ and write $F(G) = N_1 \times \cdots \times N_k$, where each $N_i$ is a minimal normal subgroup of $F(G){:}A$. Fix $x \in F(G)$ so that the projection of $x$ onto each $N_i$ is nontrivial and let $J$ be the $A$-submodule of $F(G)$ generated by $x$.  Since $x$ projects onto each minimal normal
subgroup of $F(G)$, it follows that each $N_i$ is isomorphic to an $A$-submodule of $J$ and so $A$ acts faithfully on $J$. Therefore, since $C_A(x)$ acts trivially
on $J$, we deduce that $C_A(x) =1$. Finally, suppose that $a \in A \cap A^x$.  Then $a^x \in aF(G) \cap A$, so $a^x = a \in C_A(x)=1$ and we conclude that  $b(G,A) \leqs 2$.   

Now suppose $b(G,A) \geqs 3$. As in \eqref{e:FGL}, write $G = F(G){:}L$ and let $A_L$ be the subgroup of $L$ such that $F(G){:}A_L=F(G){:}A$. Then the orbits of $A$ and $A_L$ on $F(G)$ are the same and thus $b(G,A_L) \geqs 3$. By (i), $A_L$ does not act semisimply on $F(G)$ and so $A_L$ is a proper subgroup of $L$. Since $b(G,L) \geqs 3$, we conclude that $\a(G) \geqs 2$.
\end{proof} 

\begin{cor} \label{c:9}  
If $E(G)=1$ and every subgroup of $F(G)$ is normal in $G$, then $\a(G)=0$.
\end{cor}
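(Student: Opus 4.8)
The plan is to show that every core-free subgroup $H$ of $G$ satisfies $b(G,H) \leqs 2$, which is precisely the assertion $\a(G) = 0$. The whole argument will be an application of Lemma \ref{l:10}(i), so the real work lies in verifying its three hypotheses (that $H$ is abelian, that $H \cap F(G) = 1$, and that $H$ acts semisimply on $F(G)$) directly from the normality assumption.

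First I would record the easy reduction. If $H$ is any core-free subgroup, then $H \cap F(G)$ is a subgroup of $F(G)$, hence normal in $G$ by hypothesis; being contained in the core-free subgroup $H$, it must be trivial. Thus $H \cap F(G) = 1$, and writing $G = F(G){:}L$ as in \eqref{e:FGL} (recall $F^*(G) = F(G)$ since $E(G) = 1$), the subgroup $H$ embeds into $L$ and so acts faithfully on $F(G)$.

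Next I would extract the structural consequence of the hypothesis. Each $O_p(G)$ is elementary abelian since $\Phi(G) = 1$, and every subgroup of $F(G)$ is normal in $G$, so in particular every $1$-dimensional $\mathbb{F}_p$-subspace of $O_p(G)$ is $H$-invariant. A linear action fixing every line is scalar, so $H$ acts as scalars on each $O_p(G)$; hence the image of $H$ in $\Aut(F(G)) = \prod_p \GL_{d_p}(p)$ (where $d_p$ is the rank of $O_p(G)$) lies in the abelian subgroup $\prod_p Z(\GL_{d_p}(p))$. Since $H$ acts faithfully, $H$ is abelian. Moreover, because every subspace of each $O_p(G)$ is $H$-invariant, $F(G)$ decomposes as a direct sum of $1$-dimensional $H$-submodules, so $H$ acts semisimply on $F(G)$.

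With these facts in hand — $H$ abelian, $H \cap F(G) = 1$, and $H$ acting semisimply on $F(G)$ — Lemma \ref{l:10}(i) yields $b(G,H) \leqs 2$ immediately, and since $H$ ranges over all core-free subgroups we conclude $\a(G) = 0$. I expect no genuine obstacle here: the only substantive point beyond bookkeeping is the observation that the normality hypothesis forces a scalar (hence semisimple) action and the commutativity of $H$, after which the corollary drops out of Lemma \ref{l:10}(i). The one place to take a little care is the degenerate case $d_p = 1$, where ``scalar'' is automatic and $\GL_1(p)$ is already abelian, so the argument goes through unchanged.
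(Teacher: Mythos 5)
Your proposal is correct and follows essentially the same route as the paper's own proof: deduce $H \cap F(G) = 1$ from the normality hypothesis, observe that $H$ then acts faithfully and semisimply on $F(G)$ and is abelian (you supply the scalar-action argument that the paper leaves implicit), and conclude via Lemma \ref{l:10}(i). The only difference is your greater level of detail; the structure of the argument is identical.
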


\begin{proof}  
Let $H$ be any core-free subgroup of $G$. Then $H \cap F(G)=1$, so $H$ acts faithfully and semisimply on $F(G)$ and it follows that $H$ is abelian. Now apply Lemma \ref{l:10}(i). 
\end{proof} 

\section{$E(G) \ne 1$}\label{s:t3}

In this section, we prove Theorem \ref{t:main1}, which describes the finite strongly base-two groups $G$ with $\Phi(G)=1$ and $E(G) \ne 1$. 

\begin{proof}[Proof of Theorem \ref{t:main1}]
Let $G$ be a finite group such that $\Phi(G)=1$ and $E(G) \ne 1$. By Lemma \ref{l:1} we have $\a(G) \geqs 1$, with equality only if 
\[
F^*(G) = T \times C_{p_1} \times \cdots \times C_{p_k},
\]
where $T$ is a special simple group, $k \geqs 0$ and each $C_{p_i}$ is a minimal normal subgroup of $G$. Let $H$ be a representative of the unique ${\rm Aut}(T)$-class of large subgroups of $T$. For now, let us assume $\a(G) = 1$.  By inspecting the possibilities for $T$ and $H$ (see Table \ref{tab:special}), we observe that $T$ has a unique $T$-class of large subgroups. In turn, this implies that if $T < A \leqs {\rm Aut}(T)$, then $A =N_A(H)T$ and thus $N_A(H) \ne H$. 

If $F(G) = 1$ then $G$ is almost simple and the result follows from Theorem \ref{t:asb2}. Now assume $F(G) \ne 1$ and write 
\[
F^*(G) = T \times (C_{p_1})^{a_1} \times \cdots \times (C_{p_t})^{a_t},
\]
where the $p_i$ are distinct primes. Recall that $G = F(G){:}L$ (see \eqref{e:FGL}) with 
\[
T \normeq L \leqs {\rm Aut}(F^*(G)) = {\rm Aut}(T) \times {\rm GL}_{a_1}(p_1) \times \cdots \times {\rm GL}_{a_t}(p_t).
\]
If $x \in C_L(T)$ is nontrivial then $J = H \times \la x \ra$ is core-free and $b(G,J) \geqs 3$. Therefore, we have $C_L(T) = 1$ and thus the projection map $\pi:L \to {\rm Aut}(T)$ is injective, which implies that $L$ is an almost simple group with socle $T$. Set $K = N_L(H)$. 

As noted above, if $L \ne T$ then $H<K$ and $K$ is core-free in $G$, so $b(G,K) \geqs 3$ since $b(G,H) \geqs 3$. Therefore, the condition $\a(G)=1$ implies that $L=T$ and thus $G = T \times F(G) = T \times Z(G)$. Finally, suppose $\a(T) \geqs 2$, say $H,K<T$ are non-conjugate subgroups such that $b(T,H) \geqs 3$ and $b(T,K) \geqs 3$. Then $H$ and $K$ are non-conjugate core-free subgroups of $G$ and we deduce that $\a(G) \geqs 2$ since $b(G,H) = b(T,H)$ and similarly for $K$. Therefore, $\a(T)=1$ and the result follows. 

To complete the proof, let us assume $G = T \times Z(G)$ and $\a(T)=1$. We need to show that $\a(G)=1$. To see this, first observe that every core-free subgroup of $G$ is of the form 
\[
S_f = \{(s,f(s))\,:\, s \in S\},
\]
where $S<T$ is a proper subgroup and $f:S \to Z(G)$ is a homomorphism. Note that if $b(T,S) = 2$ then $b(G,S_f) = 2$ for every homomorphism $f$, so it suffices to show that $b(G,H_f) = 2$ when $f$ is nontrivial. Let us also note that
\[
H_f \cap (H_f)^t = \{(h,f(h^t)) \,:\, h \in H \cap H^t, \, f(h) = f(h^t)\}
\]
for all $t \in T$. We now consider each possibility for $T$ and $H$ (see Theorem \ref{t:asb2} and Table \ref{tab:special}). 

First assume $T = {\rm L}_2(q)$ and $H = [q]{:}(q-1)/2$, where $q \geqs 23$ is odd and $(q-1)/2$ is a prime. Let $f:H \to Z(G)$ be a nontrivial homomorphism, which implies that $\ker(f) = [q]$ and ${\rm im}(f) = C_{(q-1)/2}$. Fix $t \in T \setminus H$ and note that $H \cap H^t = \la h \ra = C_{(q-1)/2}$. Now $H$ has $(q-3)/2$ distinct classes of elements of order $(q-1)/2$, while there are $(q-3)/4$ such classes in $T$. Therefore, we can choose $t$ so that $h$ and $h^t$ are not $H$-conjugate. This implies that $f(h) \ne f(h^t)$ and we conclude that $H_f \cap (H_f)^t = 1$. 

A very similar argument applies when $T = {}^2B_2(q)$ and $H = [q^2]{:}(q-1)$ with $q-1$ a prime. Here $\ker(f) = [q^2]$, ${\rm im}(f) = C_{q-1}$ and $H \cap H^t = C_{q-1}$ for all $t \in T \setminus H$. We can now repeat the previous argument since $H$ has $q-2$ classes of elements of order $q-1$, and there are $(q-2)/2$ such classes in $T$. Finally, if $(T,H) = ({\rm J}_1,{\rm L}_2(11))$ or $(\mathbb{M},2.\mathbb{B})$ then $H/N$ is nonabelian for every proper normal subgroup $N$ of $H$ and thus every homomorphism $f:H \to Z(G)$ is trivial. This completes the proof of Theorem \ref{t:main1}.
\end{proof}

\section{A special case with $E(G)=1$}\label{s:special2} 

For the remainder of the paper, we will assume $G$ is a finite group with $\Phi(G) = E(G) = 1$. In this section, we handle a special case that will be useful in our general analysis. Our main result is Theorem \ref{t:evenderived}. In the statement, we define $r(G)$ to be the maximal rank of a minimal normal subgroup of $G$ (that is, $r(G)$ is the maximal dimension of an irreducible $G$-module contained in $F(G)$).

\begin{thm} \label{t:evenderived} 
Let $G=F(G){:}L$ be a finite group with $\Phi(G) = E(G)=1$. Assume that $r(G) =2$ and the derived subgroup of $L$ has even order. Then $\alpha(G) \geqs 1$, with equality if and only if all of the following conditions are satisfied:
\begin{itemize}\addtolength{\itemsep}{0.2\baselineskip}
\item[{\rm (i)}] $L$ has a unique involution $x$.
\item[{\rm (ii)}] $P = [x,F(G)]= (C_p)^2 = O_p(G)$ is a minimal normal subgroup of $G$ and $p \equiv 3 \imod{4}$.
\item [{\rm (iii)}] $F(G)= P \times Q$, where $Q = O_{p'}(G)$ has odd order and every subgroup of $Q$ is normal in $G$. 
\item[{\rm (iv)}] $|L| =  2(p+1)$ and one of the following holds:

\vspace{1mm}

\begin{itemize}\addtolength{\itemsep}{0.2\baselineskip}
\item[{\rm (a)}] $L = 2.D_{p+1}$ is a double cover of the dihedral group of order $p+1$.
\item[{\rm (b)}]  $L = 2.S_4$ is a double cover of $S_4$ and $p=23$. 
\item[{\rm (c)}] $L={\rm SL}_2(3)$ and $p = 11$.
\item[{\rm (d)}] $L={\rm SL}_2(5)$ and $p = 59$.
\end{itemize}
\end{itemize}
\end{thm}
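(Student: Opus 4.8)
The plan is to first determine how an involution of $L'$ acts on $F(G)$, use this to exhibit an explicit core-free subgroup of base size at least $3$ (yielding $\a(G)\geqs 1$), and then extract the configuration in (i)--(iv) from the hypothesis $\a(G)=1$. The preliminary point is that $L'$ acts trivially on every $1$-dimensional constituent of $F(G)$ (as $L$ maps into the abelian group ${\rm GL}_1$ there, so commutators die), while on a $2$-dimensional minimal normal subgroup $N=(C_p)^2$ its image lies in the derived group $({\rm GL}_2(p))'$. An involution of $({\rm GL}_2(p))'$ is trivial when $p=2$ (there this derived group is $C_3$, of odd order) and equals $-I$ when $p$ is odd (there $({\rm GL}_2(p))'={\rm SL}_2(p)$ has unique involution $-I$). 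As $|L'|$ is even, fix an involution $x\in L'$; then $P_0:=[x,F(G)]=N_1\times\cdots\times N_t$ is a product of $2$-dimensional minimal normal subgroups of odd characteristic on each of which $x=-I$, and $P_0\neq 1$ since $L$ is faithful on $F(G)$ (see \eqref{e:FGL}).

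For the lower bound I take lines $\ell_i < N_i$ and set $H=(\ell_1\times\cdots\times\ell_t)\langle x\rangle$. This $H$ is core-free: its intersection with $F(G)$ is $\prod_i\ell_i$, which meets each $N_i$ in the non-normal line $\ell_i$, and the involutions $vx$ ($v\in\prod_i\ell_i$) are not normalised by $F(G)$ because $[F(G),x]=P_0\neq 1$. Moreover the normal closure of $H$ is exactly $M=P_0\langle x\rangle$, since $\langle\ell_i^G\rangle=N_i$ and $\langle x^G\rangle=[F(G),x]\langle x\rangle=P_0\langle x\rangle$. As $|H|^2=4\prod_i|N_i|>2\prod_i|N_i|=|M|$, Lemma \ref{l:basic2} gives $b(G,H)\geqs 3$, whence $\a(G)\geqs 1$; note that $H\cong D_{2p}$ when $t=1$.

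Now assume $\a(G)=1$. First, the socle-structure lemmas of Section \ref{s:general} (Lemmas \ref{l:3}--\ref{l:7}), together with the observation that replacing the lines $\ell_i$ by other lines yields further non-conjugate subgroups of base size $\geqs 3$ (and $L$ cannot be transitive on all $\sim(p+1)^t$ choices without itself acquiring base size $\geqs 3$), force $t=1$ and $P:=P_0=[x,F(G)]=(C_p)^2=O_p(G)$ a minimal normal subgroup; this yields the $p$-part of (ii). A second involution $y\neq x$ of $L$ would act on $P$ as a reflection (determinant $-1$, fixing a line), and the associated subgroup would not be $G$-conjugate to the $-I$-type one, so $L$ has a unique involution, which is (i); in particular $L$ is faithful and irreducible on $P$, with unique involution $-I$ and cyclic or generalised quaternion Sylow $2$-subgroup. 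The decomposition $F(G)=P\times Q$ with $Q=O_{p'}(G)$ of odd order and all subgroups of $Q$ normal in $G$ (condition (iii)) is then obtained exactly as in Lemma \ref{l:8}: a non-normal subgroup of the $p'$-part, a further $p$-constituent, or an involution in $Q$ would each create a second base-$\geqs 3$ class.

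The decisive step is to show $|L|=2(p+1)$ with $L$ as listed, and this is the main obstacle. Since $\a(G)=1$, the complement $L$ must satisfy $b(G,L)=2$, so $L$ has a regular orbit on $P$ and acts fixed-point-freely there; moreover the line carrying the unique class must have stabiliser $L_\ell=\langle x\rangle$ with $L$ transitive on the $p+1$ lines, for otherwise distinct line-orbits (for instance the two orbits of a bare torus $C_{p+1}$) or larger line-stabilisers produce a second class. Hence $\bar L:=L/\langle x\rangle$ acts regularly on the $p+1$ points of ${\rm PG}(1,p)$, giving $|\bar L|=p+1$ and $|L|=2(p+1)$; requiring the order-$4$ elements of $L$ to act without an eigenline on $P$ (else $L_\ell>\langle x\rangle$) forces $-1$ to be a non-square, i.e.\ $p\equiv 3\imod 4$. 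Classifying the sharply transitive subgroups $\bar L\leqs{\rm PGL}_2(p)$ via Dickson's description of the subgroups of ${\rm PSL}_2(p)$ produces the dihedral family ($L=2.D_{p+1}$, case (a)) together with the three exceptional regular subgroups $\bar L=A_4,S_4,A_5$, which occur only for $p+1=12,24,60$, i.e.\ $p=11,23,59$, giving $L={\rm SL}_2(3),2.S_4,{\rm SL}_2(5)$ in cases (c),(b),(d). For the converse I verify directly that each listed $G$ has $\a(G)=1$: $L$ is a $p'$-group, so every element acts semisimply on $P$ and hence every abelian core-free subgroup has base size $\leqs 2$ by Lemma \ref{l:10}(i); $L$ is fixed-point-free on $P$, so $b(G,L)=2$; transitivity on lines collapses the subgroups meeting $F(G)$ in a line into the single class $\ell{:}\langle x\rangle\cong D_{2p}$ of base size $3$; and the power-automorphism action of $L$ on $Q$ (all subgroups normal) contributes no new class. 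The heart of the difficulty is precisely the order bound $|L|=2(p+1)$ and the classification of the admissible $L$, where one must, case by case, either confirm that the base-$\geqs 3$ subgroups form a single class or exhibit a second one, all while ensuring that the $p'$-radical $Q$ never introduces an extra such class.
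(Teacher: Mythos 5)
Your proposal mirrors the paper's strategy: the same preliminary observation that an involution $x\in L'$ acts as $-I$ on $[x,F(G)]$ and trivially on rank-one and characteristic-$2$ constituents, the same witness $H=(\ell_1\times\cdots\times\ell_t)\langle x\rangle$ with $|H|^2>|P_0\langle x\rangle|$ for the lower bound, and the same endgame (subgroups of ${\rm GL}_2(p)$ of order $2(p+1)$, transitive on lines, with generalized quaternion Sylow $2$-subgroup). But there is a genuine gap at the decisive reduction to $t=1$. Your mechanism --- that ``$L$ cannot be transitive on all $\sim(p+1)^t$ choices without itself acquiring base size $\geqs 3$'' --- is false. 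Under $\a(G)=1$, transitivity of $L$ on the products of lines with stabilizer $\langle x\rangle$ is precisely what is forced, and it gives $L$ a \emph{regular} orbit on $P_0$ (a vector with all coordinates nonzero has stabilizer contained in a line-stabilizer $\langle x\rangle$, and $x$ fixes no nonzero vector), hence a regular orbit on $F(G)=P_0\times C_{F(G)}(x)$, hence $b(G,L)=2$. Indeed, in the equality cases of the theorem $L$ \emph{is} transitive on the $p+1$ lines and $b(G,L)=2$, so transitivity can never be the obstruction. The paper's actual argument, for which you have no substitute, is a Sylow $2$-analysis: uniqueness of the involution makes a Sylow $2$-subgroup $S$ of $L$ generalized quaternion; if every $p_i\equiv 1\imod{4}$, then an element of order $4$ (whose square is $x=-I$) has eigenvalues $\pm\sqrt{-1}$ in $\mathbb{F}_{p_i}$ and so fixes a line in each $P_i$, contradicting regularity of $L/\langle x\rangle$; while if some $p_i\equiv 3\imod{4}$, the stabilizer in $S$ of every line of $P_i$ is exactly $\langle x\rangle$, so all $S$-orbits on these $p_i+1$ lines have size $|S|/2=\prod_j(p_j+1)_2$, forcing $\prod_{j\ne i}(p_j+1)_2=1$ and hence $t=1$.

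Two further points. First, your uniqueness-of-involution argument asserts that a second involution $y$ must act on $P$ as a reflection of determinant $-1$; this is unjustified ($y$ could act as $-I$ on $P$, or trivially on $P$ and nontrivially elsewhere --- faithfulness of $L$ on $P$ is not yet available at that stage). The paper's argument covers all cases at once: \emph{any} involution $y$ stabilizes some product of lines $D$, and then $\langle D,x,y\rangle$ is a core-free subgroup strictly containing $\langle D,x\rangle$, hence has base size $\geqs 3$ and is not conjugate to it. Second, your Dickson classification of sharply transitive $\bar{L}\leqs{\rm PGL}_2(p)$ omits the cyclic case $C_{p+1}$ (the image of a nonsplit torus is regular on the $p+1$ points); it is excluded here only because it lifts to $L\cong C_{2(p+1)}$ with $L'=1$ of odd order, contradicting the hypothesis, and this must be said. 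On the positive side, your appeal to the method of Lemma \ref{l:8} to rule out a $p$-part of $C_{F(G)}(x)$ --- a graph-type subgroup of $P\times C_p$ of order $p^2$ whose normal closure has order $p^3$ --- is correct, and is a point the paper's own write-up passes over rather quickly.
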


\begin{proof}
Let $x \in L'$ be an involution and observe that $x$ acts as a scalar on every minimal normal subgroup of $G$. Moreover, $x$ centralizes every normal subgroup of prime order, so $x$ centralizes $O_2(G)$ and $x \in Z(L)$. 

Let $P =[x,F(G)] = [x,P]$ and let $D<P$ be a subgroup such that $|D|^2=|P|$ and $D$ is core-free in $G$. Set $H = \langle D,x \rangle$. Then $x \in N_G(D)$ and $\langle P, x \rangle$ is the normal closure of $H$, whence $b(G,H) \geqs 3$ and thus $\a(G) \geqs 1$. Now  assume $\a(G) = 1$. 

We may write $F(G)=P \times  C_{F(G)}(x)$. If some subgroup $J$ of $C_{F(G)}(x)$ is core-free in $G$, then $HJ$ is core-free and $b(G,HJ) \geqs 3$, so the condition $\a(G)=1$ implies that every subgroup of $C_{F(G)}(x)$ is normal in $G$. Next we claim that $x$ is the unique involution in $L$ as in (i). If $y \in L$ is another involution, then $y$ will normalize some diagonal subgroup $D$ of $P$ as above and by considering the subgroup $\langle D, x, y \rangle$ we deduce that $\a(G) \geqs 2$, a contradiction. This justifies the claim. It  follows that a Sylow $2$-subgroup $S$ of $L$ is generalized quaternion and $S$ acts faithfully on every rank two minimal normal subgroup of $G$.

Let $R=O_2(G) \leqs C_{F(G)}(x)$. Since every subgroup of $R$ is normal in $G$, it follows that $R \leqs Z(G)$. If $R \ne 1$, then we can replace $x$ by $xy$ for some $1 \ne y \in R$ and by arguing as above we see that $b(G,J) \geqs 3$ with $J = \langle D, xy \rangle$, which implies that $\a(G) \geqs 2$ since $x$ and $xy$ are not
conjugate. Therefore, $R=1$.

Write $P = P_1 \times \cdots \times P_m$, where each $P_i$ is a minimal normal subgroup of $G$ with $|P_i|=p_i^2$. The above analysis shows that $L$ acts transitively on the set $X$ of all core-free subgroups $D$ of $P$ with $|D|^2 = |P|$ (indeed, we showed above that each subgroup $D$ of this form gives rise to a core-free subgroup $H = \langle D, x \rangle$ with $b(G,H) \geqs 3$). Since $N_L(D) = \langle x \rangle$ for each $D \in X$, this implies that $L/\langle x \rangle$ acts regularly on $X$, so $|L|=2|X| = 2\prod_i (p_i+1)$ and $|S|=2 \prod_i (p_i+1)_2$.   Since $L$ has a regular orbit on $P$, it follows that $L$ acts faithfully on $P$. In addition, the transitivity of $L$ on $X$ implies that $L$ acts primitively on each $P_i$ (indeed, if $P_i$ is imprimitive, then $L$ has at least two orbits on the lines of $P_i$, which contradicts the fact that $L$ is transitive on $X$). 

We claim that $m=1$ and $p_1 \equiv 3 \imod{4}$. To see this, first assume $p_i \equiv 1 \imod{4}$ for all $i$. Then an element of order $4$ in $S$ fixes a line in each $P_i$, and hence an element of $X$, contradicting the fact that $\langle x \rangle$ is the stabilizer in $L$ of every element in $X$. Therefore, we may assume 
$p_i \equiv 3 \imod{4}$ for some $i$. Then every orbit of $S\langle x \rangle$ on the set of lines of $P_i$ has size $|S|/2 = (p_i+1)_2$ and so $S$ has an orbit on $X$ of size $|S|/2$. This implies that $m=1$ and $p_1 \equiv 3 \imod{4}$, which proves the  claim. 

Set $p = p_1$. By inspecting the subgroups of ${\rm GL}_2(p)$ of order $2(p+1)$ that act transitively on lines (see \cite[6.25, Chapter 3]{Suz}) with a generalized quaternion Sylow $2$-subgroup, we conclude that the possibilities for $L$ are as described in part (iv).

 Finally, we show that $\a(G)=1$ when $G$ is a group satisfying all of the conditions in the theorem. First observe that if $H<F(G)$ is nontrivial and core-free in $G$, then $|H|=p$ and 
thus $b(G,H) =2$. Next, let us note that every nontrivial orbit of $L$ on $P$ is regular (since the stabilizer of a line is $\la x \ra$, and $x$ fixes no nontrivial elements of $P$), whence $b(G,H) = 2$ for every nontrivial core-free subgroup $H$ with $H \cap F(G)=1$. It follows that if $H$ is a core-free subgroup of $G$ and $b(G,H) \geqs 3$, then $H \leqs N_G(D)$, where $D$ is a subgroup of $P$ of order $p$. Now every  maximal core-free subgroup of $N_G(D)$ is conjugate to $\langle D, x \rangle$ and thus $\a(G)=1$ since all subgroups $D$ of this form are conjugate in $G$.   
\end{proof}

\section{$E(G)=1$, $c(G) \ne 2$}\label{s:c4}

Let $p$ be a prime divisor of $|G|$ and set $P = O_p(G)$. Recall that $c_p(G)$ denotes the maximum rank of a cyclic $G$-submodule of $P$ and we define $c(G) \geqs 1$ to be the maximum value of $c_p(G)$ over all primes $p$. Here we handle the groups with $c(G) \ne 2$ and we will establish Theorem \ref{t:EG}. Throughout this section, we write $G = F(G){:}L$ (see \eqref{e:FGL}).

\begin{thm}\label{t:c5}
Suppose $E(G)=1$ and $c(G) \geqs 5$. Then $\a(G) \geqs 2$.
\end{thm}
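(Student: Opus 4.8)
The plan is to locate, inside a single normal subgroup of $G$, two core-free subgroups of different orders, each meeting the hypotheses of Lemma \ref{l:basic2}. First I would record the structural consequence of our standing hypotheses: since $\Phi(G)=E(G)=1$, Lemma \ref{l:basic} gives $F(G)=\soc(G)$, so $F(G)$ is a direct product of minimal normal subgroups of $G$. As each such subgroup is an irreducible $G$-module and $F(G)$ centralizes $P=O_p(G)$, it follows that $P$ is a \emph{semisimple} $\mathbb{F}_p L$-module for every prime $p$, where $G=F(G){:}L$ as in \eqref{e:FGL}. Now choose a prime $p$ with $c_p(G)=c(G)\geqs 5$ and let $V\leqs P$ be a cyclic $L$-submodule with $\dim_{\mathbb{F}_p}V=c$. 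Then $V$ is normal in $G$, it is semisimple (being a submodule of $P$), and $G$ acts on $V$ through $L$; in particular a subspace $U\leqs V$ is core-free in $G$ precisely when it contains no nonzero $L$-submodule.

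The heart of the argument is the claim that a cyclic semisimple $\mathbb{F}_p L$-module $V$ admits a \emph{core-free hyperplane}, i.e.\ a codimension-$1$ subspace $H_1$ containing no nonzero submodule. Granting this, any intersection $H_2=H_1\cap K$ with a second hyperplane $K\neq H_1$ is a core-free subspace of codimension $2$, since a subspace of a core-free subspace is core-free. Writing $H_1=\ker\varphi$ and $H_2=\ker\varphi\cap\ker\psi$ for independent $\varphi,\psi\in V^{*}$, both $H_1$ and $H_2$ are core-free in $G$ by the previous paragraph. Applying Lemma \ref{l:basic2} with $N=V$, the bounds $|H_1|^2=p^{2(c-1)}>p^{c}=|V|$ and $|H_2|^2=p^{2(c-2)}>p^{c}$ (the latter using $c\geqs 5$) give $b(G,H_1)\geqs 3$ and $b(G,H_2)\geqs 3$. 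Since $|H_1|=p^{c-1}\neq p^{c-2}=|H_2|$, the subgroups are non-conjugate in $G$, whence $\a(G)\geqs 2$, as required.

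It remains to prove the claim, which is where the real obstacle lies: we work over the finite field $\mathbb{F}_p$ and so cannot invoke a generic-point argument. I would proceed componentwise. Write $V=\bigoplus_i S_i^{m_i}$ as a sum of isotypic components, with $S_i$ pairwise non-isomorphic simple modules, $E_i=\operatorname{End}_L(S_i)$ and $s_i=\dim_{E_i}S_i$. Because $V$ is cyclic, each $S_i^{m_i}$ is cyclic, and a short computation using the fact that the image of $\mathbb{F}_p L$ in $\operatorname{End}_{E_i}(S_i)$ is all of $\operatorname{End}_{E_i}(S_i)$ (Jacobson density) shows this forces $m_i\leqs s_i$. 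For a fixed component $W=S^{m}$ (dropping subscripts), a hyperplane $\ker\psi$ is core-free exactly when the map $\operatorname{Hom}_L(S,W)\to S^{*}$, $\theta\mapsto\psi\circ\theta$, is injective.

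The key point is then linear-algebraic. The assignment $\psi\mapsto(\theta\mapsto\psi\circ\theta)$ is an $\mathbb{F}_p$-linear map $W^{*}\to\operatorname{Hom}_E(\operatorname{Hom}_L(S,W),S^{*})$; it is injective, because if $\psi\circ\theta=0$ for all $\theta$ then $\psi$ vanishes on $\sum_\theta\operatorname{im}\theta=\soc(W)=W$. Both sides have $\mathbb{F}_p$-dimension $md$, so this map is an isomorphism. Now $\operatorname{Hom}_L(S,W)\cong E^{m}$ and $S^{*}\cong E^{s}$ as right $E$-modules with $m\leqs s$, so $\operatorname{Hom}_E(E^{m},E^{s})$ contains injective maps; pulling one back yields a $\psi$ with $\ker\psi$ core-free in $W$. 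Choosing such a functional on each isotypic component simultaneously produces the required core-free hyperplane of $V$. The delicate step throughout is this conversion of the existence question over $\mathbb{F}_p$ into the inequality $m_i\leqs s_i$ supplied by cyclicity, and this is precisely why the hypothesis is phrased in terms of the maximal dimension $c(G)$ of a \emph{cyclic} submodule rather than the rank of a minimal normal subgroup.
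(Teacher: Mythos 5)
Your proof is correct, but it takes a genuinely different route from the paper. The paper argues by contradiction: it decomposes the cyclic submodule $M$ into minimal normal subgroups $M_1 \times \cdots \times M_k$, rules out the various rank configurations via Lemmas \ref{l:3}, \ref{l:4}, \ref{l:5} and \ref{l:6} (whose proofs build diagonal-type core-free subgroups spread across several minimal normal subgroups), and finishes with Lemma \ref{l:7}. You instead work entirely inside the cyclic module $V$: you produce a core-free hyperplane $H_1$ and a core-free codimension-$2$ subspace $H_2 < H_1$, so that $|H_i|^2 > |V|$ for both (the second inequality is exactly where $c \geqs 5$ enters), Lemma \ref{l:basic2} gives $b(G,H_i) \geqs 3$, and non-conjugacy is immediate from the distinct orders. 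In effect you have generalized the paper's Lemma \ref{l:3} (which is the irreducible case of your construction, where core-freeness is automatic) to an arbitrary cyclic semisimple module, thereby avoiding the case analysis altogether. Your approach is actually closer in spirit to how the paper treats $c(G)=4$ and $c(G)=3$: the proofs of Theorems \ref{t:c4} and \ref{t:c3} invoke precisely your key claim, namely that a cyclic semisimple module has a core-free subgroup of codimension $1$, via the remark that $V$ is cyclic if and only if $V^*$ is cyclic; note that a generator $\varphi$ of $V^*$ has core-free kernel, since any submodule contained in $\ker\varphi$ is killed by every element of $\mathbb{F}_pL\varphi = V^*$. That duality observation would let you replace your isotypic-component/Jacobson-density argument (which is correct, just heavier than necessary) with two lines. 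What the paper's longer route buys is reusability: Lemmas \ref{l:4}--\ref{l:7} are needed again in the $c(G) \in \{3,4\}$ analysis, so the machinery is not wasted, whereas your argument is self-contained but exploits the slack available only when $c \geqs 5$.
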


\begin{proof}
Let $M = (C_p)^t$ be a cyclic $G$-submodule of $O_p(G)$, where $t = c(G) \geqs 5$. Write $M = M_1 \times \cdots \times M_k$, where each $M_i = (C_p)^{a_i}$ is a minimal normal subgroup of $G$. Seeking a contradiction, suppose $\a(G) \leqs 1$.

By Lemma \ref{l:3} we have $k \geqs 2$, and by combining Lemmas \ref{l:4}, \ref{l:5} and \ref{l:6}, we deduce that $a_i = 1$ for all $i$. The result now follows from Lemma \ref{l:7}. 
\end{proof}

Next assume $c(G)=4$. Our main result is the following.  

\begin{thm}\label{t:c4}
Suppose $E(G)=1$ and $c(G) = 4$. Then $\a(G) \geqs 1$, with equality if and only if 
all of the following conditions are satisfied:
\begin{itemize}\addtolength{\itemsep}{0.2\baselineskip}
\item[{\rm (i)}] $F(G) = P \times Q$, where $|P|=16$ and every subgroup of $Q = O_{2'}(G)$ is normal in $G$.
\item[{\rm (ii)}] $L$ acts regularly on the set of core-free subgroups of $P$ of order $8$.
\item[{\rm (iii)}] One of the following holds:

\vspace{1mm}

\begin{itemize}\addtolength{\itemsep}{0.2\baselineskip}
\item[{\rm (a)}] $P = (C_2)^4$ is a minimal normal subgroup of $G$, $L = \la x \ra = C_{15}$ and $\la x^5 \ra$ acts faithfully on $Q$. In particular, $|Q|$ is divisible by a prime $r$ with $r \equiv 1 \imod{3}$. 
\item[{\rm (b)}] $P = A \times B$ and $L = (C_3)^2$, where $A = (C_2)^2$ and $B = (C_2)^2$ are minimal normal subgroups of $G$ that are non-isomorphic as $G$-modules and $C_L(Q)$ is contained in $C_L(A)$ or $C_L(B)$. 
\end{itemize} 
\end{itemize}
\end{thm}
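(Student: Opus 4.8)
The plan is to make Lemma \ref{l:8} the main engine. Since $c(G)=4$, fix a prime $p$ and a cyclic $G$-submodule $M=(C_p)^4$ of $O_p(G)$ achieving this value. Because $M$ is cyclic, no proper submodule contains a generator, and a short argument then produces a hyperplane $D<M$ of order $p^3$ that contains no minimal submodule and is not itself a submodule; its normal closure is therefore all of $M$, and $|D|^2=p^6>p^4=|M|$, so Lemma \ref{l:basic2} gives $b(G,D)\geq 3$ and hence $\a(G)\geq 1$. Assuming $\a(G)=1$, I would apply Lemma \ref{l:8} to $H=D$: this immediately yields the skeleton of the classification, namely $N_L(D)=1$, that $L$ acts faithfully on $M$, and that $F(G)=M\times Q$ with $Q=O_{p'}(G)$ and every subgroup of $Q$ normal in $G$. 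In particular $O_p(G)=M=P$ has order $p^4$, giving most of condition (i) (and forcing $c_{p'}(G)=1$ for every other prime, since all subgroups of $Q$ are normal).

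Next I would pin down the action of $L$ on $P$. As $F(G)$ is nilpotent, $Q$ centralises the abelian group $P$, so two subgroups of $P$ are $G$-conjugate precisely when they are $L$-conjugate. Every core-free subgroup $D'<P$ of order $p^3$ has $b(G,D')\geq 3$ by the computation above, so $\a(G)=1$ forces $L$ to be transitive on the set $Y$ of such hyperplanes; combined with $N_L(D)=1$ this gives that $L$ acts \emph{regularly} on $Y$, which is condition (ii). This is where $p=2$ gets forced. When $P$ is irreducible every hyperplane lies in $Y$, so $L$ is regular on all $(p^4-1)/(p-1)$ hyperplanes; but over $\mathbb{F}_p$ with $p$ odd every involution of $\GL_4(p)$ fixes a hyperplane (it has an eigenvector in the dual), so a regular $L$ must have odd order, whereas $(p^4-1)/(p-1)=p^3+p^2+p+1$ is even for odd $p$. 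Hence $p=2$, $|L|=15$, and the unique group of order $15$ is $C_{15}$, acting as a Singer cycle — this is case (iii)(a).

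For reducible $P$ I would run through the structure of a cyclic $4$-dimensional $\mathbb{F}_pL$-module (its head is multiplicity-free), count $|Y|$ in each configuration, and match it against a faithful regular $L$-action. The only surviving possibility is $p=2$ with $P=A\oplus B$, where $A,B$ are non-isomorphic $2$-dimensional irreducibles; then the minimal submodules are exactly $A$ and $B$, the non-core-free hyperplanes are the $3+3$ containing one of them, so $|Y|=15-6=9$ and $L=C_3\times C_3$ acts regularly, which is case (iii)(b). It then remains to translate $\a(G)=1$ into the conditions governing the action of $L$ on $Q$: I would analyse the core-free subgroups $H$ with $H\cap P=D$ that project nontrivially to $L$ or interact with $C_L(Q)$, and show that the existence of any such $H$ with $b(G,H)\geq 3$ not conjugate to one coming from $Y$ is equivalent to the failure of the stated conditions — faithfulness of $\langle x^5\rangle=C_3$ on $Q$ (whence $|Q|$ is divisible by a prime $r\equiv 1 \pmod{3}$) in case (a), and $C_L(Q)\leq C_L(A)$ or $C_L(B)$ in case (b). The converse is then a direct check in the style of the closing arguments of Theorems \ref{t:evenderived} and \ref{t:main1}: the regular action on $Y$ makes all maximal core-free subgroups with $b\geq 3$ conjugate, every smaller core-free subgroup of $P$ and every core-free subgroup meeting $F(G)$ trivially has base size $2$, and the $Q$-conditions eliminate the one potential extra class.

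I expect the main obstacle to be the two-part middle step. First, the module-theoretic classification that forces $p=2$ and identifies $L$: the irreducible case falls to the clean parity argument above, but the reducible case requires genuine care, since not all hyperplanes are core-free and one must control the internal structure of a cyclic module rather than count all hyperplanes. Second, the bookkeeping with $Q$, where one must rule out subgroups mixing $D$, $L$ and $C_L(Q)$ in order to extract exactly the faithfulness and containment conditions recorded in (iii).
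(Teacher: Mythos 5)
Your skeleton matches the paper's proof: produce a core-free subgroup of $P$ of order $p^3$ to get $\a(G)\geqs 1$, use $\a(G)=1$ to force $O_p(G)=P$ of order $p^4$, $F(G)=P\times Q$ with every subgroup of $Q=O_{p'}(G)$ normal, and $L$ acting regularly on the set $X$ of core-free subgroups of $P$ of order $p^3$; then classify the module structure of $P$ and extract the conditions on $Q$. Your use of Lemma \ref{l:8} to obtain conditions (i) and (ii) in one stroke is legitimate and in fact a little cleaner than the paper, which re-derives these facts directly; and your treatment of the irreducible case (parity of $(p^4-1)/(p-1)$ plus the fact that an involution fixes a hyperplane, all hyperplanes being core-free there) is exactly the paper's argument.

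However, your classification of the reducible case rests on a false module-theoretic claim, and this creates a genuine gap. A cyclic semisimple $\mathbb{F}_pL$-module need \emph{not} be multiplicity-free: the correct criterion is that $S^n$ is cyclic if and only if $n\leqs \dim_E S$, where $E=\operatorname{End}_L(S)$. So if $S$ is a $2$-dimensional irreducible with $E=\mathbb{F}_p$ (equivalently, with nonabelian image in $\GL_2(p)$), then $S\oplus S$ is a cyclic module of rank $4$. This is precisely the paper's case (III): $P=A\times B$ with $A\cong B$ as $G$-modules and $G/C_G(A)$ nonabelian. Your proposal never sees this configuration, and eliminating it is not free: the paper needs a bespoke argument (for $p=2$: $L=S_3$, $L'$ centralizes $Q$, and $L'$ normalizes a core-free order-$4$ subgroup $D$, so $b(G,DL')\geqs 3$ and $\a(G)\geqs 2$; for $p$ odd: $p$ divides $|X|=p(p^2-1)$, forcing unipotent elements and hence $L=\SL_2(p)$, after which Theorem \ref{t:evenderived} gives $\a(G)\geqs 2$). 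Relatedly, your elimination of case (II) ($A\not\cong B$) for odd $p$ is only asserted, and the parity trick from the irreducible case does not transfer: an involution acting as $(1,-1)$ on $A\times B$ normalizes no core-free hyperplane, since (dually) its eigenvectors all lie in $A^*$ or $B^*$, so such involutions are compatible with regularity on $X$. The paper instead shows that any involution with a common eigenvalue on $A$ and $B$ violates regularity, deduces that a Sylow $2$-subgroup $S$ of $L$ has a unique involution, invokes Theorem \ref{t:evenderived} to force $S$ abelian and hence cyclic of order at most $(p^2-1)_2$, and contradicts $|L|_2=|X|_2=((p+1)_2)^2(p-1)_2$. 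Without these two arguments --- and Theorem \ref{t:evenderived} is the key external tool for both --- the middle step of your proof is incomplete. (The extraction of the $Q$-conditions and the converse verification are also only sketched, but there your plan does track the paper's actual argument.)
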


\begin{proof}
Write $c_p(G)=4$ for some prime $p$ and set $P = O_p(G)$. Let $V$ be a cyclic submodule of $P$ of order $p^4$. Then $V$ contains a core-free subgroup of order $p^3$ (indeed, the existence of such a subgroup is equivalent to the cyclicity of the dual module $V^*$, noting that $V$ is cyclic if and only if $V^*$ is cyclic, since $V$ is semisimple). In particular, $\a(G) \geqs 1$.   

Now assume that $\a(G)=1$. This means that all core-free subgroups $H$ with $b(G,H) \geqs 3$ must form a single conjugacy class of subgroups of $V$ of order $p^3$. We claim that $P=V$. Seeking a contradiction, suppose $V<P$ and let $A$ be a minimal normal $p$-subgroup of $G$ not contained in $V$. Since $c(G)=4$, it follows that $V=B \times C$ where $A \cong B$ as $G$-modules and so $A \times B$ is a cyclic module of rank $4$ with a core-free subgroup of order $p^3$ (not contained in $V$). Therefore, $\a(G) \geqs 2$ and we have reached a contradiction, so $P=V$ as claimed.   

Let us also observe that a core-free subgroup of $P$ of order $p^3$ is a maximal
core-free subgroup, whence $F(G) = P \times Q$ and every subgroup of $Q = O_{p'}(G)$ is normal in $G$. By Lemmas \ref{l:5}, \ref{l:6} and \ref{l:7},  one of the following holds:
\begin{itemize}\addtolength{\itemsep}{0.2\baselineskip}
\item[{\rm (I)}] $P = (C_p)^4$ is a minimal normal subgroup of $G$.
\item[{\rm (II)}] $P = A \times B$, where $A=(C_p)^2$ and $B = (C_p)^2$ are minimal normal subgroups of $G$ and non-isomorphic as $G$-modules.
\item[{\rm (III)}] $P = A \times B$, where $A=(C_p)^2$ and $B = (C_p)^2$ are minimal normal subgroups of $G$ that are isomorphic as $G$-modules and $G/C_G(A)$ is nonabelian.
\end{itemize}

Let $X$ be the set of core-free subgroups of $P$ of order $p^3$ and note that $|X|$ coincides with the number of subgroups of order $p$ which generate $P$ as a $G$-module. Since we are assuming $\a(G)=1$, it follows that $L$ acts regularly on $X$ and thus $|L|=|X|$. We now consider the three cases (I), (II) and (III).

First assume (I) holds, in which case $|X| = (p^4 -1)/(p-1)$. If $p \ne 2$, then $|X|$  is even and so $L$ contains an involution. However, any involution stabilizes some subgroup in $X$, which contradicts the fact that $L$ acts  regularly on $X$. Now assume $p=2$ and note that $|L|=|X|=15$, so $L = C_{15}$. Here the order $3$ subgroup of $L$ must act faithfully on $Q$ (otherwise we can construct a  core-free subgroup $H$ of order $12$ with $b(G,H) \geqs 3$, which gives $\a(G) \geqs 2$) and therefore $G$ is described as in part (iii)(a).

Conversely, suppose that $G$ satisfies the conditions in (i), (ii) and (iii)(a). We claim that $\a(G) = 1$. To see this, let $H$ be a nontrivial core-free subgroup of $G$ and set $D = H \cap F(G)$. Note that $L$ has a regular orbit on $F(G)$, so $b(G,H) = 2$ if $D=1$. By (ii), $G$ has a unique conjugacy class of core-free subgroups of order $8$, so it suffices to show that $b(G,H)=2$ if $|D|=2$ or $4$. If $|D|=2$, then $N_G(D) = F(G)$ and thus $H=D$ and $b(G,H)=2$. Now assume $|D|=4$. First observe that $F(G)$ has $35$ subgroups of order $4$, comprising three $L$-orbits (two of size $15$ and one of size $5$). It is straightforward to show that each orbit contains a pair of subgroups $D_1, D_2$ such that $D_1 \cap D_2 = 1$, whence $b(G,D) = 2$. So we may assume there exists an element $y \in N_L(D)$ of order $3$, in which case we need to consider core-free subgroups of the form $H = \langle D, z \rangle$, where $z \in yF(G)$ has order $3$.  Fix $w \in L$ such that $D \cap D^w=1$ and let $R = \la u \ra$ be a subgroup of $F(G)$ of odd prime order such that $y \not\in C_L(R)$. Then $H^w \cap H^u = 1$, so $b(G,H) =2$ and we conclude that $\a(G)=1$.  

Next suppose (II) holds and note that $|X| = (p+1)^2(p-1)$. First assume $p$ is odd and let $S$ be a Sylow $2$-subgroup of $L$. If $S$ contains an involution which has a common eigenvalue on $A$ and $B$, then there exists an involution normalizing a core-free subgroup of $P$ of order $p^3$ and we deduce that $\a(G) \geqs 2$. Therefore, we may assume $S$ contains a unique involution. By Theorem \ref{t:evenderived}, $S$ must be abelian and so $S$ is cyclic. This implies that $|S|$ divides $p^2-1$, which is incompatible with the fact that $L$ acts transitively on $X$.  

Now assume $p=2$, in which case $|X|=9$ and $L \leqs {\rm GL}_2(2)^2 = (S_3)^2$, so $L = C_3 \times C_3$. Suppose there exists a ``diagonal" $C_3$ subgroup of $L$ which centralizes $Q$. Then $P{:}L = A_4 \times A_4$ contains a diagonal subgroup $J \cong A_4$ with normal closure $PJ$. It follows that $b(G,J) \geqs 3$ and thus $\a(G) \geqs 2$. Therefore, $\a(G)=1$ only if the conditions in (iii)(b) are satisfied.

Conversely, suppose that $G$ satisfies the conditions in (i), (ii) and (iii)(b). Let $H$ be an arbitrary core-free subgroup of $G$ with $b(G,H) \geqs 3$ and note that $D = H \cap F(G) \ne 1$. In particular, if we can show that $|D| \ne 2,4$ then $\a(G) = 1$. If $|D|=2$, then $N_G(D)=F(G)$ and $b(G,H)=b(G,D)=2$, a contradiction. Now assume $|D|=4$. First observe that $F(G)$ has $33$ core-free subgroups of order $4$, which are permuted in
orbits of size $9,9,9,3$ and $3$ by $L$.  One computes that each orbit of size $9$ contains a pair of subgroups with trivial intersection, so if $D$ is contained in one of these orbits then $H=D$ and $b(G,H) = 2$. Now assume $D$ is contained in one of the orbits of size $3$, which means that $J=N_L(D)$ has order $3$. We can view $P$ as a direct sum of two irreducible and isomorphic $J$-modules, which means that the intersection of any two distinct $J$-invariant proper subgroups of $P$
is trivial. In particular, $D \cap D^w=1$ for some (in fact all) $w \in L \setminus{J}$. Now the conditions in (iii)(b) imply that $H$ acts nontrivially on $Q$ and we may fix $u \in Q$ such that $J \not\leqs C_L(u)$. Then $H^w \cap H^u=1$, so $b(G,H) = 2$ and we conclude that $\a(G) = 1$.

Finally, let us assume (III) holds. We show that $\a(G) \geqs 2$ in this case. Note that  
\[
|X| = \frac{p^4-1}{p-1} - (p+1)^2 =p(p^2-1)
\]
is the number of core-free subgroups of $P$ of order $p^3$. First assume $p=2$ and observe that $L=S_3$ since this is the only nonabelian group acting faithfully and irreducibly on $C_2 \times C_2$. Since every subgroup of $Q$ is normal, it follows that $L'$ centralizes $Q$. Now $L'$ normalizes five subgroups of $P$ of order $4$, only three of which are normal. Therefore, there is a core-free
subgroup $D$ of $P$ of order $4$ normalized by $L'$. Since the normal closure of $DL'$ is $PL'$, it follows that $b(G,DL') \geqs 3$ and thus $\a(G) \geqs 2$.

Now assume $p$ is odd. Since $p$ divides $|X|$, it follows that $L$ contains unipotent elements and thus $L = {\rm SL}_2(p)$ is the only possibility since $L$ acts irreducibly on a $2$-dimensional module. Therefore, $L'$ has even order and by applying Theorem \ref{t:evenderived} we conclude that $\a(G) \geqs 2$ (one could also argue directly in this case). 
\end{proof} 

\begin{rem} \label{r:examples}  
Referring to cases (a) and (b) in part (iii) of Theorem \ref{t:c4}, we find that the smallest groups with $\a(G) = 1$ are of the form
\[
((C_2)^4 \times C_7){:}C_{15} \;  \mbox{ and } \; ((C_2)^2 \times (C_2)^2 \times C_7){:}(C_{3} \times C_3)
\]
respectively.  
\end{rem} 

\begin{thm}\label{t:c3}
Suppose $E(G)=1$ and $c(G) = 3$. Then $\a(G) \geqs 1$, with equality if and only if 
all of the following conditions are satisfied:
\begin{itemize}\addtolength{\itemsep}{0.2\baselineskip}
\item[{\rm (i)}] $F(G) = P \times Q$, where $|P|=p^3$ and every subgroup of $Q = O_{p'}(G)$ is normal in $G$.
\item[{\rm (ii)}] $L$ acts regularly on the set of core-free subgroups of $P$ of order $p^2$, and in particular, $L \cap Z({\rm GL}_3(p))=1$. 
\item[{\rm (iii)}] One of the following holds:

\vspace{1mm}

\begin{itemize}\addtolength{\itemsep}{0.2\baselineskip}
\item[{\rm (a)}] $P = (C_p)^3$ is a minimal normal subgroup of $G$, $L = C_{p^2+p+1}$ and $p \not\equiv 1 \imod{3}$.
\item[{\rm (b)}] $P = A \times B$, where $A = (C_p)^2$ and $B = C_p$ are minimal normal subgroups of $G$. In addition, $L$ acts primitively on $A$ and it contains no nontrivial element that acts as a scalar on $A$ and acts trivially on $B \times Q$. Furthermore, if $p \geqs 3$ then
$p \equiv 3 \imod{4}$ and $|Q|$ is divisible by a prime $r \equiv 1 \imod{8}$. 
\item[{\rm (c)}] $P = A \times B \times C$ and $p \geqs 3$, where $A$, $B$ and $C$ are normal subgroups of $G$ that are pairwise non-isomorphic as $G$-modules.
In addition, $|Q|$ is divisible by an odd prime and $L$ contains no nontrivial element that acts as a scalar on $A \times B$ and acts trivially on $C \times Q$ (and similarly for any permutation of $A$, $B$ and $C$). 
\end{itemize}
\end{itemize}
\end{thm}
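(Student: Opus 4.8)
The plan is to follow the template of the proof of Theorem \ref{t:c4}, adapting each step from rank $4$ to rank $3$ cyclic modules. Fix a prime $p$ with $c_p(G)=3$, set $P=O_p(G)$ and let $V\leqs P$ be a cyclic $G$-submodule of order $p^3$. Since $F(G)={\rm soc}(G)$ is a semisimple $G$-module by Lemma \ref{l:basic}, $V$ is multiplicity-free and $V^*$ is cyclic; choosing a functional $\p$ generating $V^*$ produces a core-free hyperplane $H=\ker\p$ of $V$ with normal closure $V$. As $|H|^2=p^4>p^3=|V|$, Lemma \ref{l:basic2} gives $b(G,H)\geqs 3$, so $\a(G)\geqs 1$. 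For the converse I would assume $\a(G)=1$ and apply Lemma \ref{l:8} to this $H$: since $F(G)=O_p(G)\times O_{p'}(G)$, conclusion (iii) forces the normal closure of $H$ to equal $O_p(G)$, whence $P=V$ has order $p^3$, $F(G)=P\times Q$ with $Q=O_{p'}(G)$ having every subgroup normal, $N_L(H)=1$, and $L$ faithful on $P$. This is precisely condition (i) together with part of (ii).

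Next I would classify the semisimple structure of the rank-$3$ module $P$. Being multiplicity-free of dimension $3$, it is one of: (I) irreducible of dimension $3$; (II) $A\oplus B$ with $A$ irreducible of rank $2$, $B$ of rank $1$ and $A\not\cong B$; or (III) $A\oplus B\oplus C$, three pairwise non-isomorphic rank-$1$ modules. Lemmas \ref{l:5}, \ref{l:6} and \ref{l:7} eliminate the remaining configurations (a repeated constituent, or $|P|>p^3$), and in case (III) Lemma \ref{l:7} already records $|O_p(G)|=p^3$. Writing $X$ for the set of core-free subgroups of $P$ of order $p^2$, each $D\in X$ has $b(G,D)\geqs 3$ by Lemma \ref{l:basic2}; as $\a(G)=1$, the class of $H$ is a single $L$-orbit equal to $X$ and all point-stabilizers are trivial, so $L$ acts regularly on $X$. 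This gives condition (ii) — in particular $L\cap Z(\GL_3(p))=1$, since a nontrivial scalar fixes every hyperplane — and $|L|=|X|$. A direct count of core-free hyperplanes yields $|X|=p^2+p+1$, $p^2-1$ and $(p-1)^2$ in cases (I), (II), (III) respectively.

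I would then identify $L$ in each case. In (I), $X$ is the full set of $p^2+p+1$ hyperplanes, so $L$ acts regularly on the lines of ${\rm PG}(2,p)$ and is therefore a Singer cycle $C_{p^2+p+1}$; the requirement $L\cap Z(\GL_3(p))=1$ holds exactly when $\gcd(p^2+p+1,p-1)=\gcd(3,p-1)=1$, i.e.\ $p\not\equiv 1\imod{3}$, giving (iii)(a). In (II) we have $r(G)=2$, so if $L'$ had even order then Theorem \ref{t:evenderived} would force $|O_p(G)|=p^2$, contradicting $|P|=p^3$; hence $L'$ has odd order, its Sylow $2$-subgroup is abelian, and an imprimitive (monomial) $L$ would contribute an even antidiagonal factor to $L'$, so $L$ is primitive on $A$. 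Moreover, for $p\equiv 1\imod{4}$ an element of order $4$ in the Sylow $2$-subgroup has eigenlines in $A$ and hence stabilizes a member of $X$, contradicting regularity (exactly as in the proof of Theorem \ref{t:evenderived}); this forces $p\equiv 3\imod{4}$ when $p$ is odd, giving (iii)(b). In (III), faithfulness places $L$ inside the diagonal torus $(C_{p-1})^3$, so $L$ is abelian of order $(p-1)^2$, and $p\geqs 3$ since $\mathbb{F}_2$ has no nontrivial rank-$1$ modules, giving (iii)(c).

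Finally I would pin down the $Q$-conditions and the ``no bad element'' clauses, and prove the converse, by a uniform mechanism: since every subgroup of $Q$ is normal, $L$ acts by a scalar character on each $O_r(G)$, and any core-free $H$ with $b(G,H)\geqs 3$ has $H\cap F(G)\leqs P$ (a nontrivial subgroup of $Q$ would be normal, hence in the core). By Lemma \ref{l:10}, an abelian complement acting semisimply has base size $2$, so a second conjugacy class can arise only from an element $z\in L$ acting as a scalar on a proper submodule and trivially on its complement together with $Q$; from an invariant diagonal $D$ one then builds $H=\la D,z\ra$ with $b(G,H)\geqs 3$. Excluding such $z$ is exactly the condition stated in (iii)(b)/(c), and the congruences $r\equiv 1\imod{8}$ in (b) and the odd-prime divisor of $|Q|$ in (c) are precisely what guarantees that the offending element of $L$ (of order dividing $8$, respectively of odd order) acts nontrivially on $Q$; conversely, when such a $u\in Q$ (or $u\in C$) exists one separates $H^w\cap H^u=1$ as in the converse part of Theorem \ref{t:c4}. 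I expect the main obstacle to be case (II): classifying the admissible sharply transitive groups $L$ of order $p^2-1$ with odd derived subgroup (the nearfield/primitivity analysis) and extracting the exact conditions $p\equiv 3\imod{4}$ and $r\equiv 1\imod{8}$, where the interplay with Theorem \ref{t:evenderived} must be handled carefully so as not to collide with its rank-$2$ conclusion.
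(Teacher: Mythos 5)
Your skeleton matches the paper's: establish $\a(G)\geqs 1$ from a core-free hyperplane of a cyclic rank-$3$ module, reduce to $P=V$ and $F(G)=P\times Q$ with every subgroup of $Q$ normal, split into the three semisimple types (I), (II), (III), count $|X|$, and use regularity of $L$ on $X$. Your use of Lemma \ref{l:8} for the reduction is legitimate and in fact cleaner than the paper's appeal to the argument of Theorem \ref{t:c4}. However, there is a genuine gap at the step where you extract the congruence $p\equiv 3\imod{4}$ in case (II). You argue that for $p\equiv 1\imod{4}$ an element $s$ of order $4$ in the Sylow $2$-subgroup "has eigenlines in $A$ and hence stabilizes a member of $X$, exactly as in Theorem \ref{t:evenderived}". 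This fails: a member of $X$ is a core-free hyperplane of $A\oplus B$, equivalently (dually) a line $\la a+b\ra$ with $0\ne a\in A$, $0\ne b\in B$, and $g$ fixes such a line only if $ga=\l a$ and $gb=\l b$ for the \emph{same} $\l$, i.e.\ only if $g$ has a common $\mathbb{F}_p$-rational eigenvalue on $A$ and on $B$. Since $s^2$ is the unique involution, which acts as $-1$ on $A$ and trivially on $B$, the element $s$ has eigenvalues $\pm i$ on $A$ but $\pm 1$ on $B$: there is no common eigenvalue, so $s$ fixes \emph{no} member of $X$ even when $i\in\mathbb{F}_p$. The analogy with Theorem \ref{t:evenderived} breaks down precisely because there $X$ consists of products of lines taken one from each rank-two summand (so fixing a line in each summand suffices), whereas here the rank-one summand $B$ imposes the common-eigenvalue condition. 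So your route to $p\equiv 3\imod{4}$ collapses; in the paper this congruence does not come from a fixed-point argument on $X$ at all, but from the delicate analysis of stabilizers $N_L(D)$ of lines $D\leqs A$, the structure of the cyclic Sylow $2$-subgroup inside a nonsplit torus, and the interaction with the action on $B\times Q$.

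There are two further shortfalls. First, the sufficiency direction (conditions (i)--(iii) imply $\a(G)=1$) is where most of the paper's work lies, and you only gesture at it: one must show that any core-free $H$ with $b(G,H)\geqs 3$ and $D=H\cap F(G)$ of order $p$ whose normal closure is $A$ (or $A\times B$ in case (III)) leads to a contradiction, via the facts that elements of $N_L(D)$ induce scalars on $A$, that the resulting abelian complement $J$ acts faithfully on $B\times Q$ with a regular orbit, that $H^1(J,F(G))=0$ allows one to normalize $H=D{:}J$, and finally the explicit separation $H^x\cap H^v=1$; none of this is a routine transcription of the Theorem \ref{t:c4} argument. Second, two smaller slips: your primitivity argument in case (II) ("an imprimitive $L$ would contribute an even antidiagonal factor to $L'$") is not valid, since imprimitive linear groups can have trivial or odd derived subgroup --- the paper instead deduces primitivity from transitivity of $L$ on the lines of $A$, which follows from transitivity on $X$; and in case (III) you have the mechanism reversed: the offending elements there are $2$-elements (involutions acting as scalars on $A\times B$), and $|Q|$ must have an \emph{odd} prime divisor precisely because a $2$-element cannot act nontrivially by scalars on a $2$-group.
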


\begin{proof}   
Let $p$ be a prime with $c_p(G)=3$ and let $V$ be a subgroup of $F(G)$ of order $p^3$ that is cyclic as a $G$-module. Since $V$ is cyclic, it contains a core-free subgroup $H$ of order $p^2$ with normal closure $V$. Therefore, $b(G,H) \geqs 3$ and $\a(G) \geqs 1$. 
 
Now assume $\a(G) = 1$ and set $P=O_p(G)$. As in the proof of Theorem \ref{t:c4}, we deduce that $P = V$ and $F(G) = P \times Q$, where every subgroup of $Q = O_{p'}(G)$  is normal in $G$. In addition, we see that the possibilities for $P$ are as follows:
\begin{itemize}\addtolength{\itemsep}{0.2\baselineskip}
\item[{\rm (I)}] $P = (C_p)^3$ is a minimal normal subgroup of $G$.
\item[{\rm (II)}] $P = A \times B$, where $A = (C_p)^2$ and $B = C_p$ are minimal normal subgroups of $G$.
\item[{\rm (III)}] $P = A \times B \times C$, where $A, B$ and $C$ are normal subgroups of $G$ that are pairwise non-isomorphic as $G$-modules.   
\end{itemize}
Let $X$ be the set of core-free subgroups of $P$ of order $p^2$ and note that $|X|$ is the number of subgroups of $P$ of order $p$ which generate $P$ as a $G$-module. Since $\a(G)=1$, we deduce that $L$ acts regularly on $X$ and thus $G$ satisfies the conditions recorded in parts (i) and (ii) in the statement of the theorem. We now handle cases (I), (II) and (III) in turn.

First consider (I), in which case $|X| = p^2+p+1$. If $p \equiv 1 \imod{3}$ then $L$ contains an element of order $3$. But any element of order $3$ stabilizes a subgroup in $X$, which is incompatible with the fact that the action of $L$ on $X$ is regular, whence $p \not\equiv 1 \imod{3}$. Since the only irreducible subgroup of $\GL_3(p)$ of order $p^2 + p + 1$ is a cyclic maximal torus, we deduce that (iii)(a) holds. Note that since $L$ acts fixed point freely on the set of nontrivial elements of $P$, any core-free subgroup of $G$ is either contained in $P$ or intersects $F(G)$ trivially. It is clear that the only such subgroups $H$ with $b(G,H) \geqs 3$ are those contained in $P$ with order $p^2$, whence $\a(G) = 1$ since any two of these subgroups are conjugate by (ii).  

Now let us turn to (II) and note that $|L| = |X| = p^2-1$. If $p=2$, then $L = C_3$ and it is straightforward to see that the only core-free subgroups $H$ with $b(G,H) \geqs 3$ are the three core-free subgroups of $P$ of order $4$, all of which are conjugate by (ii). Therefore, $\a(G)=1$ in this case. 

Now assume $p$ is odd. Let $S$ be a Sylow $2$-subgroup of $L$ and note that $S$ is abelian by Theorem \ref{t:evenderived}. Thus $L = M{:}S$,  where $M$ is abelian of odd order. It is now straightforward to see that $\alpha(G)=1$ if and only if all of the conditions in parts (i), (ii) and (iii)(b) in the statement of the theorem are satisfied. Indeed, we have already noted that the conditions in (i) and (ii) are equivalent to the existence of a unique conjugacy class in $G$ of core-free subgroups of $P$ of order $p^2$. So we need only show that there are no other core-free subgroups $H$
of $G$ with $b(G,H) \geqs 3$. 

Seeking a contradiction, suppose $G$ has such a subgroup $H$, which is not an order $p^2$ subgroup of $P$. Set $D = H \cap F(G)$. Since $L$ acts regularly on $X$, $L$ has a regular orbit on $F(G)$ and thus $D \ne 1$, which means that $|D|=p$. First assume that the normal closure of $D$ is $P$. Then since $L$ acts regularly on $X$ (and hence, by duality, on the set of subgroups of $P$ of order $p$, which generate $P$ as a $G$-module), it follows that $N_L(D)=1$, $H=D$ and $b(G,H)=2$. The only other possibility is that the normal
closure of $D$ is $A$. Since $L$ acts regularly on $X$ and $|L|=p^2-1$, the only elements of $L$ which normalize such a subgroup $D$ must induce scalars on $A$ and so the corresponding subgroup $J$ of $L$ acts faithfully on $B \times Q$. Moreover, $J$ is abelian and so $J$ has a regular orbit on $W=B \times Q$. One can reduce to the case where $C_W(J)=1$.  Then since $J$ is abelian and has order prime to $p$, it follows that $H^1(J, F(G))=0$ and so we may assume that $J$ is a (normal)
subgroup of $L$ and 
$H=D{:}J$. Then there exists an element $x \in L$ with $C^x \ne C$ and an element $v \in W$ generating a regular $J$-orbit. It is now straightforward to show that $H^x \cap H^v=1$, so we have reached a contradiction and thus $\a(G)=1$.  

Note that the transitivity of $L$ on $X$ implies that $L$ acts transitively on the set of lines in $A$.  In particular, $L$ must act primitively on $A$. Moreover, we  claim that $S$ is cyclic. If not, then $S$ contains an elementary abelian subgroup of order $4$. But we have already noted that $S$ does not contain an involution acting as a scalar on $P = A \times B$. Any involution in $S$ which does not act as a scalar on $A$ must fix a subgroup in $X$, which contradicts the fact that $L$ acts regularly on $X$. Therefore, $S$ contains a unique involution and thus $S$ is cyclic (recall that $S$ is abelian). Since $|S|=(p^2-1)_2 > (p-1)_2$, it follows that $S$ acts faithfully on $A$ and the involution in $S$ is trivial on $B$.   However, $S$ acts faithfully on $B \times Q$ and so on $Q$.  Since
$|S| \geqs 8$,  it follows that $|Q|$ must be divisible by a prime $r$ with $r \equiv 1 \imod{8}$, as recorded in (iii)(b). 
 
Finally we consider (III). Here $p$ is odd and $|L| = |X| = (p-1)^2$.  Since $L$ acts regularly on $X$, it does not contain any nontrivial scalars. If $L$ contains an element $x \ne 1$ acting as a scalar on $A \times B$ and centralizing $C \times Q$,
then $J = \langle D, x \rangle$ has normal closure $\langle A \times B, x \rangle$,  where $D$ is a diagonal subgroup of $A \times B$. Therefore, $b(G, J) \geqs 3$ and $\a(G) \geqs 2$. This shows that $\a(G) = 1$ only if the conditions in (iii)(c) are satisfied. 

Note that if (III) holds and $\a(G)=1$, then the Sylow $2$-subgroup of $L$ is not cyclic (indeed, if this subgroup is cyclic then $|L|_2 \leqs (p-1)_2$, but we have
$|L|_2 = ((p - 1)_2)^2$). Therefore, $L$ contains an involution acting as a scalar on $A \times B$ (or some other product) and so must act nontrivially on $C \times Q$.   In particular, $L$ contains an involution acting as inversion on $A \times B$ and centralizing $C$ (up to permutations of $A$, $B$ and $C$). Therefore, this involution must act nontrivially on $Q$ and so $|Q|$ is divisible by an odd prime. 

Conversely, suppose that $G$ satisfies the conditions in (i), (ii) and (iii)(c). Let $H$ be a core-free subgroup of $G$ with $b(G,H) \geqs 3$ and set $D = H \cap P$, noting that $D \ne 1$ since $L$ has a regular orbit on $P$. If $|D|=p^2$ then $N_L(D)=1$ and $G$ has a unique class of such subgroups. Now assume $|D|=p$, in which case the normal closure of $D$ has order $p^2$ or $p^3$. Since $L$ acts regularly on $X$, it also acts regularly on the set of subgroups of order $p$ which generate $P$ as $G$-module. So if the normal closure of $D$ has order $p^3$, then $H=D$ and $b(G,H)=2$, a contradiction.  Now assume the normal closure of $D$ has order $p^2$, say $A \times B$. Then $H$ induces scalars on $A \times B$ and thus $H/D$ acts faithfully on $C \times Q$. Write $H = DH_0$ for some complement $H_0$ to $D$.   

Let $W = C \times Q  = [H,W] \times C_W(H)$.   By assumption,  $H_0$ acts faithfully on $W$ and so on $[H,W]$.  As in the above analysis of case (II), we may assume that $C_W(H)=1$ and $H_0$ is contained in $L$. Since $H_0$ is abelian, it  has a regular orbit on $[H,W]$, say $v \in [H,W]$ is contained in such an orbit and fix  $x \in L$ with $D \cap D^x=1$. Then $H^v \cap H^x=1$ and we conclude that 
$\a(G)=1$. 
\end{proof}

\begin{rem}
Let us consider the groups $G$ in Theorem \ref{t:c3} with $\a(G)=1$, which arise in cases (a), (b) and (c) in part (iii). A minimal example in (iii)(a) is to take $Q=1$, which yields the primitive affine group $G = (C_p)^3{:}C_{p^2+p+1} < {\rm AGL}_3(p)$.  
In (iii)(b), if $p = 2$, we can take $G=A_4 \times C_2$.  More generally, we can take $L = C_{p^2-1}$ acting irreducibly and faithfully on $A$, with the Sylow $2$-subgroup of $L$ acting faithfully on $Q = C_r$.  Similarly, in (iii)(c) we can take $F(G) = A \times B \times C \times Q$  so that $L=C_{p-1} \times C_{p-1}$ acts faithfully on $A \times B$ and the subgroup of $L$ acting as scalars on $A \times B$ acts faithfully on $Q = C_r$, where $r \ne p$ is a prime and $p-1$ divides $r-1$. For example, if $p=3$ and $r=5$, then $G=((C_3)^3 \times C_5){:}(C_2)^2$ and $\a(G)=1$ (here $G$ is isomorphic to $D_{30} \times S_3 \times C_3$).
\end{rem}

\begin{thm}\label{t:c1}
Suppose $E(G)=1$ and $c(G) = 1$. Then $\a(G) = 0$.
\end{thm}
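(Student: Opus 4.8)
The plan is to reduce the statement directly to Corollary \ref{c:9}, so that the only real work is to translate the hypothesis $c(G)=1$ into the condition that every subgroup of $F(G)$ is normal in $G$.

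First I would unwind the definition of $c_p(G)$. Fix a prime $p$ with $O_p(G) \ne 1$ and set $P = O_p(G)$, viewed as an $\mathbb{F}_p G$-module. The condition $c_p(G)=1$ says that the cyclic $G$-submodule generated by any element $v \in P$ has dimension at most $1$; equivalently, for every nonzero $v$ the orbit $G\cdot v$ is contained in the line $\mathbb{F}_p v$. Thus $G$ fixes every $1$-dimensional subspace of $P$, and a standard linear algebra fact then forces $G$ to act on $P$ by scalar transformations, so the image of $G$ in $\GL(P)$ lies in $Z(\GL(P)) \cong \mathbb{F}_p^\times$.

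Since $c(G)=\max_p c_p(G) = 1$, the previous paragraph applies to every prime $p$ with $O_p(G) \ne 1$. Because $G$ acts as scalars on $P=O_p(G)$, every $\mathbb{F}_p$-subspace of $P$ is $G$-invariant, and hence every subgroup of $O_p(G)$ is normal in $G$. Now $F(G) = \prod_p O_p(G)$ is the direct product of its Sylow subgroups over distinct primes (each factor being elementary abelian since $\Phi(G)=1$), so any subgroup $H \leqs F(G)$ factors as $H = \prod_p (H \cap O_p(G))$, and each factor is normal in $G$ by the above. Hence every subgroup of $F(G)$ is normal in $G$.

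With this in hand, the conclusion is immediate: since $E(G)=1$ and every subgroup of $F(G)$ is normal in $G$, Corollary \ref{c:9} yields $\a(G)=0$. I do not expect any genuine obstacle here; the only point requiring a little care is the translation of the module-theoretic hypothesis $c(G)=1$ into the normality statement, in particular verifying that it forces the scalar action simultaneously for all primes dividing $|F(G)|$. Everything then rests on Corollary \ref{c:9}, whose proof already records the key observation that a core-free subgroup meets $F(G)$ trivially and so acts faithfully and semisimply, allowing an appeal to Lemma \ref{l:10}(i).
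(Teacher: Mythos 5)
Your proposal is correct and follows essentially the same route as the paper: the paper's proof likewise observes that $c(G)=1$ forces every subgroup of $F(G)$ to be normal in $G$ and then invokes Corollary \ref{c:9}. Your write-up simply fills in the details of this translation (scalar action on each $O_p(G)$ and the factorization of subgroups of $F(G)$ into their $p$-parts), which the paper leaves implicit.
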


\begin{proof}  Since $E(G)=1$ and $c(G)=1$,  $L$ is abelian and every subgroup of $F(G)$ is normal in $G$. Now apply Corollary \ref{c:9}.  
\end{proof}

By combining Theorems \ref{t:c5}, \ref{t:c4}, \ref{t:c3} and \ref{t:c1}, we obtain the following corollary.

\begin{cor}\label{c:eg1}
Suppose $E(G)=1$, $c(G) \ne 2$ and $\a(G) \leqs 1$. Then $G$ is solvable and either $c(G) \in \{3,4\}$ and $\a(G)=1$, or $c(G)=1$ and $\a(G) = 0$. 
\end{cor}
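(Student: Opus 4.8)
The plan is to prove this purely by assembling the four preceding theorems, organised as a case analysis on the value of $c(G)$. Since $c(G) \geqs 1$ always holds and the hypothesis rules out $c(G) = 2$, the only possibilities are $c(G) = 1$, $c(G) \in \{3,4\}$, and $c(G) \geqs 5$; the assumption $\a(G) \leqs 1$ will eliminate the last range and determine $\a(G)$ exactly in each of the others.

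First I would dispose of the large values of $c(G)$: if $c(G) \geqs 5$ then Theorem \ref{t:c5} gives $\a(G) \geqs 2$, contradicting $\a(G) \leqs 1$, so this case does not occur. For $c(G) = 4$ and $c(G) = 3$, Theorems \ref{t:c4} and \ref{t:c3} respectively yield $\a(G) \geqs 1$, and combining this with $\a(G) \leqs 1$ forces $\a(G) = 1$ and, moreover, all of the equality conditions recorded in those theorems. Finally, if $c(G) = 1$ then Theorem \ref{t:c1} gives $\a(G) = 0$ immediately. This already establishes the stated dichotomy.

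It then remains only to verify solvability. Since $E(G) = 1$ we have $F^*(G) = F(G)$ nilpotent, so writing $G = F(G){:}L$ as in \eqref{e:FGL} it suffices to show that $L$ is solvable. I would read this directly off the explicit descriptions provided by the theorems: for $c(G) = 1$ the proof of Theorem \ref{t:c1} shows $L$ is abelian; for $c(G) = 4$ Theorem \ref{t:c4} forces $L = C_{15}$ or $L = C_3 \times C_3$; and for $c(G) = 3$ Theorem \ref{t:c3} gives $L$ cyclic in case (iii)(a) and a metabelian group $M{:}S$, with $M$ and $S$ both abelian, in case (iii)(b). In each of these instances $L$ is solvable, and hence so is $G = F(G){:}L$.

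The only situation demanding a further word is case (iii)(c) of Theorem \ref{t:c3}, where $L$, of order $(p-1)^2$, is not named outright; this is the one genuine obstacle, though a minor one. Here $A$, $B$ and $C$ are pairwise non-isomorphic one-dimensional $G$-modules, so $L$ preserves the decomposition $P = A \times B \times C$ and acts on $P$ through the abelian group $\GL_1(p)^3 = (C_{p-1})^3$. The kernel of this action is $C_L(P)$, which embeds faithfully into $\Aut(Q)$ because $L$ acts faithfully on $F(G) = P \times Q$; since every subgroup of $Q$ is normal in $G$, this subgroup acts on $Q$ by power automorphisms, which commute, so $C_L(P)$ is abelian. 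Thus $L$ is metabelian, hence solvable, which completes the proof.
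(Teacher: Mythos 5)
Your proof is correct and takes essentially the same route as the paper, whose entire proof of this corollary is the one-line observation that it follows by combining Theorems \ref{t:c5}, \ref{t:c4}, \ref{t:c3} and \ref{t:c1}. Your explicit verification of solvability of $L$ in each case --- in particular the power-automorphism argument showing that $C_L(P)$, and hence $L$, is metabelian in case (iii)(c) of Theorem \ref{t:c3} --- correctly fills in details that the paper leaves implicit, but it is the same assembly of the same four results.
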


\section{Proof of Corollaries \ref{c:ns} and \ref{c:prim}}\label{s:cor}

In this section we prove Corollaries \ref{c:ns} and \ref{c:prim}. As usual, we assume $G$ is a finite group with $\Phi(G)=1$ and we write $G = F(G){:}L$ as in \eqref{e:FGL}.

\begin{proof}[Proof of Corollary \ref{c:ns}]
Let $G$ be a finite nonsolvable group. If $E(G) \ne 1$ then Theorem \ref{t:main1} applies and thus $G$ has the form given in part (i) of the corollary (note that the possibilities for $T$ are recorded in Theorem \ref{t:asb2}). 

Now assume $E(G)=1$ and define $c(G)$ and $r(G)$ as before. Since $G$ is nonsolvable it follows that $r(G) \geqs 2$, while Corollary \ref{c:eg1} yields $c(G)=2$. Therefore, $r(G) = c(G)=2$. If $L'$ has odd order, then $L = J{:}S$, where $J = O(L)$ is the largest odd order normal subgroup of $L$ and $S$ is an abelian $2$-group. In particular, $G$ is solvable and so we may assume $L'$ contains an involution. We now conclude via Theorem \ref{t:evenderived}, noting that part (iv)(d) holds since $G$ is nonsolvable.
\end{proof}

Now let us turn to Corollary \ref{c:prim}. We first establish a more general result on regular orbits.

\begin{prop}\label{p:regorbit}  
Suppose $c(G) \leqs 2$, $L'$ has odd order and $L$ has no regular orbit on $F(G)$. 
Then $F(G) = P \times Q$, where $P = O_2(G)$ has order $4$ and $L = S_3 \times L_0$ so that $L_0 = C_L(P)$ and $S_3$ acts faithfully on $P$ and trivially on $Q$.  
In particular, $G = S_4 \times G_0$ with $G_0 = Q{:}L_0$. 
\end{prop}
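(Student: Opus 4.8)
The plan is to convert the statement into a regular-orbit problem for $L$ acting on $F(G)$ and to resolve it by induction on $|F(G)|$. Recall that $F(G)$ is abelian (as $\Phi(G)=E(G)=1$), that $C_L(F(G))=1$, and that, $F(G)$ being abelian, the $G$-module structure on $F(G)$ coincides with the $L$-module structure; thus $F(G)=\soc(G)$ is completely reducible as an $L$-module and $L$ acts faithfully. The hypothesis $c(G)\leqs 2$ says exactly that every irreducible $L$-constituent of $F(G)$ has dimension at most $2$, since an irreducible submodule is cyclic. As $L'$ has odd order we have $L'\leqs O(L)$ and $L/O(L)$ is an abelian $2$-group, so $L=O(L){:}S$ with $S$ an abelian Sylow $2$-subgroup. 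Finally, if $L$ were abelian the construction in the proof of Lemma \ref{l:10}(i) would give $v$ with $C_L(v)=1$, i.e.\ a regular orbit; so I may assume $L$ is nonabelian. I would isolate the following auxiliary claim, stated so that it survives passage to subgroups and submodules: if $K$ has odd derived subgroup and abelian Sylow $2$-subgroup and acts faithfully and completely reducibly on a finite abelian group $M$ with all constituents of dimension $\leqs 2$, then $K$ has a regular orbit on $M$ unless the $2$-part $M_2$ equals $(C_2)^2$ and $K$ induces the full group $\GL_2(2)\cong S_3$ on it.

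To prove the claim I would argue by induction, peeling off blocks. If $K$ is abelian, Lemma \ref{l:10}(i) gives a regular orbit, so assume $K$ nonabelian; then, since $1$-dimensional constituents give abelian images, there is a $2$-dimensional constituent $V$ over some $\mathbb{F}_p$ with nonabelian image $\bar K_V=K/C_K(V)\leqs \GL_2(p)$, and $\bar K_V'$ is odd. If $p$ is odd, the primitive irreducible subgroups of $\GL_2(p)$ have image $A_4,S_4$ or $A_5$ modulo scalars, all with even derived subgroup, so $\bar K_V$ is monomial; a monomial group has a regular orbit on $V$ (a vector on one of the two invariant lines is fixed only by the identity), so I may peel $V$, reduce to $C_K(V)$ acting faithfully on a complement $M_V$ with the same $2$-part, and invoke induction — either $C_K(V)$ has a regular orbit (which combines with the regular point in $V$ to give one for $K$) or $M_2=(C_2)^2$ with $C_K(V)$, and hence $K$, inducing $\GL_2(2)$. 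If $p=2$ then $\bar K_V=\GL_2(2)=S_3$. Here $c(G)\leqs 2$ controls the rest of $M_2$: a trivial $\mathbb{F}_2$-summand would make $v+z$ generate a cyclic submodule of dimension $3$, and an inequivalent $2$-dimensional $\mathbb{F}_2$-summand would give one of dimension $4$, both excluded; an isomorphic copy $V\oplus V$ is allowed by $c(G)\leqs2$ but then $S_3$ has a regular orbit on $V\oplus V$, which (peeling $M_2$, then the odd part $M_{2'}$ by induction) again yields a regular orbit for $K$. Hence if $K$ has no regular orbit then $M_2=V=(C_2)^2$ and $K$ induces $S_3$, proving the claim.

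Applying the claim to $(L,F(G))$ with no regular orbit gives $P:=O_2(G)=(C_2)^2$ and $L$ inducing $S_3=\GL_2(2)$ on $P$; set $L_0=C_L(P)$, so $L/L_0\cong S_3$. For each nonzero $f\in P$ the stabilizer ${\rm Stab}_L(f)=\langle L_0,t_f\rangle$ has index $2$ over $L_0$, where $t_f$ induces the transposition of $S_3$ fixing $f$. If $\langle L_0,t_f\rangle$ acted faithfully on $Q=O_{2'}(G)$, then as $Q$ is odd (no $\mathbb{F}_2$-block, so no $S_3$-obstruction) the claim would give a regular orbit $w$ on $Q$, and $(f,w)$ would be a regular point for $L$ — a contradiction. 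Hence $C:=C_{\langle L_0,t_f\rangle}(Q)\ne1$; since $C\cap L_0=C_L(P)\cap C_L(Q)=C_L(F(G))=1$ and the index is $2$, we get $|C|=2$, generated by an involution $c_f$ inducing the transposition on $P$ and centralizing $Q$. Letting $c_1,c_2,c_3$ be the involutions obtained from the three nonzero $f\in P$, the subgroup $M=\langle c_1,c_2,c_3\rangle$ surjects onto $\langle$ the three transpositions $\rangle=S_3$, centralizes $Q$, and meets $L_0$ trivially (an element of $M\cap L_0$ centralizes both $P$ and $Q$). Thus $M\cong S_3$, $ML_0=L$, and $[M,L_0]=1$ (each commutator centralizes $P$ and $Q$), so $L=S_3\times L_0$ with $S_3=M$ faithful on $P$ and trivial on $Q$, and $L_0=C_L(P)$. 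Therefore $F(G)=P\times Q$ and $G=(P{:}M)\times(Q{:}L_0)=S_4\times G_0$ with $G_0=Q{:}L_0$, as required.

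The main obstacle is the inductive regular-orbit claim, and within it the characteristic-$2$ bookkeeping: one must combine $c(G)\leqs 2$ with the \emph{non-existence} of a regular orbit to force $O_2(G)$ to be a single $(C_2)^2$ block (ruling out trivial, inequivalent and isomorphic extra summands for different reasons), and one needs the classification of nonabelian irreducible subgroups of $\GL_2(p)$ with odd derived subgroup — the point being that the primitive ones are eliminated and the monomial ones always have a regular orbit, so every odd block is peelable. By comparison, the final extraction of the $Q$-centralizing involutions and the direct-product splitting $L=S_3\times L_0$ is then essentially formal, driven by the faithfulness relation $C_L(P)\cap C_L(Q)=1$.
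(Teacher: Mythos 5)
Your overall architecture runs parallel to the paper's own proof: a per-block regular-orbit statement for groups with odd derived subgroup and abelian Sylow $2$-subgroup, the isolation of $S_3$ acting on $(C_2)^2$ as the unique obstruction, and then the formal splitting $L=S_3\times L_0$ (your direct construction of the involutions $c_f$ is a sound replacement for the paper's appeal to the completeness of $S_3$). The proof collapses, however, at the base case of your inductive claim, namely the odd-characteristic $2$-dimensional blocks, where two assertions are false. First, a nonabelian irreducible subgroup of $\GL_2(p)$ with odd derived subgroup need not be monomial: you have omitted the subgroups of the normalizer $C_{p^2-1}.C_2$ of a nonsplit torus, which are primitive but have cyclic or dihedral image modulo scalars; for instance the dihedral group $D_{2(p+1)}$ (norm-one elements of $\mathbb{F}_{p^2}^*$ extended by the Frobenius map $x\mapsto x^p$) has derived subgroup $C_{(p+1)/2}$, which is odd whenever $p \equiv 1 \imod{4}$. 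Second, in the monomial case your justification that ``a vector on one of the two invariant lines is fixed only by the identity'' fails, since the group may contain nontrivial elements ${\rm diag}(1,b)$, which fix the first line pointwise; already $C_r\wr C_2$ with $r$ an odd prime (irreducible, with derived subgroup $C_r$ and Sylow $2$-subgroup $C_2$) does this.

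These are not repairable slips, because the auxiliary claim itself is false. For $p\equiv 1\imod{4}$ the group $L=D_{2(p+1)}\leqs \GL_2(p)$ above has Sylow $2$-subgroup $C_2\times C_2$ and odd derived subgroup, yet it has no regular orbit on $V=\mathbb{F}_{p^2}$: the $p+1$ involutions $x\mapsto tx^p$ (with $t^{p+1}=1$) fix the $p+1$ lines of $V$ pointwise, one line each, so every nonzero vector has a nontrivial stabilizer. Likewise, for $p\equiv 3\imod{4}$ and $p\geqs 7$, the split-case group $L=\{{\rm diag}(a,b): ab \mbox{ a nonzero square}\}\langle s\rangle$ ($s$ the coordinate swap), i.e.\ the scalars times the monomial dihedral group $D_{2(p-1)}$, has odd derived subgroup $C_{(p-1)/2}$, abelian Sylow $2$-subgroup, and no regular orbit on $\mathbb{F}_p^2$ (generic vectors of ratio $\rho$ are fixed by ${\rm diag}(\rho,\rho^{-1})s$, and vectors on the two invariant lines by ${\rm diag}(1,d)$ with $d$ a nontrivial square). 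Worse, taking $G=(C_p)^2{:}L$ for either of these groups gives $\Phi(G)=E(G)=1$, $c(G)=2$, $L'$ odd and no regular orbit, while $O_2(G)=1$: these contradict the conclusion of Proposition \ref{p:regorbit}, so they are counterexamples to the statement itself, not merely to your lemma. You should know that the paper's own proof founders at exactly the same point: it claims the group generated by the line-fixing involutions has a nonabelian Sylow $2$-subgroup, but that group is $D_{2(p-1)}$ (split case), respectively $D_{2(p+1)}$ (nonsplit case), whose Sylow $2$-subgroup is $C_2\times C_2$ precisely when $p\equiv 3\imod{4}$, respectively $p\equiv 1\imod{4}$. (A secondary issue in your write-up: you apply the claim to the non-normal subgroup $\langle L_0,t_f\rangle$ acting on $Q$, where complete reducibility of the module is not guaranteed.) In short, no proof along these lines can succeed until the statement is furnished with a stronger hypothesis or an enlarged list of exceptional configurations.
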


\begin{proof}  
Clearly if $G = S_4 \times G_0$, then the centralizer of any element
of $F(G)$ contains an involution and so $L$ does not have a regular orbit on $F(G)$. 
For the remainder, let us assume $L$ has no regular orbit on $F(G)$.

Let $p$ be an odd prime and set $R=L/C_L(O_p(G))$. We claim that $R$ has a regular orbit on $O_p(G)$. This is clear if $R$ is abelian, so let us assume $R$ is nonabelian.   This implies that $O_p(G)$ has order $p^2$ and so we may view $R$ as a subgroup of ${\rm GL}_2(p)$. We also observe that $O(R)$ is  cyclic and the Sylow $2$-subgroup of $R$ is abelian. Since we are assuming $L'$ has odd order, it follows that $R'$ also has odd order and thus $|R|$ is indivisible by $p$. This implies that either 
\begin{itemize}\addtolength{\itemsep}{0.2\baselineskip}
\item[{\rm (a)}] $R$ acts imprimitively on $O_p(G)$ and is therefore contained in the normalizer of a split maximal torus in ${\rm GL}_2(p)$; or 
\item[{\rm (b)}] $R$ acts primitively and is contained in the normalizer of a nonsplit maximal torus. 
\end{itemize}

First assume (a) holds and write $R \leqs N$, where $N = N_{{\rm GL}_2(p)}(T) = C_{p-1} \wr S_2$ and $T = (C_{p-1})^2$ is a split maximal torus of ${\rm GL}_2(p)$. Here $O_p(G)$ has two lines whose union is $R$-invariant and we claim that $R$ has a regular orbit on the remaining $p^2 - (2p-1) = (p-1)^2$ elements. These points lie on $p-1$ distinct lines. Suppose $y \in N$ acts trivially on
one of these lines. Then $y \in N \setminus T$ is an involution and $y$ acts trivially on a unique line. It follows that $R$ has no regular orbit on $O_p(G)$ if and only if $R$ contains all such involutions. However, the group generated by these involutions has a nonabelian Sylow $2$-subgroup, a contradiction.

A very similar argument applies in case (b). Here $R \leqs N$ with $N = N_{{\rm GL}_2(p)}(T) = C_{p^2-1}.C_2$. There are $p+1$ lines in total and the only elements in $N$ that act trivially on a line are involutions in $N \setminus T$. So if $R$ has no regular orbit, then all such involutions must be in $R$ and once again we reach a contradiction since the Sylow $2$-subgroup of the group generated by these involutions is nonabelian. This justifies the claim. 

Now write $F(G) = O_2(G) \times Q$ and let $K = C_L(Q)$ be the subgroup of $L$ acting trivially on $Q$. By the previous claim, $L/C_L(O_p(G))$ has a regular orbit on $O_p(G)$ for each odd prime $p$, so there exists $q \in Q$ such that $C_L(q) = K$. Now $K$ acts faithfully on $O_2(G)$, and it has a regular orbit unless $K = S_3$ and $O_2(G) = C_2 \times C_2$. So we may assume we are in the latter situation, in which case $K$ does not have a regular orbit. Since $K$ is normal in $L$, and since $K$ is its own automorphism group, we deduce that $L = K \times C_L(P)$.   Therefore
\[
G = (O_2(G){:}K) \times G_0 = S_4 \times G_0
\]
and the result follows. 
\end{proof}

\begin{cor}\label{c:regorbit}   
Suppose $c(G) \leqs 2$, $L'$ has odd order and $L$ has no regular orbit on $F(G)$. 
Then $\alpha(G) \geqs 1$, with equality if and only if $G = S_4 \times A$ and $A$ is an abelian group of odd order.
\end{cor}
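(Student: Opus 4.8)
The plan is to combine the structural description from Proposition~\ref{p:regorbit} with a direct analysis of the core-free subgroups of a direct product. By Proposition~\ref{p:regorbit} the hypotheses force $G = S_4 \times G_0$ with $G_0 = Q{:}L_0$, where $Q = O_{2'}(G)$ is abelian of odd order, $L_0 = C_L(P)$, and $P = O_2(G) \cong (C_2)^2$ is the natural module for the $S_3$ factor in $S_4 = P{:}S_3$. The first step is to record that $H_1 = S_3 \times 1$ is core-free with $b(G,H_1) = b(S_4,S_3) = 3$ (its conjugates are exactly the $S_3^s \times 1$), which gives $\alpha(G) \geq 1$ at once. Throughout I will use the elementary product formula $b(G_1 \times G_2, H_1 \times H_2) = \max\{b(G_1,H_1), b(G_2,H_2)\}$, valid because a conjugate of $H_1 \times H_2$ is $H_1^{g_1} \times H_2^{g_2}$ and an intersection of such is trivial precisely when both coordinate intersections are, the two coordinates being choosable independently.

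For the forward implication I would assume $\alpha(G) = 1$ and show $L_0 = 1$. Suppose instead $1 \neq x \in L_0$ and set $K = \langle x\rangle$. Since $L_0$ is a complement to $Q$ in $G_0$, we have $K \cap Q = 1$; and as $E(G_0)=1$ gives $C_{G_0}(F(G_0)) = F(G_0) = Q$, any normal subgroup of $G_0$ contained in $K$ centralizes $Q$ and hence lies in $K \cap Q = 1$, so $K$ is core-free in $G_0$. Then $S_3 \times K$ is core-free in $G$ (its core is the product of the cores of $S_3$ and $K$), it is not $G$-conjugate to $S_3 \times 1$ because its projection to $G_0$ is the nontrivial group $K$, and $b(G, S_3 \times K) = \max\{3, b(G_0,K)\} \geq 3$ by the product formula. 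This yields a second class and contradicts $\alpha(G)=1$; hence $L_0 = 1$, so $G_0 = Q$ and $G = S_4 \times A$ with $A = Q$ abelian of odd order.

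For the converse I would take $G = S_4 \times A$ with $A$ abelian of odd order and show every core-free $H$ with $b(G,H) \geq 3$ is conjugate to $S_3 \times 1$. Since $A$ is central of odd order, every nontrivial subgroup of $A$ is normal in $G$, so a core-free $H$ meets $1 \times A$ trivially and is therefore the graph of a homomorphism $\phi \colon P_1 \to A$, where $P_1 = \pi_{S_4}(H) \leq S_4$. If the normal Klein four subgroup $V = O_2(S_4)$ were contained in $P_1$, then $\phi|_V$ would be trivial ($A$ being odd) and $V \times 1 = O_2(G)$ would lie in $H$, contradicting core-freeness; hence $P_1$ is core-free in $S_4$. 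I would then run through the short list of core-free subgroups of $S_4$ up to conjugacy, namely $1$, $\langle(12)\rangle$, $\langle(12)(34)\rangle$, $\langle(123)\rangle$, $\langle(1234)\rangle$, $\langle(12),(34)\rangle$ and $S_3$, noting that all but $S_3$ have base size at most $2$ via an explicit pair of trivially intersecting conjugates, while $b(S_4,S_3)=3$. As $A$ has odd order the only $P_1$ admitting a nontrivial $\phi$ is $C_3 = \langle(123)\rangle$, and for the resulting diagonal $H = \langle((123),a)\rangle$ conjugation by $(12)$ gives a trivial intersection, so $b(G,H)=2$; and $S_3^{\mathrm{ab}} = C_2$ forces $\phi$ trivial when $P_1 = S_3$. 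Thus the only core-free subgroups with base size $\geq 3$ are the conjugates of $S_3 \times 1$, giving $\alpha(G)=1$.

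I expect the main obstacle to be the converse: one must rule out that any mixed (diagonal) core-free subgroup attains base size $\geq 3$. The crucial simplification is that the oddness of $A$ kills all homomorphisms $\phi$ except on a single copy of $C_3$, whose base size is then settled by a one-line conjugacy computation; the enumeration of core-free subgroups of $S_4$ and their base sizes is routine but must be carried out carefully to confirm that $S_3$ is the unique class attaining base size $3$.
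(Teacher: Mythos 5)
Your proof is correct and takes essentially the same approach as the paper: both rest on Proposition \ref{p:regorbit} to obtain the decomposition $G = S_4 \times G_0$ with $G_0 = Q{:}L_0$, then force $L_0 = 1$ when $\alpha(G) = 1$, and finally verify that $S_3 \times 1$ is the unique relevant class in $S_4 \times A$. The only differences are in execution: the paper deduces $L_0 = 1$ by noting that $S_3 \leqslant L$ and $L$ both have base size at least $3$ and hence must be conjugate (forcing $|L| = 6$), whereas you exhibit an explicit second class $S_3 \times K$; and your enumeration of the core-free subgroups of $S_4 \times A$ as graphs of homomorphisms supplies in full the converse verification that the paper leaves as ``easy to see.''
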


\begin{proof}  
Since $L$ does not have a regular orbit on $F(G)$, it follows that $b(G,L) \geqs 3$ and thus $\a(G) \geqs 1$. In addition, if $G$ has the prescribed form, then it is easy to see that $L$ is the only core-free subgroup $H$ of $G$ (up to conjugacy) with $b(G,H) \geqs 3$ and thus $\a(G) = 1$.   

To complete the proof, suppose $\alpha(G) =1$. By Proposition \ref{p:regorbit}, we have $O_2(G) = C_2 \times C_2$ and $L$ has a subgroup $H = S_3$ with $b(G,H) \geqs 3$. Since $b(G,L) \geqs 3$, it follows that $H$ and $L$ are conjugate and thus  $G=F(G){:}H = S_4 \times A$, where $A$ is a complement to $O_2(G)$ in $F(G)$. Therefore, $A$ is abelian and we note that $c_2(G) \geqs 3$ if $|A|$ is even, so the hypothesis $c(G) \leqs 2$ implies that $|A|$ is odd.  
\end{proof}  

We will also need the following proposition, which handles a family of groups with a primitive permutation representation of degree $p^2$ for some prime $p$.

\begin{prop}\label{p:prim2}
Let $G = F(G){:}L$ be a primitive affine group, where $F(G) = (C_p)^2$ and $L \leqs {\rm GL}_2(p)$ is irreducible. Assume $L'$ has odd order. 
\begin{itemize}\addtolength{\itemsep}{0.2\baselineskip}
\item[{\rm (i)}] If $p=2$, then $\alpha(G)=0$ if $L=C_3$, and $\alpha(G)=1$
if $L=S_3$.
\item[{\rm (ii)}] Suppose $L$ is abelian. Then $\alpha(G)=0$ if and only if $L \cap Z({\rm GL}_2(p)) = 1$, and $\alpha(G)=1$ if and only if $p \equiv 3 \imod{4}$ and $L = C_{2(p+1)}$.
\item[{\rm (iii)}] Suppose $L$ is nonabelian and acts imprimitively on $F(G)$. Then $\alpha(G) \geqs 1$, with equality if and only if $L = D_{2r}$ and $r$ is an odd prime divisor of $p-1$.  
\item[{\rm (iv)}] Suppose $L$ is nonabelian and acts primitively on $F(G)$. Then $\alpha(G) = 0$ if and only if $L \cap Z({\rm GL}_2(p))=1$, and $\alpha(G) = 1$ if and only if $p \equiv 3 \imod{4}$, $|L|=2(p+1)$ and $Z(L) = C_2$.  
\end{itemize}
\end{prop}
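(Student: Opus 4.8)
The plan is to reduce the whole proposition to counting the $G$-conjugacy classes of core-free subgroups $H$ with $b(G,H)\geqs 3$, organising the count around the scalar subgroup $L\cap Z$, where $Z=Z({\rm GL}_2(p))$. Since $G$ is primitive affine, $F:=F(G)=(C_p)^2$ is the unique minimal normal subgroup and $L$ acts irreducibly, so a subgroup $H$ is core-free exactly when $F\not\leqs H$, and then $D:=H\cap F$ is either trivial or a line. I would first dispatch the two shapes separately. If $D=1$ then $H$ is $G$-conjugate to a subgroup $J\leqs L$, and, using $H\cap H^v=C_H(v)$ for $v\in F$, one gets $b(G,H)\geqs 3$ precisely when $J$ has no regular orbit on $F$ (the criterion already used for Corollary \ref{c:regorbit}). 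If $D$ is a line, then $H=D.J_0$ with image $J=HF/F\leqs N_L(D)$ and normal closure $F{:}\langle J^L\rangle$, so Lemma \ref{l:basic2} yields $b(G,H)\geqs 3$ whenever $|J|^2>|J^L|$; in particular this holds for every nontrivial $J\normeq L$ fixing a line.

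The engine is then the following dichotomy. If $L\cap Z\neq 1$, pick a nontrivial scalar $z$ and any line $D$: the subgroup $\langle D,z\rangle$ is core-free with normal closure $F{:}\langle z\rangle$ of order $p^2|z|$, so $|\langle D,z\rangle|^2=p^2|z|^2>p^2|z|$ and Lemma \ref{l:basic2} gives $b(G,\langle D,z\rangle)\geqs 3$, whence $\a(G)\geqs 1$. Conversely, if $L\cap Z=1$ I would show $\a(G)=0$: a nonscalar torus element fixes no nonzero vector, so (as $L'$ has odd order, whence $-1\notin L$) every subgroup $J$ with $D=1$ acts semisimply and has a regular orbit, giving $b(G,J)\leqs 2$ by Lemma \ref{l:10}(i); the only subgroups fixing a line are generated by reflections, which I would check have base size $2$ directly. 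This gives the ``$\a=0$'' assertions in (i), (ii), (iv); and since an imprimitive irreducible $L$ has $T_0:=L\cap T\neq 1$ with $T_0\normeq L$ fixing each coordinate line, the subgroup $\ell{:}T_0$ satisfies $b(G,\ell{:}T_0)\geqs 3$, yielding $\a(G)\geqs 1$ in (iii).

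For the structural split I would invoke the classification of irreducible subgroups of ${\rm GL}_2(p)$ (as in \cite[Chapter 3]{Suz}) together with the hypothesis that $L'$ has odd order: this forces a Sylow $2$-subgroup of $L$ to be abelian and excludes the exceptional primitive types (whose derived groups contain $Q_8$ or $A_5$), so $L$ normalises a split torus (the imprimitive case (iii)) or a nonsplit torus (the abelian case (ii) and the primitive case (iv)); the case $p=2$ I would settle by hand using ${\rm GL}_2(2)=S_3$ and Corollary \ref{c:regorbit}, which gives $G=S_4$ when $L=S_3$. In each remaining case I would compute $L\cap Z$ and the line-stabilisers $N_L(\ell)$, identify the $b\geqs 3$ subgroups as the ``diagonal'' subgroups $\ell{:}N_L(\ell)$ (case (iii)) or $D{:}\langle -1\rangle$ (cases (ii), (iv)), and count classes. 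The value $\a(G)=1$ arises exactly when there is a single nontrivial scalar and $L$ is transitive on the relevant family of lines, collapsing everything to one class: this forces $L=D_{2r}$ with $r$ an odd prime dividing $p-1$ in the split case, and $|L|=2(p+1)$ with $L\cap Z=Z(L)=C_2$ in the torus cases. The congruence then drops out of the arithmetic: transitivity gives $|L|=2(p+1)$, and $|L\cap Z|=\gcd(2(p+1),p-1)=\gcd(4,p-1)$ equals $2$ precisely when $p\equiv 3\imod 4$.

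I expect the main obstacle to be the bookkeeping needed to prove $\a(G)\geqs 2$ for every near-miss, and, in the primitive case, the interaction with Theorem \ref{t:evenderived}. Concretely, I must rule out extra, non-conjugate $b\geqs 3$ subgroups coming from $L$ (or a subgroup of $L$) having no regular orbit, from a scalar group $L\cap Z$ with more than one nontrivial subgroup (which forces its order to be a prime, and then $2$), and from the lines splitting into several $L$-orbits (which forces the transitivity, and hence $r$ prime in (iii)). The subtlest point is that in the primitive case the conditions ``$|L|=2(p+1)$, $Z(L)=C_2$'' force $L=D_{2(p+1)}$, whose derived subgroup $C_{(p+1)/2}$ has \emph{even} order when $p\equiv 3\imod 4$; thus under our standing hypothesis these groups are precisely the ones treated in Theorem \ref{t:evenderived}, and here I would instead verify that $L\cap Z\neq 1$ always yields $\a(G)\geqs 2$, so that the characterisation in (iv) is consistent. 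Verifying the converse ``$\a=1$'' directions then reduces to checking that the reflection-type subgroups $D{:}\langle w\rangle$ and the generic subgroups meeting $F$ trivially all have base size $2$.
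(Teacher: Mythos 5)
Your proposal is correct and follows essentially the same route as the paper: reduce via Proposition \ref{p:regorbit} (regular orbits, so subgroups meeting $F(G)$ trivially have base size $2$ for $p$ odd) to subgroups meeting $F(G)$ in a line, use the scalar subgroup $L \cap Z({\rm GL}_2(p))$ together with Lemma \ref{l:basic2} to produce and count the classes with $b(G,H) \geqs 3$, and split into the abelian/imprimitive/primitive cases via the classification of irreducible subgroups of ${\rm GL}_2(p)$ whose derived subgroup has odd order. Your closing observation that the $\alpha(G)=1$ conditions in part (iv) are unsatisfiable under the standing hypothesis that $|L'|$ is odd (such $L$ being precisely among the groups treated in Theorem \ref{t:evenderived}) is correct and consistent with the paper, which simply derives those conditions as necessary without remarking on their vacuity.
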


\begin{proof}  
The result is clear if $p=2$, so let us assume $p$ is odd and $H$ is a nontrivial core-free subgroup of $G$. If $H \cap F(G) = 1$ then $b(G,H) = 2$ since $L$ has a regular orbit on $F(G)$ by Proposition  \ref{p:regorbit}. In particular, if $b(G,H) \geqs 3$ then $H \cap F(G)$ has order $p$. 

Since $L'$ has odd order, it follows that $|L|$ is indivisible by $p$.  In particular, $H^1(L,F(G))=0$ and thus any two complements of $F(G)$ in $G$ are conjugate. Let $Z = L \cap Z({\rm GL}_2(p)) \leqs C_{p-1}$ be the group of scalars in $L$.

First assume $L$ is abelian and let $D$ be a subgroup of order $p$. Then $b(G,D) = 2$ and $N_L(D) = Z$. If $Z \ne 1$ then $b(G,H) \geqs 3$ for any subgroup of the form $H = D{:}Y$ with $1 \ne Y \leqs Z$, whence $\a(G) \geqs 2$ if $Z$ has composite order. On the other hand, if $Z = 1$ then $\a(G) = 0$. Now assume $Z$ has prime order, so $\a(G) \geqs 1$. Clearly, $\alpha(G)=1$ only if $L/Z$ acts regularly on the set of order $p$ subgroups of $F(G)$, in which case $|L|=(p+1)|Z|$ is even.  Since any involution in $L$ fixes a line, this implies that $|Z|=2$ and $L$ is cyclic of order $2(p+1)$. In addition, since $Z$ is the stabilizer of any line, this forces $p \equiv 3 \imod{4}$ and we conclude that (ii) holds. 

For the remainder we may assume $L$ is nonabelian. We divide the analysis according to whether or not $L$ acts imprimitively (as a linear group) on $F(G)$.

First assume $L$ acts imprimitively on $F(G)$. Then $L$
has a normal abelian subgroup $L_0 \leqs (C_{p-1})^2$ of index $2$. If $K \leqs L_0$ is any nontrivial normal subgroup of $L$, then clearly $b(G,DK) \geqs 3$, where $D = C_p$ is one of the two lines in $F(G)$ fixed by $L_0$. This implies that $\a(G) \geqs 1$, with equality only if $L_0$ contains no proper nontrivial normal subgroups of $L$. This observation, together with the fact that $L'$ has odd order, implies that $L_0 = C_r$ for some odd prime $r$ and we deduce that (iii) holds. Here it is straightforward to see that the only core-free subgroups $H$ of $G$ with $b(G,H) \geqs 3$ are of the form $D{:}L_0$, where $D = C_p$ is one of the two lines fixed by $L_0$.   

Now let us assume $L$ is nonabelian and primitive on $F(G)$. Since $L'$ has odd order, this implies that $L$ is contained in the normalizer of a nonsplit torus in ${\rm GL}_2(p)$ and $L$ has an abelian subgroup $L_0 \leqs C_{p^2-1}$ of index $2$. 

If $Z=1$, then $L_0$ has odd order and no element of $L_0$ fixes a line, which implies that $L$ is a dihedral group and $|L|$ divides $p+1$. Therefore, the stabilizer of a line in $L$ has order at most $2$. Since the involutions in $L$ are self-centralizing, we deduce that the stabilizer of a pair of conjugate lines is trivial and thus $b(G,H) =2$ for every nontrivial core-free subgroup $H$ of $G$. In particular, we conclude that $\a(G)=0$.   

Now assume $Z \ne 1$, so $\a(G) \geqs 1$. Let us assume $\a(G) = 1$. Then $Z$ has prime order and $L/Z$ acts regularly on the set of lines of $F(G)$, so $|L/Z| = p+1$. We also deduce that $D{:}Z$ is a maximal core-free subgroup of $G$ for any subgroup $D$ of order $p$, so $Z$ must be the stabilizer of every line in $F(G)$. Since $L$ has even order, this implies that $|Z|=2$ and $|L|=2(p+1)$. Moreover, every involution in $L$ must be in $Z$ and no element in $L$ of order $4$ fixes a  line. Therefore, the Sylow $2$-subgroup of $L$ is cyclic of order $4$
and thus $p \equiv 3 \imod{4}$ as in (iv).  
\end{proof} 

\begin{proof}[Proof of Corollary \ref{c:prim}]
Let $G$ be a finite group with socle $N$ and a core-free maximal subgroup. Then $N = S^d$ for some simple group $S$ and positive integer $d$. If $S$ is nonabelian, then Theorem \ref{t:main1} implies that $\a(G) \geqs 1$, with equality if and only if $G = N = S$ and $\a(S)=1$, so we may assume $S = C_p$ is abelian. Then $N = (C_p)^d = F(G)$ and $G = F(G){:}L \leqs {\rm AGL}_d(p)$, where $F(G)$ is the unique minimal normal subgroup of $G$. 

If $d=1$ then $c(G)=1$ and thus $\a(G) = 0$ by Theorem \ref{t:c1}. On the other hand, if $d \geqs 5$ then $c(G) \geqs 5$ and thus $\a(G) \geqs 2$ by Theorem \ref{t:c5}. Now assume $d \in \{2,3,4\}$. If $d=4$ then Theorem \ref{t:c4} implies that $\a(G) \geqs 2$, while for $d=3$ we can appeal to Theorem \ref{t:c3}, which shows that $\a(G) = 1$ if and only if $L = C_{p^2+p+1}$ and $p \not\equiv 1 \imod{3}$. 

Finally, let us assume $d=2$, in which case $r(G) = c(G) = 2$. If $L'$ has even order, then Theorem \ref{t:evenderived} implies that $\a(G) \geqs 1$, with equality if and only if $p \equiv 3 \imod{4}$ and either $L = 2.D_{p+1}$ has a unique involution, or $(p,L) = (11,{\rm SL}_2(3))$, $(23,2.S_4)$ or $(59, {\rm SL}_2(5))$. These cases are all included in part (iii)(d) of Corollary \ref{c:prim}. Finally, if $L'$ has odd order then we apply Proposition \ref{p:prim2}.
\end{proof}

\section{$E(G) = 1$, $c(G) = 2$} \label{s:cle2} 

In this final section we consider the strongly base-two groups with $\Phi(G) = E(G) = 1$ and $c(G)=2$. Recall that $r(G)$ denotes the maximal rank of a minimal normal subgroup of $G$. Since $r(G) \leqs c(G)$, we see that every minimal normal subgroup of $G$ has rank $1$ or $2$. As usual, write $G = F(G){:}L$ as in \eqref{e:FGL}. 

\begin{rem}
Recall that if $c(G) \geqs 3$, then one can always find a core-free subgroup $H$ of $F(G)$ with $b(G,H) \geqs 3$. But this need not be the case when $c(G) = 2$. Indeed, we may have $b(G,H) = 2$ for every nontrivial core-free subgroup $H$, which means that $\a(G)=0$. 
\end{rem}

We have already handled several special cases in this setting. For instance, the nonsolvable strongly base-two groups were determined in Corollary \ref{c:ns}, while the examples with a core-free maximal subgroup are recorded in Corollary \ref{c:prim}. The latter result shows that we can find groups with $c(G) = 2$ and $\a(G) = 0$ or $1$, and further examples can be constructed by combining these groups. Let us also observe that if $L'$ has even order, then $r(G)=2$ and Theorem \ref{t:evenderived} classifies the groups with $\a(G) \leqs 1$. One can also construct examples where $|L'|$ is odd and $\a(G)=1$, analogous to those in Theorem \ref{t:evenderived}.

For the remainder, let us assume $|L'|$ is odd and let $O(L)$ be the largest odd order normal subgroup of $L$. Then $O(L)$ is abelian and $L = O(L){:}S$ for some abelian $2$-group $S$. Note that the condition $c(G)=2$ implies that if $G$ has a minimal normal subgroup $P = (C_p)^2$ on which $G$ acts as a nonabelian group, then $P = O_p(G)$. 

In view of Corollary \ref{c:regorbit}, we may assume that $L$ has a regular orbit on $F(G)$. As a consequence, it is straightforward to show that there are no core-free subgroups $H$ of $G$ such that $b(G,H) \geqs 3$ and $H \cap F(G)=1$.  

The following result gives necessary and sufficient conditions for $\a(G)=0$. In view of Theorem \ref{t:evenderived}, note that $\a(G)=0$ only if the derived subgroup of $L$ has odd order. 
 
\begin{thm}\label{t:a=0} 
Suppose $E(G)=1$, $c(G)=2$ and $L'$ has odd order. Then $\alpha(G) = 0$ if and  only if all of the following conditions are satisfied:
\begin{itemize}\addtolength{\itemsep}{0.2\baselineskip} 
\item[{\rm (i)}] $L$ has a regular orbit on $F(G)$.
\item[{\rm(ii)}]  If $H$ is a core-free subgroup of $F(G)$, then $H \cap H^x =1 $ for some $x \in L$.
\item[{\rm (iii)}] $L$ does not contain a subgroup $J = \la y \ra$ of prime order such that $F(G) = V \times C_{F(G)}(y)$, where 
\[
V = [y, F(G)] = V_1 \times \cdots \times V_k
\]
and the following conditions are satisfied:

\vspace{1mm}

\begin{itemize}\addtolength{\itemsep}{0.2\baselineskip} 
\item[{\rm (a)}] Each $V_i = (C_{p_i})^2$ is a cyclic $G$-module.
\item[{\rm (b)}] $y$ acts as a nontrivial scalar on each $V_i$, unless
$V_i$ is imprimitive as a $G$-module and $y$ acts nontrivially on $V_i$, stabilizing  each of the two lines permuted by $G$. 
\end{itemize}
\item[{\rm(iv)}] $L$ does not contain a subgroup $J = S_3$ such that 
\[
V=[J,F(G)] = O_2(G) \times U,
\]
where $U = U_1 \times \cdots \times U_k$ with each $U_i = (C_{p_i})^2$ a cyclic $G$-module, and $J$ acts faithfully on $O_2(G) = (C_2)^2$ and by inversion on $U$.
\end{itemize}
\end{thm}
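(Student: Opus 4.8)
The plan is to classify every core-free subgroup $H$ of $G$ by the intersection $D = H \cap F(G)$ and to show that $\a(G)=0$ is equivalent to $b(G,H)=2$ for all such $H$. The key structural facts are that $\Phi(G)=1$ makes $F(G)$ abelian, so for $g = fl$ with $f \in F(G)$ and $l \in L$ one has $D^g = D^l$, and hence $H \cap H^g \cap F(G) = D \cap D^l$ depends only on the image $l$ of $g$ in $L$. When $D=1$ the regular-orbit reduction preceding the theorem (via Corollary \ref{c:regorbit}) gives $b(G,H)=2$ exactly under condition (i), while failure of (i) yields $b(G,L)\geq 3$. When $H \leq F(G)$, the displayed identity shows $b(G,H)=2$ if and only if $D \cap D^x = 1$ for some $x \in L$, which is precisely condition (ii). This settles the cases $D \in \{1,H\}$ and reduces the theorem to core-free $H$ with $1 \neq D < H$.

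For these remaining subgroups I will prove the equivalence ``$G$ has a core-free $H$ with $1\neq D<H$ and $b(G,H)\geq 3$'' if and only if ``$L$ contains a configuration as in (iii) or (iv)''; conditions (iii) and (iv) are then the negation. The reverse implication is by explicit construction in the spirit of Theorem \ref{t:evenderived}. Given $J=\langle y\rangle$ as in (iii), the scalar (or line-stabilising) action of $y$ on each cyclic rank-two module $V_i$ lets me pick a $y$-invariant diagonal core-free subgroup $D<V$ with $|D|^2=|V|$; a direct analysis of $H\cap H^g$ for $H=\langle D,y\rangle$, using that $y$ acts fixed-point-freely and as a scalar on the $G$-submodule $V=V_1\times\cdots\times V_k$, shows $b(G,H)\geq 3$. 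For (iv) I take $D<U$ a diagonal of the odd part (disjoint from $O_2(G)$) and $H=D{:}S_3$; here the normal closure is $N=(O_2(G)\times U){:}S_3$ and the arithmetic $|S_3|^2=36>24=|S_3|\cdot|O_2(G)|$ gives $|H|^2>|N|$, so $b(G,H)\geq 3$ by Lemma \ref{l:basic2}. The imprimitive caveat in (iii)(b) and the faithful-plus-inversion split in (iv) are exactly what keeps $D$ core-free while retaining enough of $J$ to force the base size up.

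The substantive direction, and the main obstacle, is the forward implication: assuming no configuration (iii) or (iv) is present, I must separate an arbitrary core-free $H$ with $1\neq D<H$ from some conjugate. Writing $\bar J=HF(G)/F(G)\leq L$ and $H=D{:}J_0$ after choosing a complement, the strategy, as in Theorems \ref{t:c3} and \ref{t:c4}, is to decompose $F(G)=[J_0,F(G)]\times C_{F(G)}(J_0)$, use the regular orbit of the complement on a suitable summand to kill the $J_0$-projection by means of the $F(G)$-component $f$ of the conjugating element, and to choose the $L$-component $l$ so that $D\cap D^l=1$; the identity above then gives $H^{g}\cap H=1$. The crux is to show that this two-step separation can fail only when a prime-order element, or an $S_3$-section, of $J_0$ acts by scalars, respectively in the exceptional $\mathrm{GL}_2(2)\cong S_3$ fashion, on a product of rank-two cyclic $G$-modules --- that is, only when (iii) or (iv) holds. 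Here the hypothesis $c(G)=2$ is indispensable: it confines every cyclic $G$-submodule of $F(G)$ to rank at most two, so a residual obstruction must localise on pieces $V_i=(C_{p_i})^2$, and the obstructing section is forced to act by scalars when $p_i$ is odd and through $S_3$ when $p_i=2$ (where $\mathbb{F}_2$ admits no nontrivial scalars). I expect the delicate bookkeeping to split according to whether $|\bar J|$ is coprime to the primes dividing the order of the normal closure of $D$ --- using $H^1(J_0,F(G))=0$ in the coprime case to normalise $H$ and reduce cleanly to the regular-orbit separation --- and the hardest point to be the exclusion of obstructions distributed simultaneously across several primes, which the direct-product forms of $V$ in (iii) and (iv) are designed to encode.
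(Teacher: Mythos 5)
Your overall architecture (classifying core-free $H$ by $D=H\cap F(G)$, with condition (i) handling $D=1$, condition (ii) handling $H\leq F(G)$, and (iii)/(iv) encoding the obstructions in the mixed case) agrees with the paper's proof, but there is a genuine gap in your construction for the necessity of condition (iii). You take $H=\langle D,y\rangle$ with $D$ a $y$-invariant product of lines and claim a direct analysis gives $b(G,H)\geq 3$; this subgroup is too small in the imprimitive case (iii)(b), where your claimed analysis (which assumes $y$ acts as a scalar) does not apply. Concretely, let $F(G)=(C_{11})^2$, let $y=\mathrm{diag}(3,9)\in{\rm GL}_2(11)$ (an element of order $5$, fixed point free, stabilizing the coordinate lines $\langle e_1\rangle,\langle e_2\rangle$), let $\sigma$ be the coordinate swap and $L=\langle y,\sigma\rangle\cong C_5\wr C_2$, acting irreducibly and imprimitively. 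Then $E(G)=\Phi(G)=1$, $c(G)=2$, $L'\cong C_5$ has odd order, and configuration (iii) is present, so the theorem asserts $\alpha(G)\geq 1$. But your witness $H=\langle e_1\rangle{:}\langle y\rangle$ satisfies $H\cap H^{\sigma}=1$: since $y^{\sigma}=\mathrm{diag}(9,3)$ and the exponent vectors $(1,2)$, $(2,1)$ span distinct cyclic subgroups of $(C_5)^2$ (as $2^2\not\equiv 1 \imod{5}$), we get $\langle y\rangle\cap\langle y^{\sigma}\rangle=1$, so $H\cap H^{\sigma}\leq F(G)$ and hence $H\cap H^{\sigma}=\langle e_1\rangle\cap\langle e_2\rangle=1$. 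Thus $b(G,H)=2$ and the construction fails. The paper's proof avoids exactly this: it takes $Y$ to be the normal closure of $\langle y\rangle$ in $L$ (here $Y=(C_5)^2$), checks that every element of $Y$ still acts as a scalar on each $V_i$ or stabilizes both permuted lines, so $Y$ normalizes $U$, and applies Lemma \ref{l:basic2} to $H=UY$, whose normal closure is $VY$ and which satisfies $|H|^2=|V||Y|^2>|V||Y|$. Conjugation can rotate $\langle y\rangle$ off itself, but not a normal subgroup of $L$; that saturation is the essential idea your proposal is missing.

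Separately, the direction you yourself call substantive --- that conditions (i)--(iv) force $\alpha(G)=0$ --- is only an outline, with the decisive step explicitly deferred (``I expect the delicate bookkeeping\dots''). Your intended two-step separation does match the paper's argument: there one chooses $x\in L$ via (ii) with $X_i^x\neq X_i$ for all $i$, shows that any element of $H\cap H^x$ must act as a scalar on each $V_i$ or stabilize the two permuted lines in the imprimitive case (a caveat your sketch omits in this direction), and then splits according to whether $H/C_H(W)$ has a regular orbit on the complement $W$, invoking Proposition \ref{p:regorbit} to produce $w$ or $u$ with $H^{xw}\cap H$ (respectively $H^{xu}\cap H$) of the exact shape forbidden by (iii) (respectively (iv)). As submitted, the argument that the residual obstruction must localize into one of these two normal forms --- the heart of the theorem --- is not carried out, so neither implication of the equivalence is actually proved.
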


\begin{proof}  The necessity of (i) is clear (otherwise,  $b(G,L) \geqs 3$). The condition in (ii) is equivalent to the property that $b(G,H) \leqs 2$ for every core-free subgroup $H$ of $G$ contained in $F(G)$. 

Let us assume there exists an element $y \in L$ satisfying (iii).  
Let $U_i = C_{p_i}$ be a $y$-invariant subgroup of $V_i$. Note that if $y$ acts as a scalar on $V_i$, then every subgroup of $V_i$ is $y$-invariant, otherwise there are exactly two choices for $U_i$ (corresponding to the two lines in $V_i$ permuted by $G$). Set $U = U_1 \times \cdots \times U_k$ and let $Y$ be the normal closure of $\la y \ra$ as a subgroup of $L$. Note that each nontrivial element of $Y$ either acts as a scalar on each $V_i$, or it preserves the two lines in $V_i$ permuted by $G$ and acts trivially on $C_{F(G)}(y)$. Therefore, 
$Y$ normalizes $U$ and we deduce that $VY$ is the normal closure of $H = UY$ in $G$. Since $|H|^2 > |VY|$ we conclude that $b(G,H) \geqs 3$ and thus $\a(G) \geqs 1$.

Now suppose that there exists a subgroup $J$ as in (iv). Let $W_i \leqs U_i$ be a subgroup of order $p_i$ that is not normal in $G$ and set $W = W_1 \times \cdots \times W_k$ and $H = WJ$, noting that $VJ$ is the normal closure of $H$. Then
$|H|^2 = 36|U| > 24|U| = |VJ|$ and thus $b(G,H) \geqs 3$.  

We have now shown that if $\a(G)=0$ then all of the conditions in (i)-(iv) are satisfied. So it remains to show that the converse holds. To do this, let us assume (i)-(iv) holds and suppose $\a(G) \geqs 1$. Let $H$ be a maximal core-free subgroup of
$G$ with $b(G,H) \geqs 3$ and set $X = H \cap F(G)$, noting that $X$ is a nontrivial proper subgroup of $H$ by conditions (i) and (ii). Let $V$ be the normal closure of $X$ and write 
$V = V_1 \times \cdots \times V_k$ and $X = X_1 \times \cdots \times X_k$, where $V_i = (C_{p_i})^2$ for some prime $p_i$ and $X_i = X \cap V_i = C_{p_i}$ is normalized by $H$.

Suppose $p_i$ is odd. Then since the group induced by the action of $L$ on $V_i$ has a derived subgroup of odd order, it follows that every element in $L$ of order $p_i$ acts trivially on $V_i$. In particular, $H$ acts completely reducibly on $V_i$.     And for $p_i=2$ we note that $|V_i|=4$ and $|H/C_H(V_i)| \leqs 2$.  

Next observe that if $V_i$ (as an $L$-module) is either reducible, or irreducible and primitive, then the intersection of the stabilizers of distinct $L$-conjugate lines in $V_i$ consists of scalars (indeed, if the action of $L$ on $V_i$ is abelian, then the stabilizer of any line that is not $L$-invariant just consists of scalars). On the other hand, if $V_i$ is irreducible and imprimitive, then either the pair of conjugate lines form an $L$-orbit, or the intersection of the stabilizers consists of scalars. By (ii), we can choose $x \in L$ so
that $X_i^x \ne X_i$ for all $i$, which implies that $H \cap H^x \cap F(G)=1$. In addition, the above remarks show that if $h \in H \cap H^x$, then either $h$ acts as a scalar on $V_i$, or $V_i$ is imprimitive as an $L$-module and $h$ stabilizes each of the two lines permuted by $L$. If we now write $F(G) = V \times W$, where $W$ is a normal subgroup of $G$, then there are two cases to consider.  

First assume that $H/C_H(W)$  has a regular orbit on $W$, which means that $C_H(w) = C_H(W)$ for some $w \in W$. Then $H^{xw} \cap H \cap F(G) = 1$ and each element in $H^{xw} \cap H$ centralizes $W$ and acts as a scalar on each $V_i$. By (iii), it follows that $H^{xw} \cap H = 1$, a contradiction.

Now suppose that $H/C_H(W)$ does not have a regular orbit on $W$.  By Proposition \ref{p:regorbit}, this implies that $W = O_2(G) \times U$ and $H/C_H(V \times U)$ acts on $O_2(G)$ as $S_3$.  By a further application of Proposition \ref{p:regorbit}, it follows that there exists $u \in U$ such that
$C_H(u)$ centralizes $U$ (that is, $H/C_H(U)$ has a regular orbit on $U$). Then by arguing as in the previous paragraph, we see that $H^{xu} \cap H$ acts as scalars on each $V_i$ and contains an $S_3$ subgroup whose elements of order $3$ centralize $V$. This contradicts (iv) and completes the proof. 
\end{proof} 

\begin{rem}\label{r:J}   
Let us note that we may modify conditions (iii) and (iv) in Theorem \ref{t:a=0} so that the subgroup $J$ is normal in $L$. This is clear in case (iv) and indeed we note that $L =  JC_L(J)$. Now consider (iii). If $y$ acts as a scalar on each 
$V_i$, then $y$ is central in $L$ and thus $J = \la y \ra$ is normal. So let us assume  $y$ is not central in $L$. Then $\la y' \ra$ satisfies the conditions in (iii) for any element $y' \in [y,L] \leqs  L'  \leqs O(L)$ of prime order and either $y'$ acts trivially on $V_i$, or $V_i$ is imprimitive as a $G$-module. Let $Y$ be the normal closure of $\la y' \ra$ in $L$. If $x \in L$, then $x^2$ centralizes $Y$ and so $L$ acts on $Y$ as an elementary abelian $2$-group. In particular, $Y$ is a completely reducible $L$-module with $1$-dimensional summands and therefore we can choose $y'$ so that $\langle y' \rangle$ is normal in $L$.  
\end{rem} 

By inspecting the proof of Theorem \ref{t:a=0}, together with the relevant proofs in Sections \ref{s:t3} and \ref{s:c4}, we obtain the following characterization of the groups with $\a(G) \geqs 1$.

\begin{thm}\label{t:ql}  
Let $G$ be a finite group with $\Phi(G) = 1$.  Then $\a(G) \geqs 1$ if and only if
there exists a core-free subgroup $H$ of $G$ with $|H|^2 \geqs |N|$ and $b(G,H) \geqs 3$,  where $N$ is the normal closure of $H$ in $G$. 
\end{thm}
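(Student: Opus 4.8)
The plan is to prove the easy implication directly and to obtain the substantive one by revisiting, case by case, the witnesses already constructed in Sections~\ref{s:t3}--\ref{s:cle2}, checking that each may be taken to satisfy the size bound $|H|^2 \geqs |N|$. The implication $(\Leftarrow)$ is immediate: if $G$ has a core-free subgroup $H$ with $b(G,H) \geqs 3$, then $\a(G) \geqs 1$ by Definition~\ref{d:sb2}, and the hypothesis $|H|^2 \geqs |N|$ plays no role. So the content lies in $(\Rightarrow)$, where, assuming $\a(G) \geqs 1$, I must exhibit \emph{some} core-free $H$ with $b(G,H) \geqs 3$ whose normal closure $N$ satisfies $|H|^2 \geqs |N|$. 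I would organize this by the invariants $E(G)$ and $c(G)$, exactly as in the proofs of Theorems~\ref{t:main1}, \ref{t:c5}, \ref{t:c4}, \ref{t:c3}, \ref{t:c1}, \ref{t:evenderived} and \ref{t:a=0}.

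In most cases the witness is produced via Lemma~\ref{l:basic2} and satisfies the \emph{strict} inequality $|H|^2 > |N|$. When $E(G) \ne 1$, the proof of Lemma~\ref{l:1} gives a large subgroup $J$ of the simple factor $T$ (or $T^{m-1} \times J$), so that $|H|^2 > |N|$. When $E(G)=1$ and $c(G) \geqs 3$, the proofs of Theorems~\ref{t:c5}, \ref{t:c4}, \ref{t:c3} give a core-free subgroup of order $p^{c-1}$ inside a cyclic module of order $p^{c}$, whence $|H|^2 = p^{2(c-1)} > p^{c} = |N|$ as $c \geqs 3$. When $c(G)=2$ and $L'$ has even order, the witness $\langle D,x\rangle$ of Theorem~\ref{t:evenderived} satisfies $|H|^2 = 4p^2 > 2p^2 = |N|$. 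The case $c(G)=1$ is vacuous by Theorem~\ref{t:c1}. This leaves $c(G)=2$ with $|L'|$ odd, where $\a(G) \geqs 1$ forces one of conditions (i)--(iv) of Theorem~\ref{t:a=0} to fail: if (i) fails then Proposition~\ref{p:regorbit} gives $G = S_4 \times G_0$ and $S_3 < S_4$ is a witness with $|H|^2 = 36 > 24 = |N|$; and if (iii) or (iv) fails, the explicit witnesses $UY$ and $WJ$ from the proof of Theorem~\ref{t:a=0} again give $|H|^2 > |N|$.

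The sole source of equality --- and the reason the statement reads $\geqs$ rather than $>$ --- is the failure of condition (ii), where the witness lies inside $F(G)$. Here the plan is to start from a core-free $H \leqs F(G)$ with $b(G,H) \geqs 3$, which for $H$ abelian means $H \cap H^\ell \ne 1$ for all $\ell \in L$, and to reduce it to the normal form isolated in the proof of Theorem~\ref{t:a=0}: a direct product $H = X_1 \times \cdots \times X_k$ of lines $X_i = C_{p_i}$, one inside each rank-two cyclic submodule $V_i = (C_{p_i})^2$ whose product is the normal closure $N = V_1 \times \cdots \times V_k$. For such $H$ one has $|H|^2 = \prod_i p_i^2 = \prod_i |V_i| = |N|$, the desired equality. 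I expect this reduction to be the main obstacle. A general core-free $H \leqs F(G)$ may embed diagonally across several minimal normal subgroups --- for example as a single line projecting nontrivially onto three rank-one summands --- and such an inefficient diagonal has $|H|^2 < |N|$. The point to verify is that any such diagonal can be replaced by a line in a rank-two cyclic submodule (either a rank-two minimal normal subgroup, or the sum of two rank-one minimal normals with distinct characters) which meets at least as many translates $H^\ell$, so that a reduced witness of product-of-lines form still exists and attains the bound with equality. Passing to $p$-parts reduces the size estimate to the single-prime inequality $|H_p|^2 \geqs |N_p|$, which is precisely where the constraint $c(G)=2$ does the essential work.
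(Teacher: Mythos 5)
Your overall strategy is exactly the paper's: Theorem \ref{t:ql} is obtained there ``by inspecting the proof of Theorem \ref{t:a=0}, together with the relevant proofs in Sections \ref{s:t3} and \ref{s:c4}'', and your explicit witnesses in the cases $E(G) \ne 1$, $c(G) \geqs 3$, $c(G)=2$ with $|L'|$ even, and failure of conditions (i), (iii) or (iv) of Theorem \ref{t:a=0} are all correct, with the stated strict inequalities (your choice of $S_3 < S_4$ rather than $L$ itself when (i) fails is the right one, since $L$ need not satisfy the bound). However, the one case you yourself flag as ``the main obstacle'' --- a witness $H \leqs F(G)$ arising from the failure of condition (ii) --- is not actually proved: you outline a reduction of $H$ to a product-of-lines normal form, replacing diagonal subgroups while preserving $b(G,H) \geqs 3$, and explicitly leave this as ``the point to verify''. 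As written, this is a genuine gap, and it sits precisely at the equality case that forces the theorem to read $\geqs$ rather than $>$.

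The good news is that the reduction you envisage is unnecessary, and the example motivating it cannot occur. If $c(G) \leqs 2$, then \emph{every} subgroup $X$ of $F(G)$ satisfies $|X|^2 \geqs |N|$, where $N$ is the normal closure of $X$ in $G$. Indeed, write $X = \prod_p X_p$ with $X_p = X \cap O_p(G)$, so that $N = \prod_p N_p$ with $N_p$ the normal closure of $X_p$. Then $N_p$ is the sum of the cyclic $G$-submodules generated by the elements of an $\mathbb{F}_p$-basis of $X_p$, and each such cyclic submodule has dimension at most $c_p(G) \leqs 2$; hence $\dim N_p \leqs 2\dim X_p$ for every prime $p$, and the inequality follows by multiplying over all primes. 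In particular, your ``inefficient diagonal'' --- a line projecting nontrivially onto three rank-one summands --- is impossible when the three characters are pairwise distinct, since such a line would generate a cyclic module of rank $3$, contradicting $c(G)=2$; and when characters repeat, the normal closure of the line has rank at most $2$, so the bound holds anyway. Thus in the (ii)-failure case the original witness $H \leqs F(G)$ already satisfies $|H|^2 \geqs |N|$ without any modification (note that $b(G,H) \geqs 3$ does hold, since $F(G)$ is abelian, so every $G$-conjugate of $H$ is an $L$-conjugate). This one-line dimension count is the missing ingredient; with it, your case analysis is complete and coincides with the paper's intended proof.
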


The next result shows that $\a(G)=1$ only if $b(G,H) =2$ for every nontrivial core-free subgroup $H$ of $G$ contained in $F(G)$.

\begin{prop}\label{p:subgroupsofF}  
Suppose $E(G)=1$, $c(G)=2$ and $L'$ has odd order. If $F(G)$ contains a core-free subgroup $H$ of $G$ 
with $b(G,H) \geqs 3$, then $\alpha(G) \geqs 2$.
\end{prop}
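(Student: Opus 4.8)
The plan is to argue entirely in the language of $G$-modules. Write $G = F(G){:}L$ as in \eqref{e:FGL}; since $F(G)$ is abelian, every element of $F(G)$ acts trivially by conjugation on the subgroups of $F(G)$, so two subgroups of $F(G)$ are $G$-conjugate precisely when they are $L$-conjugate, and for $H \leqs F(G)$ we have $b(G,H) \geqs 3$ if and only if $H \cap H^{\ell} \ne 1$ for every $\ell \in L$. Given a core-free $H \leqs F(G)$ with $b(G,H) \geqs 3$, I would pass to its normal closure $N$, a semisimple $G$-module all of whose irreducible constituents have rank at most $2$ by the hypothesis $c(G)=2$ and onto each of which $H$ projects surjectively. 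Recall also that, by the ambient reduction (Corollary \ref{c:regorbit} and the discussion preceding this proposition), we may assume that $L$ has a regular orbit on $F(G)$.

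First I would eliminate the degenerate module types. If an isotypic component of $N$ has rank $1$, then $L$ acts on it by scalars, so every subspace there is $G$-invariant and no nontrivial subgroup of it is core-free; and a single rank-$2$ irreducible constituent contributes only lines, each of base size $2$ since $L$ is irreducible on it and hence moves any given line off itself. Thus the base-$3$ behaviour of $H$ must be supported on a richer configuration: either a rank-$2$ irreducible module occurring with multiplicity at least $2$, or a sum $V \oplus V'$ of two non-isomorphic rank-$2$ irreducibles (possibly for different primes). The possibility that no single Sylow component $H_p$ of $H=\prod_p H_p$ is itself base-$3$ must be absorbed here too: I would show that the covering sets $\{\ell \in L : H_p \cap H_p^{\ell} \ne 1\}$ cannot jointly exhaust $L$ without one of the listed rich configurations already being present, thereby reducing to a single prime $p$ and a submodule $W \leqs O_p(G)$ of shape $V \oplus V$ or $V \oplus V'$.

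With such a $W$ in hand, I would produce two non-conjugate core-free subgroups of base size $\geqs 3$ by two complementary mechanisms. When $W = V \oplus V$, every core-free hyperplane $D < W$ satisfies $|D|^2 > |W|$ and hence $b(G,D) \geqs 3$ by Lemma \ref{l:basic2}; an elementary count (for instance $p(p^2-1)$ such hyperplanes when $V$ is absolutely irreducible) together with the regular-orbit reduction shows that $L$ cannot act regularly on them, for one exhibits a nontrivial $\ell \in L$ stabilizing a core-free hyperplane, using the structure of $L$ as an irreducible subgroup of $\operatorname{GL}_2(p)$ with $L'$ of odd order to control which elements can occur. Hence $L$ has at least two orbits and $\a(G) \geqs 2$. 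When instead $W = V \oplus V'$ with $V \not\cong V'$ (or the two rank-$2$ pieces lie over different primes), I would exhibit base-$3$ subgroups of two distinct orders: a core-free hyperplane of order $p^3$ (again base $3$ via $|D|^2>|W|$) together with a base-$3$ subgroup of order $p^2$ realised as the graph of a suitably chosen $\mathbb{F}_p$-isomorphism $V \to V'$. Subgroups of different orders cannot be $G$-conjugate, so once more $\a(G) \geqs 2$.

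The main obstacle will be the module-theoretic bookkeeping that underpins this case division rather than any single hard inequality. Three points demand genuine care: rank-$2$ constituents that are irreducible but not absolutely irreducible, where $\operatorname{End}_G(V)$ is a quadratic extension of $\mathbb{F}_p$ and the hyperplane counts and graph constructions must be redone over that field; the verification that, in the $V \oplus V'$ case, a rank-$2$ base-$3$ graph really exists, i.e.\ that the defining isomorphism $\phi$ can be chosen so that $\phi - \ell\phi\ell^{-1}$ is singular for all $\ell \in L$; and the reduction from the multi-prime situation to a single prime. In every branch the decisive step is showing that $L$ fails to act transitively on the relevant family of base-$3$ subgroups, for which the hypotheses $c(G)=2$ and $|L'|$ odd are exactly what is needed to locate a non-identity element of $L$ fixing such a subgroup.
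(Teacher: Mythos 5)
Your setup (module language, reduction to a regular orbit via Corollary \ref{c:regorbit}) is fine, but the proposal has a genuine gap, and it sits precisely where the content of the proposition lies. The pivotal step in your outline --- reducing to a single prime by showing that the covering sets $\{\ell \in L : H_p \cap H_p^{\ell} \ne 1\}$ cannot exhaust $L$ unless some $O_p(G)$ already carries a ``rich configuration'' --- is not proved, and it is in fact false, because under $c(G)=2$ the rich configurations you name cannot occur at all. If $O_p(G)$ had two non-isomorphic rank-two irreducible constituents $V \not\cong V'$, then by Goursat's lemma the cyclic submodule generated by a generic element of $V \oplus V'$ is all of $V \oplus V'$, so $c_p(G) \geqslant 4$; if $V$ is absolutely irreducible and occurs with multiplicity at least two, then the enveloping algebra of $G$ on $V$ is all of $M_2(\mathbb{F}_p)$, and the cyclic submodule generated by $(v_1,v_2)$ with $v_1,v_2$ linearly independent again has rank four. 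The one multiplicity case that survives is $V \oplus V$ with $\operatorname{End}_G(V) = \mathbb{F}_{p^2}$, where $L$ acts through $\mathbb{F}_{p^2}^{\times}$, and there every core-free subgroup has base size two: if $0 \ne v \in H_p \cap \psi(\ell)H_p$ with $\psi(\ell) \notin \mathbb{F}_p$, then $v$ and $\psi(\ell)^{-1}v$ span the $\mathbb{F}_{p^2}$-line through $v$, which then lies inside $H_p$, contradicting core-freeness. Combined with the degenerate cases you do treat, this shows that \emph{every} nontrivial core-free $p$-subgroup of $F(G)$ has base size $2$ under the hypotheses; moreover, each covering set above is a \emph{proper subgroup} of $L$ (the kernel of a ratio of two characters, a line stabilizer, or $\psi^{-1}(\mathbb{F}_p^{\times})$), so the hypothesis of the proposition is realized only when at least three primes contribute and these proper subgroups cover $L$. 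The multi-prime covering situation is therefore not an edge case to be ``absorbed''; it is the only case, and your proposal contains no argument for it. (Your two exhibited mechanisms also collapse: core-free ``hyperplanes of order $p^3$'' and graphs of $\mathbb{F}_p$-isomorphisms $V \to V'$ make no sense when the two rank-two pieces lie over different primes, which is the only way non-isomorphic pieces can coexist; and the count ``$p(p^2-1)$ hyperplanes when $V$ is absolutely irreducible'' refers to a configuration excluded by $c(G)=2$.)

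The paper's proof does not attempt to rule out the covering configuration; it turns the hypothesis $\alpha(G)=1$ against it. Writing the normal closure of $H$ as $V_0 \times V_1 \times \cdots \times V_k$ with $W_i = V_i \cap H$, it constructs (using $|L'|$ odd, via eigenspaces, imprimitivity, or another line with the same stabilizer, with a separate argument when $L$ induces $S_3$ on $V_0$) a companion subgroup $U = U_0 \times \cdots \times U_k$ with $U_i \ne W_i$ but $N_L(U_i) = N_L(W_i)$, so that $U$ carries the same covering data and hence $b(G,U) = b(G,H) \geqslant 3$. The assumption $\alpha(G)=1$ then forces $U = H^x$ for some $x \in L$, which gives $W_i \cap W_i^x = 1$ for every $i$, hence $H \cap H^x = 1$, contradicting $b(G,H) \geqslant 3$. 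This ``same normalizer data plus forced conjugacy'' device is the idea missing from your outline; without it, or some genuine analysis of coverings of $L$ by the proper subgroups listed above, the proof does not go through.
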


\begin{proof}   
Seeking a contradiction, let us assume $\alpha(G)=1$ so $H$ is a maximal core-free subgroup of $G$. In particular, $N_L(H)=1$ since $HN_L(H)$ is also core-free. Also note that if $K$ is a proper subgroup of $H$, then $K \cap K^x = 1$ for some $x \in L$. 

Let $V \leqs F(G)$ be the normal closure of $H$ in $G$ and write $V=V_0 \times V_1 \times \cdots \times V_k$, where $|V_0| \in \{1,4\}$ and for $i \geqs 1$ we have $V_i = (C_{p_i})^2$ for some odd prime $p_i$. Set $W_i = V_i \cap H$ and let $K_i$ be the group induced by the action of $N_L(W_i)$ on $W_i$. In addition, let $J \leqs S_3$ be the group induced by the action of $L$ on $V_0$. There are two cases to consider.

First assume that $J \ne S_3$. We claim that for each $i \geqs 0$, there exists a proper nontrivial subgroup $U_i$ of $V_i$ with $U_i \ne W_i$ and $N_L(U_i)=N_L(W_i)$. If $|K_i| \geqs 3$ then $L$ acts imprimitively on $V_i$ and the claim clearly holds in this case. For $|K_i|=2$ we can take $U_i$ to be the other eigenspace of $K_i$ on $V_i$, and if $K_i$ is trivial then we can choose 
any line $U_i \ne W_i$ in the $L$-orbit of $W_i$ in $V_i$. This justifies the claim and we set $U = U_0 \times \cdots \times U_k$. Since $N_L(U_i) = N_L(W_i)$ for all $i$, it follows that $b(G,U)=b(G,H)$. Therefore, the condition $\a(G) = 1$ implies that  $H$ and $U$ are $L$-conjugate and so there is some element $x \in L$ with $U_i=W_i^x$ for all $i$. But this gives $H \cap H^x =1$, a contradiction.

Finally, suppose $J = S_3$ and fix $y \in L'$ acting as an element of order $3$ on $V_0$. Set $U_0=W_0$. As in the previous case, for $i \geqs 1$ we choose $U_i$ to be a line in $V_i$ such that $N_L(U_i) = N_L(W_i)$. In addition, we claim that we can choose the lines so that $U_i \ne W_i^y$ for all $i \geqs 1$. This is clear if $y$ acts trivially on $V_i$, and more generally if $y$ stabilizes $W_i$. If $|K_i|=2$,  then $y$ centralizes $K_i$ and therefore preserves each of the two eigenspaces of $K_i$ on $V_i$. And if  $K_i=1$, then we take $U_i = W_i^{y^{-1}} \ne W_i^y$.  

Let  $U = U_0 \times \cdots \times U_k$. Since $N_L(U_i) = N_L(W_i)$ for all $i$, it follows that $b(G,U) = b(G,H)$ and so $U = H^x$ for some $x \in L$. Therefore, $W_i^x = U_i$ for all $i \geqs 1$ and thus $x$ must normalize $W_0$.
Setting $z = xy^{-1}$, we have $W_0^z = W_0^{y^{-1}} \ne W_0$ and also $W_i^z  = U_i^{y^{-1}} \ne W_i$ for $i \geqs 1$. This implies that $H \cap H^z=1$ and once again we have reached a contradiction. 
\end{proof}

To conclude, let us briefly explain how one can use Theorem \ref{t:a=0} and Proposition \ref{p:subgroupsofF} to analyze the groups with $E(G)=\Phi(G)=1$, $c(G)=2$, $L'$ of odd order and $\a(G)=1$. 

First observe that if $H$ represents the unique conjugacy class of core-free subgroups of $G$ with $b(G,H) \geqs 3$, then $H$ must intersect $F(G)$ nontrivially, but it cannot be contained in $F(G)$. Furthermore, there exists a normal subgroup $J$ of $L$, as  described in parts (iii) or (iv) of Theorem \ref{t:a=0} (see Remark \ref{r:J}). Then $H=AJ$, where $A = C_{p_1} \times \cdots \times C_{p_k}$ is as described in the proof of the theorem.

Now $N_L(A) = J$ (since $AN_L(A)$ is core-free and $b(G,AN_L(A)) \geqs 3$) and every subgroup $C$ of $C_{F(G)}(J)$ must be normal in $G$ (otherwise $H \times C$ is core-free and $b(G,H \times C) \geqs 3$). Moreover, $L$ must act transitively on the set of subgroups $B \leqs [J,F(G)]$ with the property that $B$ is normalized by $J$ and the subgroups $BJ$ and $AJ$ have the same normal closure in $G$. In particular, $L/J$ acts simply transitively on the set of such subgroups. These conditions essentially describe all the groups with $\a(G)=1$ in this setting, imposing various conditions on the cyclic summands of $[J,F(G)]$. 

For example, if $J = \la y \ra$ has the properties described in part (iii) of Theorem \ref{t:a=0} and $F(G)$ has a subgroup $W = (C_p)^2$ that is imprimitive as an $L$-module,  then $F(G) = W \times C_{F(G)}(y)$ and every subgroup of $C_{F(G)}(y)$ is normal in $G$. Therefore, in case (iii) we may assume that $y$ acts as a scalar on each $V_i$ and $G$ has no imprimitive minimal normal subgroups. We leave the task of describing the precise structure of the groups with $\a(G)=1$ to the interested reader.

\end{document}